\numberwithin{equation}{section}
\theoremstyle{plain}
\newtheorem{theorem}[subsection]{Theorem}
\newtheorem{lemma}[subsection]{Lemma}
\newtheorem{corollary}[subsection]{Corollary}
\newtheorem{proposition}[subsection]{Proposition}
\newtheorem{conjecture}[subsection]{Conjecture}
\theoremstyle{definition}
\newtheorem{definition}[subsection]{Definition}
\newtheorem{example}[subsection]{Example}
\theoremstyle{remark}
\newtheorem{remark}[subsection]{Remark}
\newcommand{\ind}{\operatorname{ind}}
\newcommand{\id}{\operatorname{id}}
\newcommand{\dom}{\operatorname{dom}}
\newcommand{\loc}{\operatorname{loc}}
\newcommand{\cc}{{cc}}
\newcommand{\dm}{\partial M}
\newcommand{\CC}{\mathbb{C}}
\newcommand{\RR}{\mathbb{R}}
\newcommand{\cRR}{\mathscr{R}}
\newcommand{\ZZ}{\mathbb{Z}}
\newcommand{\AAA}{\mathcal{A}}
\newcommand{\DD}{\mathcal{D}}
\newcommand{\EE}{\mathscr{E}}
\newcommand{\cHH}{\check{H}}
\newcommand{\hHH}{\hat{H}}
\newcommand{\bfu}{\mathbf{u}}
\newcommand{\bfv}{\mathbf{v}}
\newcommand{\upper}{\uppercase\expandafter}
\newcommand{\n}{\nabla}
\newcommand{\p}{\partial}
\newcommand{\pM}{{\p M}}
\newcommand{\Hloc}{H_{\loc}}
\newcommand{\supp}{\operatorname{supp}}
\newcommand{\End}{\operatorname{End}}
\newcommand{\ad}{{\rm ad}}
\newcommand{\Tr}{\operatorname{Tr}}
\newcommand{\oB}{\bar{B}}
\newcommand{\spf}{\operatorname{sf}}
\newcommand{\PP}{\mathbb{P}}
\renewcommand{\AA}{\mathbb{A}}
\newcommand{\IM}{\operatorname{im}}
\newcommand{\ou}{\bar{u}}
\newcommand{\tilM}{\tilde M}
\newcommand{\tilE}{\tilde E}
\newcommand{\tilv}{\tilde v}
\newcommand{\tilu}{\tilde u}
\newcommand{\tilw}{\tilde w}
\begin{document}

\title[APS index with non-compact boundary]{The Atiyah--Patodi--Singer index on manifolds with non-compact boundary}


\author{Maxim Braverman${}^\dag$}
\address{Department of Mathematics,
Northeastern University,
Boston, MA 02115,
USA}

\email{maximbraverman@neu.edu}
\urladdr{www.math.neu.edu/~braverman/}

\author{Pengshuai Shi}
\address{Beijing International Center for Mathematical Research (BICMR), 
Peking University, 
Beijing 100871, China}

\email{pengshuai.shi@gmail.com}

\subjclass[2010]{58J28, 58J30, 58J32, 19K56}
\keywords{Callias, Atiyah--Patodi--Singer, index, eta, boundary value problem, relative eta}
\thanks{${}^\dag$†Partially supported by the Simons Foundation collaboration grant \#G00005104.}


\begin{abstract}
We study the index of  the APS boundary value problem for a strongly Callias-type operator $\DD$ on a complete Riemannian manifold $M$. We show that this index is equal to an index on a simpler manifold whose boundary is a disjoint union of two complete manifolds $N_0$ and $N_1$.  If the dimension of $M$ is odd we show that the latter index depends only on the  restrictions $\AAA_0$ and $\AAA_1$ of $\DD$ to $N_0$ and $N_1$ and thus is an invariant of the boundary. We use this invariant to define the relative $\eta$-invariant $\eta(\AAA_1,\AAA_0)$. We show that even though in our situation the $\eta$-invariants of $\AAA_1$ and $\AAA_0$ are not defined, the relative $\eta$-invariant behaves as if it were the difference $\eta(\AAA_1)-\eta(\AAA_0)$. 
\end{abstract}

\maketitle

\setcounter{tocdepth}{1}
\tableofcontents

\section{Introduction}\label{S:intro}

A Callias-type operator on a complete Riemannian manifold is an operator of the form $\DD= D+i\Phi$ where $D$ is a Dirac operator and $\Phi$ is a self-adjoint potential which commutes with the Clifford multiplication and satisfies certain growth conditions at infinity, so that $\DD$ is Fredholm. By the celebrated Callias-type index theorem, proven in different generalities in  \cites{Callias78,BottSeeley78,BruningMoscovici,Anghel93Callias,Bunke95}, the index of a Callias-type operator on a complete {\em odd-dimensional} manifold is equal to the index of a certain operator induced by $\Phi$ on a compact hypersurface. Several generalizations and applications of the Callias-type index theorem were obtained recently in \cites{Kottke11,CarvalhoNistor14,Wimmer14,Kottke15,BrShi16,BrCecchini17}.

B\"ar and Ballmann, \cites{BaerBallmann12,BaerBallmann13}, showed that an elliptic  boundary value problem for  a Callias-type   operator  on a complete manifold with {\em compact} boundary is Fredholm and studied its index.  P.~Shi, \cite{Shi17}, proved a version of the Callias-type index theorem for the Atiyah--Patodi--Singer (APS)  boundary value problem for Callias-type operators on a complete manifold with compact boundary. 

The study of Callias-type operators on manifolds  with non-compact boundary was initiated by Fox and Haskell \cites{FoxHaskell03,FoxHaskell05}. Under rather strong conditions on the manifold and the operator $\DD$ they showed that the heat kernel of $\DD^*\DD$ has a nice asymptotic expansion and proved a version of the Atiyah--Patodi--Singer index theorem in this situation. 

The purpose of this paper is to study the index of the APS boundary value problem on an arbitrary complete {\em odd-dimensional} manifold $M$ with non-compact boundary without introducing any extra assumptions on manifold (in particular, we do not assume that our manifold is of bounded geometry). Note that as for the Callias-type theorem for manifolds without boundary, we consider odd-dimensional case so that a compact hypersurface in $M$ has even dimension. 

We now briefly describe our main results.

\subsection{Index of a boundary value problem for manifolds with non-compact boundary}\label{SS:Iindex}
Let $M$ be a complete Riemannian manifold with non-compact boundary $\pM$ and let $\DD=D+i\Phi$ be a Callias-type operator on $M$. We impose slightly stronger conditions on the growth of the potential $\Phi$ and call the operators satisfying these conditions {\em strongly Callias-type}. On manifolds without boundary these conditions guarantee that $\DD$ has a discrete spectrum. 

The restriction $\AAA$ of a strongly Callias-type  operator to the boundary is a self-adjoint strongly Callias-type operator on $\pM$ and, hence, has a discrete spectrum. In Sections~\ref{S:strCallias&maxdom} we use the eigensections of $\AAA$ to define a scale of Sobolev spaces $H^s_{\AAA}(\pM,E_\pM)$ on $\pM$ (this scale does depend on the operator $\AAA$). In Section~\ref{S:bvp} we use this scale to define elliptic boundary conditions for $\DD$. This definition is completely analogous to the classical construction \cite{BaerBallmann12}, but depends more heavily on $\AAA$, since the Sobolev spaces depend on $\AAA$.
These parts are somewhat parallel to the corresponding sections in \cite{BaerBallmann12}, but we feel it is necessary to set them up here. This is not only for the completeness of the paper but also because of the fact that some extra care should be taken to obtain the results due to the non-compactness.

Our first main result, Theorem~\ref{T:Fred}, is that a strongly Callias-type operator with elliptic boundary condition is Fredholm. This generalizes a theorem of B\"ar and Ballmann to manifolds with non-compact boundary. We also extend some standard properties of the index of boundary value problems on compact manifolds to our non-compact setting. In particular, we establish  a Splitting Theorem~\ref{T:splitting}: if $M=M_1\cup_N M_2$ where $N$ is a not necessarily compact hypersurface, then index on $M$ is equal to the sum of the indexes of a boundary value problem on $M_1$ and the dual boundary value problem on $M_2$.

\subsection{An almost compact essential support}\label{SS:Iesssupport}
In the theory of Callias-type operators on a manifold without boundary the crucial notion is that of the {\em essential support} --  a compact set $K\subset M$ such that the restriction of $\DD^*\DD$ to $M\backslash K$ is strictly positive. For manifolds with boundary we want an analogous subset, but the one which has the same boundary as $M$ (so that we can keep the boundary conditions). Such a set is necessarily non-compact. In Section~\ref{S:esscylindrical}, we introduce a class of non-compact manifolds, called {\em essentially cylindrical} manifolds, which replaces the class of compact manifolds in our study. An essentially cylindrical manifold is a manifold which outside of a compact set looks like a cylinder $[0,\varepsilon]\times N'$, where $N'$ is a non-compact manifold. The boundary of an essentially cylindrical manifold is a disjoint union of
two complete manifolds $N_0$ and $N_1$ which are isometric outside of a compact set. 

We say that an essentially cylindrical manifold $M_1$, which contains $\pM$,  is an {\em almost compact essential support of\/ $\DD$}\/ if the restriction of $\DD^*\DD$ to $M\backslash M_1$ is strictly positive and the restriction of $\DD$ to the cylinder $[0,\varepsilon]\times N'$ is a product, cf. Definition~\ref{D:almost compact support}. We show that every strongly Callias-type operator on $M$ which is a product near $\pM$ has an almost compact essential support. 

The main result of Section~\ref{S:esscylindrical} is that {\em the index of the APS boundary value problem for a strongly Callias-type operator $\DD$ on a complete {\em odd-dimensional} manifold $M$ is equal to the index of the APS boundary value problem of the restriction of $\DD$ to its almost compact essential support $M_1$}, cf. Theorem~\ref{T:indM=indM1}.

\subsection{Index on an essentially cylindrical manifold}\label{SS:Iesscyl}
In the previous section we reduced the study of the index of the APS boundary value problem on an arbitrary complete odd-dimensional manifold to index on an essentially cylindrical manifold. A systematic study of the latter is done in Section~\ref{S:esscyl}.

Let $M$ be an essentially cylindrical manifold and let $\DD$ be a strongly Callias-type operator on $M$, whose restriction to the cylinder $[0,\varepsilon]\times N'$ is a product. Suppose $\pM= N_0\sqcup N_1$ and denote the restrictions of $\DD$ to $N_0$ and $N_1$ by $\AAA_0$ and $-\AAA_1$ respectively (the sign convention means that we think  of  $N_0$ as the ``left boundary" and of $N_1$ as the ``right boundary" of $M$). 

Our main result here is that {\em the index of the APS boundary value problem for $\DD$ depends only on the operators $\AAA_0$ and $\AAA_1$} and not on the interior of the manifold $M$ and the restriction of\/ $\DD$ to the interior  of $M$, cf. Theorem~\ref{T:indep of D}. The odd-dimensionality  of $M$ is essential, since the proof uses the Callias-type index theorem on complete manifolds without boundary.

\subsection{The relative $\eta$-invariant}\label{SS:Ireleta}
Suppose now that $\AAA_0$ and $\AAA_1$ are self-adjoint strongly Callias-type operators on complete {\em even-dimensional} manifolds $N_0$ and $N_1$ respectively. An {\em almost compact cobordism} between $\AAA_0$ and $\AAA_1$ is an essentially cylindrical manifold $M$ with $\pM=N_0\sqcup N_1$ and a strongly Callias-type operator $\DD$ on $M$, whose restriction to the cylindrical part of $M$ is a product and such that the restrictions of $\DD$ to $N_0$ and $N_1$ are equal to $\AAA_0$ and $-\AAA_1$ respectively. We say that $\AAA_0$ and $\AAA_1$ are {\em cobordant}\/ if there exists an almost compact cobordism between them. Note that this means, in particular, that $\AAA_0$ and $\AAA_1$ are equal outside of a compact set. 

Let  $\DD$ be an almost compact cobordism between $\AAA_0$ and $\AAA_1$. Let $B_0$ and $B_1$ be the APS boundary conditions for $\DD$ at $N_0$ an $N_1$ respectively. Let $\ind \DD_{B_0\oplus B_1}$ denote the index of the APS boundary value problem for $\DD$.  We define the {\em relative $\eta$-invariant} by the formula
\begin{equation}\label{E:Ireleta}\notag
	\eta(\AAA_1,\AAA_0) \ = \  2\,\ind \DD_{B_0\oplus B_1}
	 \ + \ \dim\ker \AAA_0\ + \ \dim\ker \AAA_1.
\end{equation}
It follows from the result of the previous section, that $\eta(\AAA_1,\AAA_0)$ is independent of the choice of an almost compact cobordism. 

Notice the ``shift of dimension" of the manifold compared to the theory of $\eta$-invariants on compact manifolds. This is similar to the ``shift of dimension" in the Callias-type index theorem: on compact manifolds the index of elliptic operators is interesting for even-dimensional manifolds, while for Callias-type operators it is interesting for odd-dimensional manifolds. Similarly, the theory of $\eta$-invariants on compact manifolds is more interesting on odd-dimensional manifolds, while our relative $\eta$-invariant is defined on even-dimensional non-compact manifolds. 

If $M$ is a compact odd-dimensional manifold, then the Atiayh-Patodi-Singer index theorem \cite{APS1} implies that $\eta(\AAA_1,\AAA_0)= \eta(\AAA_1)-\eta(\AAA_0)$ (recall that since the dimension of $M$ is odd, the integral term in the index formula vanishes). In general, for non-compact manifolds, the individual $\eta$-invariants $\eta(\AAA_1)$ and $\eta(\AAA_0)$ might not be defined. However, we show that  $\eta(\AAA_1,\AAA_0)$ in many respects behaves like it was a difference of two individual $\eta$-invariants. In particular, we show, cf. Propositions~\ref{P:antisymmetry eta}-\ref{P:cocycle}, that 
\[
	\eta(\AAA_1,\AAA_0)\ = \ -\,\eta(\AAA_0,\AAA_1), 
	\qquad
	\eta(\AAA_2,\AAA_0)\ = \ \eta(\AAA_2,\AAA_1)\ + \ \eta(\AAA_1,\AAA_0).
\]

In \cite{FoxHaskell05} Fox and Haskell studied the index of a boundary value problem on manifolds of bounded geometry. They showed that under rather strong conditions on both $M$ and $\DD$ (satisfied for natural operators on manifolds with conical or cylindrical ends),  the heat kernel $e^{-t(\DD_B)^*\DD_B}$ is of trace class and its trace has an  asymptotic expansion similar to the one on compact manifolds. In this case the $\eta$-invariant can be defined by the usual analytic continuation of the $\eta$-function. We prove, cf. Proposition~\ref{P:FoxHaskell}, that under the assumptions of Fox and Haskell, our relative $\eta$-invariant satisfies $\eta(\AAA_1,\AAA_0)= \eta(\AAA_1)-\eta(\AAA_0)$.

More generally, it is often the case that the individual $\eta$-functions $\eta(s;\AAA_1)$ and $\eta(s;\AAA_0)$ are not defined,  but their difference $\eta(s;\AAA_1)-\eta(s;\AAA_0)$ is defined and regular at 0. Bunke, \cite{Bunke92}, studied the case of the undeformed Dirac operator $A$  and gave geometric conditions under which $\Tr(A_1e^{-tA_1^2}- A_0e^{-tA_0^2})$ has a nice asymptotic expansion. In this  case he defined the relative $\eta$-function using the usual formula, and showed that it has a meromorphic extension to the whole plane, which is regular at 0. He defined the  relative $\eta$-invariant as the value of the relative $\eta$-function at 0. There are also many examples of strongly Callias-type operators for which the difference of heat kernels $\AAA_1e^{-t\AAA_1^2}- \AAA_0e^{-t\AAA_0^2}$ is of trace class and the relative $\eta$-function can be defined by the formula similar to \cite{Bunke92}. We conjecture that in this situation our relative $\eta$-invariant $\eta(\AAA_1,\AAA_0)$ is equal to the value of the relative $\eta$-function at 0.

\subsection{The spectral flow}\label{SS:Isp flow}

Atiyah, Patodi and Singer, \cite{APS3}, introduced a notion of spectral flow $\spf(\AA)$ of a smooth family $\AA:= \{\AAA^s\}_{0\le s\le 1}$ of self-adjoint differential operators on closed manifolds as the integer that counts the net number of eigenvalues that change sign when $s$ changes from 0 to 1.  They showed that the spectral flow  computes the variation of the $\eta$-invariant $\eta(\AAA^1)-\eta(\AAA^0)$. 

In Section~\ref{S:sp flow} we consider a family of self-adjoint strongly Callias-type operators $\AA= \{\AAA^s\}_{0\le s\le 1}$ on a complete {\em even-dimensional} Riemannian manifold. We assume that there is a compact set $K\subset M$ such that the restriction of $\AAA^s$ to $M\backslash K$ is independent of $s$. Then all $\AAA^s$ are cobordant in the sense of Section~\ref{SS:Ireleta}. Since the spectrum of $\AAA^s$ is discrete for all $s$, the spectral flow can be defined in more or less usual way. We show, Theorem~\ref{T:sp flow}, that 
\[
	\eta(\AAA^1,\AAA^0)\ = \ 2\,\spf(\AA).
\]
Moreover, if  $\AAA_0$ is another self-adjoint strongly Callias-type operator which is cobordant to $\AAA^0$ (and, hence, to all $\AAA^s$), then
\[	
	\eta(\AAA^1,\AAA_0) \ - \ \eta(\AAA^0,\AAA_0) \ = \ 2\,\spf(\AA).
\]

\subsection{Further developments}\label{SS:further}
After the first version of this paper was released several applications and improvements of these ideas have been developed. In \cite{BrShi17b} the case of even-dimensional manifolds was considered. This case is quite different from the odd-dimensional case. The odd-dimensional case should be viewed as a generalization of the Callias index theorem to manifolds with boundary, while the even-dimensional case is rather a generalization of the Atiyah--Patodi--Singer theorem to non-compact setting. 

In \cite{Shi18} the description of  the Cauchy data spaces for non-compact boundary was given. This result was used in  \cite{BrShi19local} to study the local boundary value problem for Callias-type operators on manifolds with non-compact boundary. This generalizes an index theorem of Freed, \cite{Freed98}, and gives a new insight on the Horava-Witten anomaly \cite{HoravaWitten96}. 

In \cite{Br19Lorentz} an APS-index formula on globally hyperbolic Lorentzian manifolds with non-compact Cauchy hypersurface was obtained, generalizing the theory of B\"ar and Strohmaier \cites{BarStrohmaier15,BarStrohmaier16}.

All these results  relay on the techniques developed in the current paper.

\section{Operators on a manifold with non-compact boundary}\label{S:mfldwbd}

In this section we  discuss different domains for operators on manifolds with boundary.

\subsection{Setting and notations}\label{SS:setting}

Let $M$ be a complete Riemannian manifold with  (possibly noncompact) boundary $\pM$. 
We denote the Riemannian metric on $M$ by $g^M$ and its restriction to the boundary by $g^\pM$.  Then $(\pM,g^\pM)$ is also a complete Riemannian manifold.  We denote by $dV$ the volume form on $M$ and by $dS$ the volume form on $\dm$. The interior of $M$ is denoted by $\mathring{M}$. For a vector bundle $E$ over $M$, $C^\infty(M,E)$ is the space of smooth sections of $E$, $C_c^\infty(M,E)$ is the space of smooth sections of $E$ with compact support, and $C_\cc^\infty(M,E)$ is the space of smooth sections of $E$ with compact support in $\mathring{M}$. Note that
\[
C_\cc^\infty(M,E)\subset C_c^\infty(M,E)\subset C^\infty(M,E).
\]
We denote by $L^2(M,E)$ the Hilbert space of square-integrable sections of $E$, which is the completion of $C_c^\infty(M,E)$ with respect to the norm induced by the $L^2$-inner product
\[
(u_1,u_2)_{L^2(M,E)}\;:=\;\int_M\,\langle u_1,u_2\rangle\, dV,
\]
where $\langle\cdot,\cdot\rangle$ denotes the fiberwise inner product. Similarly, we have spaces $C^\infty(\dm,E_{\dm})$, $C_c^\infty(\dm,E_{\dm})$ and $L^2(\dm,E_{\dm})$ on the boundary $\dm$, where $E_{\dm}$ denotes the restriction of the bundle $E$ to $\dm$. If $u\in C^\infty(M,E)$, we denote by $u_{\dm}\in C^\infty(\dm,E_{\dm})$ the restriction of $u$ to $\dm$. For general sections on the boundary $\dm$, we use bold letters $\bfu,\bfv,\cdots$ to denote them.

Let $E,F$ be two Hermitian vector bundles over $M$ and $D:C^\infty_c(M,E)\to C^\infty_c(M,F)$ be a first-order differential operator. The \emph{formal adjoint} of $D$, denoted by $D^*$, is defined by
\begin{equation}\label{E:formal adjoint}
		\int_M\langle Du,v\rangle dV\;=\;\int_M\langle\, u,D^*v\rangle\, dV,
\end{equation}
for all $u,v\in C_\cc^\infty(M,E)$. If $E=F$ and $D=D^*$, then $D$ is called \emph{formally self-adjoint}.

\subsection{Minimal and maximal extensions}\label{SS:minmaxext}

We set $D_\cc:=D|_{C_\cc^\infty(M,E)}$ and view it as an unbounded operator from $L^2(M,E)$ to $L^2(M,F)$. The \emph{minimal extension} $D_{\min}$ of $D$ is the operator whose graph is the closure of that of $D_\cc$. The \emph{maximal extension} $D_{\max}$ of $D$ is defined to be $D_{\max}=\big((D^*)_{cc}\big)^\ad$, where the superscript ``$\ad$'' denotes the adjoint of the operator in the sense of functional analysis. Both $D_{\min}$ and $D_{\max}$ are closed operators. Their domains, $\dom D_{\min}$ and $\dom D_{\max}$, become Hilbert spaces equipped with the \emph{graph norm} $\|\cdot\|_D$, which is the norm associated with the inner product
\[
	(u_1,u_2)_D\;:=\;
	\int_M\,\big(\langle u_1,u_2\rangle+\langle Du_1,Du_2\rangle\big)\,dV.
\]
It's easy to see from the following Green's formula that $C_c^\infty(M,E)\subset\dom D_{\max}$.

\subsection{Green's formula}\label{SS:Greensfor}

Let $\tau\in TM|_{\dm}$ be the unit inward normal vector field along $\dm$. Using the Riemannian metric, $\tau$ can be identified with its associated one-form. We have the following formula (cf. \cite[Proposition 3.4]{BoosWoj93book}).

\begin{proposition}[Green's formula] \label{P:Greensfor}
Let $D$ be as above. Then for all $u\in C_c^\infty(M,E)$ and $v\in C_c^\infty(M,F)$,
\begin{equation}\label{E:Greensfor}
	\int_M\,\langle Du,v\rangle\, dV\;=\;
	\int_M\,\langle u,D^*v\rangle\, dV\,-\,
	\int_{\dm}\,\langle\sigma_D(\tau)u_{\dm},v_{\dm}\rangle\, dS,
\end{equation}
where $\sigma_D$ denotes the principal symbol of the operator $D$.
\end{proposition}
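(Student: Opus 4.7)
The plan is to derive the pointwise divergence identity
\[
\langle Du,v\rangle \,-\, \langle u,D^*v\rangle \;=\; \operatorname{div} X(u,v)
\]
for a naturally defined compactly supported vector field $X(u,v)$, and then integrate it over $M$ and apply the divergence theorem. I would define $X(u,v)$ by specifying its inner product against any smooth vector field $Y$ via
\[
g^M(X(u,v),Y) \;=\; \langle \sigma_D(Y^\flat)\,u,v\rangle,
\]
where $Y^\flat$ is the one-form metrically dual to $Y$. Compact support of $u$ and $v$ immediately ensures compact support of $X(u,v)$.

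First I would verify the divergence identity. This is a purely local computation: in a chart equipped with local orthonormal frames of $E$ and $F$, write $D=\sum_i A^i\partial_i+B$, so that $\sigma_D(dx^i)=A^i$. A short calculation combining the Leibniz rule, the local coordinate expression for $\operatorname{div}$ with respect to $dV$, and the characterization of $D^*$ given by \eqref{E:formal adjoint} applied to pairs supported inside the chart produces the identity. Since both sides are invariantly defined, a partition-of-unity argument glues the local identities into a global one.

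With the divergence identity in hand, I would integrate over $M$. Since $X(u,v)$ has compact support, the divergence theorem gives
\[
\int_M \operatorname{div} X(u,v)\, dV \;=\; \int_{\dm} g^M\bigl(X(u,v),\nu_{\text{out}}\bigr)\, dS,
\]
where $\nu_{\text{out}}$ denotes the outward unit normal. Since $\tau$ is fixed as the \emph{inward} normal, $\nu_{\text{out}}=-\tau$, and substituting the definition of $X(u,v)$ converts the right-hand side into $-\int_{\dm}\langle\sigma_D(\tau)u_{\dm},v_{\dm}\rangle\, dS$, yielding \eqref{E:Greensfor}. The main point to watch is the sign produced by the inward-versus-outward normal convention; beyond that, the argument is entirely classical and parallels the treatment in \cite[Prop.~3.4]{BoosWoj93book} referenced in the statement, so no genuine obstacle is expected.
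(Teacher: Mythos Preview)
Your argument is correct and is exactly the standard divergence-theorem proof of Green's formula for first-order operators; the sign bookkeeping with the inward normal $\tau$ is handled properly. Note that the paper itself does not supply a proof of this proposition at all---it simply cites \cite[Proposition~3.4]{BoosWoj93book}---so your write-up is in fact more detailed than what appears in the paper, and follows the same classical route as the cited reference.
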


\begin{remark}\label{R:Greensfor}
A more general version of formula \eqref{E:Greensfor} will be presented in Theorem \ref{T:maxdom} below.
\end{remark}

\subsection{Sobolev spaces}\label{SS:sobsp}

Let $\nabla^E$ be a Hermitian connection on $E$. For any $u\in C^\infty(M,E)$, the covariant derivative $\nabla^E u\in C^\infty(M,T^*M\otimes E)$. Applying the covariant derivative multiple times we get $(\nabla^E)^k\in C^\infty(M,T^*M^{\otimes k}\otimes E)$ for $k\in\ZZ_+$. We define \emph{$k^{th}$ Sobolev space} by 
\[
	H^k(M,E)\;:=\;
	\big\{\,u\in L^2(M,E):\,
	  (\nabla^E)^ju\in L^2(M,T^*M^{\otimes j}\otimes E)\text{ for all }j=1,\ldots,k\,\big\},
\]
where the covariant derivatives are understood in distributional sense. It is a Hilbert space with $H^k$-norm
\[
	\|u\|_{H^k(M,E)}^2\;:=\;
	\|u\|_{L^2(M,E)}^2+\|\nabla^E u\|_{L^2(M,T^*M\otimes E)}^2+\cdots+
	    \|(\nabla^E)^ku\|_{L^2(M,T^*M^{\otimes k}\otimes E)}^2.
\]
Note that when $M$ is compact, $H^k(M,E)$ does not depend on the choices of $\nabla^E$ and the Riemannian metric, but when $M$ is noncompact, it does.

We say $u\in L_{\loc}^2(M,E)$ if the restrictions of $u$ to compact subsets of $M$ have finite $L^2$-norms. For $k\in\ZZ_+$, we say $u\in H_{\loc}^k(M,E)$, the \emph{$k^{th}$ local Sobolev space}, if $u,\nabla^Eu,(\nabla^E)^2u,\dots,(\nabla^E)^ku$ all lie in $L_{\rm loc}^2$. This Sobolev space is independent of the preceding choices.
 
Similarly, we fix a Hermitian connection on $F$ and define the spaces $L^2(M,F)$, $L^2_{\loc}(M,F)$, $H^k(M,F)$, and $H^k_{\loc}(M,F)$. Again, definitions of these spaces apply without change to $\dm$.

\subsection{Completeness}\label{SS:cplt}

We recall the following definition of completeness and a lemma from \cite{BaerBallmann12}.

\begin{definition}\label{D:cplt}
We call $D$ a \emph{complete} operator if the subspace of compactly supported sections in $\dom D_{\max}$ is dense in $\dom D_{\max}$ with respect to the graph norm of $D$.
\end{definition}

\begin{lemma}\cite[Lemma 3.1]{BaerBallmann12}\label{L:cplt}
Let $f:M\to\RR$ be a Lipschitz function with compact support and $u\in\dom D_{\max}$. Then $fu\in\dom D_{\max}$ and
\[
D_{\max}(fu)\;=\;\sigma_D(df)u+fD_{\max}u.
\]
\end{lemma}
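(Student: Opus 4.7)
The plan is to reduce the Lipschitz case to the smooth one by mollification, and then pass to the limit using the closedness of $D_{\max}$.

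First I would verify the formula when $f\in C_c^\infty(M)$. For such $f$ and any $\phi\in C_\cc^\infty(M,F)$, the product $f\phi$ lies in $C_\cc^\infty(M,F)$ and the pointwise Leibniz rule gives $D^*(f\phi)=fD^*\phi+\sigma_{D^*}(df)\phi$. Combining this with the symbol identity $\sigma_{D^*}(\xi)=-\sigma_D(\xi)^*$ for real $\xi$ (which follows from \eqref{E:formal adjoint}) and using $u\in\dom D_{\max}=((D^*)_\cc)^{\ad}$,
\begin{align*}
(fu,D^*\phi)_{L^2}
&=\;(u,D^*(f\phi))_{L^2}\,-\,(u,\sigma_{D^*}(df)\phi)_{L^2}\\
&=\;(fD_{\max}u,\phi)_{L^2}\,+\,(\sigma_D(df)u,\phi)_{L^2}.
\end{align*}
Since this pairing is continuous in $\phi$ with respect to the $L^2$-norm, the definition of the Hilbert-space adjoint yields $fu\in\dom D_{\max}$ together with $D_{\max}(fu)=\sigma_D(df)u+fD_{\max}u$.

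For a general Lipschitz $f$ with compact support, I would then approximate $f$ by a sequence $f_n\in C_c^\infty(M)$ with the following four properties: $f_n\to f$ uniformly on $M$; $\sup_n\|df_n\|_{L^\infty}<\infty$; $df_n\to df$ pointwise almost everywhere (where $df$ exists a.e.\ by Rademacher's theorem); and all $\supp f_n$ lie in one fixed compact neighborhood $K$ of $\supp f$. This is standard: introduce a partition of unity, mollify in each Euclidean chart, and then multiply by a smooth cutoff.

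Applying the smooth case to each $f_n$ gives $f_n u\in\dom D_{\max}$ and $D_{\max}(f_n u)=\sigma_D(df_n)u+f_n D_{\max}u$. Uniform convergence of $f_n$ forces $f_n u\to fu$ and $f_n D_{\max}u\to fD_{\max}u$ in $L^2$, while Lebesgue dominated convergence on $K$ (with integrable majorant $C^2|u|^2\chi_K$, where $C=\sup_n\|\sigma_D(df_n)\|_{L^\infty}$) yields $\sigma_D(df_n)u\to\sigma_D(df)u$ in $L^2$. Hence $D_{\max}(f_n u)$ converges in $L^2$, and the closedness of $D_{\max}$ implies $fu\in\dom D_{\max}$ with the stated formula. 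I expect the only non-routine step to be the manifold mollification, but this is handled by the usual partition-of-unity argument reducing to the Euclidean case.
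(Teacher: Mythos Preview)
The paper does not give its own proof of this lemma; it simply cites \cite[Lemma~3.1]{BaerBallmann12}. Your argument is correct and is essentially the standard one: the smooth case follows directly from the definition $D_{\max}=\big((D^*)_\cc\big)^{\ad}$ together with the Leibniz rule and the symbol identity $\sigma_{D^*}(\xi)=-\sigma_D(\xi)^*$, and the Lipschitz case then follows by mollifying $f$, using dominated convergence on a fixed compact set, and invoking the closedness of $D_{\max}$. One minor remark: when you bound the integrand for dominated convergence, the constant should also absorb $\|\sigma_D(df)\|_{L^\infty(K)}$ (not just the $df_n$'s), but since $df_n\to df$ a.e.\ with a uniform bound this is automatic. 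The mollification step on a manifold is indeed routine via a finite partition of unity subordinate to a cover of the compact set $K$ by charts.
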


The next theorem, again from \cite{BaerBallmann12}, is still true here with minor changes of the proof.

\begin{theorem}\label{T:cplt}
Let $D:C^\infty(M,E)\to C^\infty(M,F)$ be a differential operator of first order. Suppose that there exists a constant $C>0$ such that
\[
|\sigma_D(\xi)|\;\le\;C\,|\xi|
\]
for all $x\in M$ and $\xi\in T_x^*M$. Then $D$ and $D^*$ are complete.
\end{theorem}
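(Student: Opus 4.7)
The plan is to mimic the standard cutoff argument from \cite{BaerBallmann12}, adapted to allow non-compact boundary. The key observation is that Lemma~\ref{L:cplt} already permits multiplication by a compactly supported Lipschitz function even when that function does not vanish on $\pM$, so the boundary causes no extra trouble: a cutoff that equals $1$ on a large ball automatically preserves whatever boundary behaviour $u$ already has.

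First I would construct an exhaustion by compactly supported cutoffs. Pick a basepoint $p\in M$ and let $r(x):=\operatorname{dist}_M(x,p)$ be the intrinsic Riemannian distance (which is $1$-Lipschitz). Fix a smooth $\phi\colon\RR\to[0,1]$ with $\phi\equiv1$ on $(-\infty,1]$ and $\phi\equiv0$ on $[2,\infty)$, and set
\[
	\chi_n(x)\;:=\;\phi\!\left(\tfrac{r(x)}{n}\right),\qquad n\in\ZZ_+.
\]
Because $(M,g^M)$ is a complete Riemannian manifold (with boundary), the metric balls $\{r\le 2n\}$ are relatively compact, so each $\chi_n$ is Lipschitz, takes values in $[0,1]$, has compact support in $M$, and converges pointwise to $1$. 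Moreover the Lipschitz constant of $\chi_n$ is bounded by $\|\phi'\|_\infty/n$, so $|d\chi_n|\le C'/n$ almost everywhere.

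Next I would verify the convergence in graph norm. Given any $u\in\dom D_{\max}$, Lemma~\ref{L:cplt} gives $\chi_n u\in\dom D_{\max}$ with compact support and
\[
	D_{\max}(\chi_nu)\;=\;\sigma_D(d\chi_n)\,u\;+\;\chi_n\,D_{\max}u.
\]
Since $0\le\chi_n\le1$ and $\chi_n\to1$ pointwise, dominated convergence yields $\chi_n u\to u$ and $\chi_n D_{\max}u\to D_{\max}u$ in $L^2$. The symbol bound $|\sigma_D(\xi)|\le C|\xi|$ gives
\[
	\|\sigma_D(d\chi_n)\,u\|_{L^2(M,F)}^2
	\;\le\;C^2\!\int_M|d\chi_n|^2\,|u|^2\,dV
	\;\le\;\frac{(CC')^2}{n^2}\,\|u\|_{L^2(M,E)}^2,
\]
which vanishes as $n\to\infty$. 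Hence $\chi_n u\to u$ in the graph norm, proving completeness of $D$.

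For $D^*$ the same argument applies once we observe that its symbol satisfies the same bound: $\sigma_{D^*}(\xi)=-\sigma_D(\xi)^*$ pointwise, so $|\sigma_{D^*}(\xi)|=|\sigma_D(\xi)|\le C|\xi|$. Re-running the cutoff argument with $D$ replaced by $D^*$ (and $E,F$ swapped) gives completeness of $D^*$. The only delicate point I anticipate is justifying that $r(x)$ really does give a proper Lipschitz exhaustion on a complete manifold with possibly non-compact boundary; this is the Hopf--Rinow-type statement that closed metric balls in a complete Riemannian manifold-with-boundary are compact, which follows by extending geodesics (allowing reflection or simply noting that the double of $M$ is a complete manifold without boundary whose distance restricts to $r$ on $M$). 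Everything else is routine dominated convergence.
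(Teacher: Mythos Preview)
Your proposal is correct and follows essentially the same approach as the paper: the paper's proof sketch fixes a basepoint (on $\pM$), takes $r$ to be the distance function, and then defers to the cutoff argument of \cite[Theorem~3.3]{BaerBallmann12}, which is exactly the construction you carry out in detail. Your explicit discussion of the Hopf--Rinow issue for manifolds with non-compact boundary is a welcome elaboration that the paper leaves implicit.
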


\begin{proof}[Sketch of the proof]
Fix a base point $x_0\in\dm$ and let $r:M\to\RR$ be the distance function from $x_0$, $r(x)={\rm dist}(x,x_0)$. Then $r$ is a Lipschitz function with Lipschitz constant 1. Now the proof is exactly the same as that of \cite[Theorem 3.3]{BaerBallmann12}.
\end{proof}

\begin{example}\label{EX:Dirac}
If $D$ is a Dirac-type operator (cf. Subsection \ref{SS:Drac op}), then $\sigma_D(\xi)=\sigma_{D^*}(\xi)=c(\xi)$ is the Clifford multiplication. So one can choose $C=1$ in Theorem \ref{T:cplt} and therefore $D$ and $D^*$ are complete.
\end{example}

\section{Strongly Callias-type operators and their domains}\label{S:strCallias&maxdom}

In this section we introduce our main object of study -- strongly Callias-type operators. The main property of these operators is the discreteness of their spectra. We discuss natural domains for a strongly  Callias-type operator on a manifold with non-compact boundary. We also introduce a scale of Sobolev spaces defined by a strongly Callias-type operator.

\subsection{A Dirac operator}\label{SS:Drac op}

Let $M$ be a complete Riemannian manifold and let $E\to M$ be a Hermitian vector bundle over $M$. We use the Riemannian metric of $M$ to identify the tangent and the cotangent bundles, $T^*M\simeq TM$.

\begin{definition}[\cite{LawMic89}, Definition~II.5.2]\label{D:Dirac bundle}
The bundle $E$ is called a {\em Dirac bundle} over $M$ if the following data is given
\begin{enumerate}
\item a Clifford multiplication $c:TM\simeq T^*M\to \End(E)$, such that $c(\xi)^2= -|\xi|^2$ and $c(\xi)^*= -c(\xi)$ for every $\xi\in T^*M$;
\item a Hermitian connection $\n^E$ on $E$ which is compatible with the Clifford multiplication in the sense that 
\[
	\n^E\,\big(\, c(\xi)\, u\,\big)\ = \ c(\n^{LC}\xi)\,u\ + \ c(\xi)\,\n^Eu,\qquad u\in C^\infty(M,E).
\]
Here $\n^{LC}$ denotes the Levi-Civita connection on $T^*M$. 
\end{enumerate}
\end{definition}

If $E$ is a Dirac bundle we consider the {\em Dirac operator} $D:C^\infty(M,E)\to C^\infty(M,E)$ defined by 
\begin{equation}\label{E:Diracopr}
	D\;=\;\sum_{j=1}^n\,\,c(e_j)\,\nabla_{e_j}^E,
\end{equation}
where $e_1,\dots,e_n$ is an orthonormal basis of $TM\simeq T^*M$. One easily checks that $D$ is formally self-adjoint, $D^*= D$. 

\subsection{Strongly Callias-type operators}\label{strCallias}

Let $\Phi\in{\rm End}(E)$ be a self-adjoint bundle map (called a \emph{Callias potential}). Then 
\begin{equation}\label{E:Callias}
	\DD\ :=\ D\,+\,i\Phi 
\end{equation}
is a Dirac-type operator on $E$ and
\begin{equation}\label{E:Calliaseq}
	\DD^*\DD\;=\;D^2+\Phi^2+i[D,\Phi], \qquad
	\DD\DD^*\;=\;D^2+\Phi^2-i[D,\Phi],
\end{equation}
where $[D,\Phi]:=D\Phi-\Phi D$ is the commutator of the operators $D$ and $\Phi$.

\begin{definition}\label{D:strCallias}
We say that $\DD$ is a \emph{strongly Callias-type operator} if
\begin{enumerate}
\item $[D,\Phi]$ is a zeroth order differential operator, i.e. a bundle map;
\item for any $R>0$, there exists a compact subset $K_R\subset M$ such that
\begin{equation}\label{E:strinvinf}
	\Phi^2(x)\;-\;\big|[D,\Phi](x)\big|\;\ge\;R
\end{equation}
for all $x\in M\setminus K_R$. Here $\big|[D,\Phi](x)\big|$ denotes the operator norm of the linear map $[D,\Phi](x):E_x\to E_x$. In this case, the compact set $K_R$ is called an \emph{R-essential support} of $\DD$.
\end{enumerate}
A compact set $K\subset M$ is called an {\em essential support} of $\DD$ if there exists an $R>0$ such that $K$ is an $R$-essential support of $\DD$. 
\end{definition}

\begin{remark}\label{R:strCalliasequiv}
This is a stronger version of the Callias condition, \cite[Definition~1.1]{Anghel93Callias}. Basically, we require that the Callias potential grows to infinity at the infinite ends of the manifold. Note that $\DD$ is a strongly Callias-type operator if and only if $\DD^*$ is.
\end{remark}

\begin{remark}\label{R:Phi commutes}
Condition (i) of Definition~\ref{D:strCallias} is equivalent to the condition that $\Phi$ commutes with the Clifford multiplication
\begin{equation}\label{E:Phi commutes}
	\big[c(\xi),\Phi\big] \ = \ 0, \qquad\text{for all }\ \xi\in T^*M.
\end{equation}
\end{remark}

\subsection{A product structure}\label{SS:product	}

We say that the Riemannian metric $g^M$ is  {\em product near the boundary} if there exists a neighborhood $U\subset M$ of the boundary which is isometric to the cylinder
\begin{equation}\label{E:Zr}
	Z_r\ := \ [0,r)\times\dm \ \subset \ M. 
\end{equation} 
In the following we identify $U$ with $Z_r$ and denote by $t$ the coordinate along the axis of $Z_r$. Then the inward unit normal vector to the boundary is given by $\tau = dt$. 

Further, we assume that the Clifford multiplication $c:T^*M\to \End(E)$ and the connection  $\n^E$ also have product structure on $Z_r$. In this situation we say that the Dirac bundle $E$ is {\em product on $Z_r$}. We say that the Dirac bundle $E$ is  {\em product near the boundary} if there exists $r>0$, a neighborhood $U$ of $\pM$  and an isometry $U\simeq Z_r$ such that $E$ is product on $Z_r$. In this situation the  restriction of the Dirac operator to $Z_r$ takes the form 
\begin{equation}\label{E:productD}
	D\; = \; c(\tau)\,\big(\partial_t+A\big), 
\end{equation}
where, by \eqref{E:Diracopr} (with $\tau=e_n$), 
\[
	A\;=\; -\,\sum_{j=1}^{n-1}\,c(\tau)c(e_j)\nabla_{e_j}^E.
\]
The operator $A$ is formally self-adjoint $A^*=A$ and anticommutes with $c(\tau)$
\begin{equation}\label{E:anticommuting}
	A\circ c(\tau)\;=\;-c(\tau)\circ A.
\end{equation}

Let $\DD=D+i\Phi:C^\infty(M,E)\to C^\infty(M,E)$ be a strongly Callias-type operator. Then the restriction of $\DD$ to $Z_r$ is given by 
\begin{equation}\label{E:productDD}
	\DD\;=\; 
	c(\tau)\,\big(\,\partial_t+A-ic(\tau)\Phi\,\big)\;=\;c(\tau)\,\big(\partial_t+\AAA\big),
\end{equation}
where 
\begin{equation}\label{E:AAA}
		\AAA \ := \ A \ - \ ic(\tau)\Phi:\,C^\infty(\dm,E_{\dm})
		\ \to \ C^\infty(\dm,E_{\dm}).
\end{equation}

\begin{definition}\label{D:producDD}
We say that a Callias-type operator $\DD$ is {\em product near the boundary} if the Dirac bundle $E$ is  product near the boundary and the restriction of the Callias potential $\Phi$ to $Z_r$ does not depend on $t$. The operator $\AAA$  of \eqref{E:AAA} is called the {\em restriction of $\DD$ to the boundary}. 
\end{definition}

\subsection{The restriction of the adjoint to the boundary}\label{SS:DD*}

Recall that $\Phi$ is a self-adjoint bundle map, which, by Remark~\ref{R:Phi commutes}, commutes with the Clifford multiplication. It follows from \eqref{E:productDD}, that
\begin{equation}\label{E:productDD*}
	\DD^*\;=\; 
	c(\tau)\,\big(\,\partial_t+\AAA^\#\,\big) \ = \  
	c(\tau)\,\big(\,\partial_t+A+ic(\tau)\Phi\,\big),
\end{equation}
where 
\begin{equation}\label{E:AAA*}
	\AAA^\# \ := \ A \ + \ ic(\tau)\Phi.
\end{equation}
Thus, {\em $\DD^*$ is product near the boundary}.

From \eqref{E:Phi commutes} and \eqref{E:anticommuting}, we obtain
\begin{equation}\label{E:tildeA}
	\AAA^\#\ = \ - c(\tau)\circ \AAA\circ c(\tau)^{-1}.
\end{equation}

\subsection{Self-adjoint strongly Callias-type operators}\label{SS:sa_stronglyCallias}

Notice that $\AAA$ is a formally self-adjoint Dirac-type operator on $\dm$ and thus is an essentially self-adjoint elliptic operator by \cite[Theorem 1.17]{GromovLawson83}. Since $c(\tau)$ anticommutes with $A$, we have
\begin{equation}\label{E:squareAAA}
	\AAA^2\;=\;A^2+ic(\tau)[A,\Phi]+\Phi^2.
\end{equation}
It follows from Definition \ref{D:strCallias} and \eqref{E:AAA} that $[A,\Phi]$ is also a bundle map with the same norm as $[D,\Phi]$. Thus the last two terms on the right hand side of \eqref{E:squareAAA} grow to infinity at the infinite ends of $\dm$. By \cite[Lemma 6.3]{Shubin99}, the spectrum of $\AAA$ is discrete.  In this sense, $\AAA$ is very similar to a strongly Callias-type operator, with the difference that the potential $c(\tau)\Phi$ is {\em anti-self-adjoint} and, as a result $\AAA$ is self-adjoint. We formalize the properties of $\AAA$ in the following.

\begin{definition}\label{D:sa_stronglyCallias}
Let $A$ be a Dirac operator on $\pM$. An operator  $\AAA:= A+\Psi$, where $\Psi:E_\pM\to E_\pM$ is a self-adjoint bundle map, is called a  \emph{self-adjoint strongly Callias-type operator} if
\begin{enumerate}
\item the anticommutator $[A,\Psi]_+:= A\circ\Psi+\Psi\circ A$ is a zeroth order differential operator, i.e. a bundle map;
\item for any $R>0$, there exists a compact subset $K_R\subset \pM$ such that
\begin{equation}\label{E:strinvinfA}
	\Psi^2(x)\;-\;\big|[A,\Psi]_+(x)\big|\;\ge\;R
\end{equation}
for all $x\in \pM\setminus K_R$.  In this case, the compact set $K_R$ is called an \emph{R-essential support} of $\AAA$.
\end{enumerate}
\end{definition}

Using this definition we summarize the properties of $\AAA$ in the following.
\begin{lemma}\label{L:restrictiontobndry}
Let $\DD$ be a strongly Callias-type operator on a complete Riemannian manifold $M$ and let $\AAA$ be the restriction of\/ $\DD$ to the boundary. Then $\AAA$ is a self-adjoint strongly Callias-type operator on $\pM$. In particular, it has a discrete spectrum. 
\end{lemma}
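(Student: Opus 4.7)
The plan is to directly verify the three conditions of Definition~\ref{D:sa_stronglyCallias} for $\AAA = A - ic(\tau)\Phi$, treating $A$ as the boundary Dirac operator and $\Psi:= -ic(\tau)\Phi$ as the boundary potential. The discreteness of the spectrum then follows from the argument already sketched in Subsection~\ref{SS:sa_stronglyCallias} via \cite[Lemma 6.3]{Shubin99}, so the main content is to check that $\Psi$ fits the self-adjoint strongly Callias-type framework.

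First, I would check that $\Psi$ is a self-adjoint bundle map. Using $c(\tau)^* = -c(\tau)$, $\Phi^* = \Phi$, and the fact that $\Phi$ commutes with Clifford multiplication (Remark~\ref{R:Phi commutes}), one gets
\[
	\Psi^*\ =\ (-ic(\tau)\Phi)^*\ =\ i\Phi^*c(\tau)^*\ =\ -i\Phi c(\tau)\ =\ -ic(\tau)\Phi\ =\ \Psi.
\]
Next, for condition (i), I would compute the anticommutator. Since $A$ anticommutes with $c(\tau)$ by \eqref{E:anticommuting} and $\Phi$ commutes with $c(\tau)$, we have $A\Psi = -iAc(\tau)\Phi = ic(\tau)A\Phi$, so
\[
	[A,\Psi]_+ \ =\ A\Psi+\Psi A\ =\ ic(\tau)A\Phi - ic(\tau)\Phi A\ =\ ic(\tau)\,[A,\Phi].
\]
Thus it suffices to show $[A,\Phi]$ is a bundle map on $\pM$. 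But near the boundary $\DD$ is product, so $\Phi$ is $t$-independent, and from \eqref{E:productD} one has $[D,\Phi] = c(\tau)[\partial_t+A,\Phi] = c(\tau)[A,\Phi]$ on $\pM$. Since $[D,\Phi]$ is a bundle map by Definition~\ref{D:strCallias}(i) and $c(\tau)$ is invertible, $[A,\Phi]$ is also a bundle map, hence so is $[A,\Psi]_+$.

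For condition (ii), I would compute $\Psi^2$ and $|[A,\Psi]_+|$ pointwise. Using $c(\tau)^2=-1$ and that $c(\tau)$ commutes with $\Phi$,
\[
	\Psi^2\ =\ (-ic(\tau)\Phi)(-ic(\tau)\Phi)\ =\ -c(\tau)^2\Phi^2\ =\ \Phi^2.
\]
Since $ic(\tau)$ is a unitary endomorphism of $E_\pM$ (as $c(\tau)^*c(\tau) = -c(\tau)^2 = 1$), we obtain the operator-norm identity
\[
	\bigl|[A,\Psi]_+(x)\bigr|\ =\ \bigl|ic(\tau)[A,\Phi](x)\bigr|\ =\ \bigl|[A,\Phi](x)\bigr|\ =\ \bigl|[D,\Phi](x)\bigr|
\]
for $x\in\pM$, where the last equality again uses $[D,\Phi]=c(\tau)[A,\Phi]$ on $\pM$. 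Combining these,
\[
	\Psi^2(x)\ -\ \bigl|[A,\Psi]_+(x)\bigr|\ =\ \Phi^2(x)\ -\ \bigl|[D,\Phi](x)\bigr|
\]
for all $x\in\pM$. Given $R>0$, the strong Callias condition on $\DD$ provides a compact $K_R\subset M$ outside of which this quantity is $\ge R$; then $K_R\cap\pM$ is compact in $\pM$ and serves as an $R$-essential support for $\AAA$.

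This establishes both parts of Definition~\ref{D:sa_stronglyCallias}. The only subtle point, and the place where I would be most careful, is the interaction of the product-structure hypothesis with condition (i): one needs $\Phi$ to be $t$-independent near $\pM$ to conclude that $[D,\Phi]|_\pM = c(\tau)[A,\Phi]$, which is what allows conditions on $[D,\Phi]$ to descend to conditions on $[A,\Psi]_+$. The discreteness of the spectrum of $\AAA$ is then precisely the content of the computation \eqref{E:squareAAA} and the application of \cite[Lemma 6.3]{Shubin99} already presented in the excerpt.
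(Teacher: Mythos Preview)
Your proof is correct and follows essentially the same approach as the paper, which states the lemma as a summary of the preceding discussion in Subsection~\ref{SS:sa_stronglyCallias} without writing out a separate proof. You have simply made explicit the verifications that the paper leaves to the reader: the self-adjointness of $\Psi=-ic(\tau)\Phi$, the identity $[A,\Psi]_+=ic(\tau)[A,\Phi]$, the relation $[D,\Phi]|_{\pM}=c(\tau)[A,\Phi]$ under the product hypothesis, and the pointwise equality $\Psi^2-|[A,\Psi]_+|=\Phi^2-|[D,\Phi]|$.
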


\subsection{Sobolev spaces on the boundary}\label{SS:Sob&specproj}
The operator $\id+\AAA^2$ is positive. Hence, for any $s\in\RR$, its powers $(\id+\AAA^2)^{s/2}$ can be defined using functional calculus.

\begin{definition}\label{D:Sobolev}
Set
\[
	C_\AAA^\infty(\dm,E_{\dm})	\;:=\;
	\Big\{\,\bfu\in C^\infty(\dm,E_{\dm}):\,
	\big\|(\id+\AAA^2)^{s/2}\bfu\big\|_{L^2(\dm,E_\pM)}^2<+\infty\mbox{ for all }s\in\RR\,\Big\}.
\]
For all $s\in\RR$ we define the \emph{Sobolev $H_\AAA^s$-norm} on $C_\AAA^\infty(\dm,E_{\dm})$ by
\begin{equation}\label{E:Sobnorm}
	\|\bfu\|_{H_\AAA^s(\dm,E_\pM)}^2\;:=\;
		\big\|(\id+\AAA^2)^{s/2}\bfu\big\|_{L^2(\dm,E_\pM)}^2.
\end{equation}
The Sobolev space $H_\AAA^s(\dm,E_{\dm})$ is defined to be the completion of $C_\AAA^\infty(\dm,E_{\dm})$ with respect to this norm.
\end{definition}

\begin{remark}\label{R:Soblev}
In general, 
\[
	C_c^\infty(\dm,E_{\dm})\;\subset\; C_\AAA^\infty(\dm,E_{\dm})\;\subset \;
	C^\infty(\dm,E_{\dm}).
\] 
When $\dm$ is compact, the above spaces are all equal and the space $C_\AAA^\infty(\dm,E_{\dm})$ is independent of $\AAA$. However, if $\pM$ is not compact, these spaces are different and $C_\AAA^\infty(\dm,E_{\dm})$ does depend on the operator $\AAA$. Consequently,  if $\pM$ is not compact, the  Sobolev spaces $H_\AAA^s(\dm,E_{\dm})$  depend on $\AAA$.
\end{remark}

\begin{remark}\label{R:alternative Sobolev}
Alternatively one could define the $s$-Sobolev space to be the completion of\/ $C_c^\infty(\dm,E_{\dm})$ with respect to the $H_\AAA^s$-norm. In general, this leads to a different scale of Sobolev spaces, cf. \cite[\S3.1]{Hebey99book} for more details. We prefer our definition, since the space $H^{\rm fin}(\AAA)$,  defined below in \eqref{E:Hfin}, which plays an important role in our discussion, is a subspace of $C^\infty_\AAA(\pM,E_\pM)$ but is not a subspace of $C_c^\infty(\pM,E_\pM)$. 
\end{remark}

The rest of this section follows rather closely the exposition in Sections~5 and 6 of \cite{BaerBallmann12} with some changes needed to accommodate the non-compactness of the boundary. 

\subsection{Eigenvalues and eigensections of $\AAA$}\label{SS:eigenvalues}

Let
\begin{equation}\label{E:spec}
-\infty\leftarrow\cdots\le\lambda_{-2}\le\lambda_{-1}\le\lambda_0\le\lambda_1\le\lambda_2\le\cdots\to+\infty
\end{equation}
be the spectrum of $\AAA$ with each eigenvalue being repeated according to its (finite) multiplicity. Fix a corresponding $L^2$-orthonormal basis $\{\bfu_j\}_{j\in\ZZ}$ of eigensections of $\AAA$. By definition, each element in $C_\AAA^\infty(\dm,E_{\dm})$ is $L^2$-integrable and thus can be written as $\bfu=\sum_{j=-\infty}^\infty a_j\bfu_j$. Then
\[
\|\bfu\|_{H_\AAA^s(\dm,E_\pM)}^2\;=\;\sum_{j=-\infty}^\infty\,|a_j|^2(1+\lambda_j^2)^s.
\]
On the other hand, let
\begin{equation}\label{E:Hfin}
		H^{\rm fin}(\AAA)\;:=\;
	\Big\{\,\bfu=\sum_ja_j\bfu_j:\,\,a_j=0\mbox{ for all but finitely many }j\,\Big\}
\end{equation}
be the space of finitely generated sections. Then $H^{\rm fin}(\AAA)\subset C_\AAA^\infty(\dm,E_{\dm})$ and for any $s\in\RR$, $H^{\rm fin}(\AAA)$ is dense in $H_\AAA^s(\dm,E_{\dm})$. We obtain an alternative description of the Sobolev spaces
\[
	H_\AAA^s(\dm,E_{\dm})\;=\;
	\Big\{\,\bfu=\sum_ja_j\bfu_j:\,\,\sum_j|a_j|^2(1+\lambda_j^2)^s<+\infty\,\Big\}.
\]

\begin{remark}\label{R:Sobolev}
The following properties follow from our definition and preceding discussion.
\begin{enumerate}
\item $H_\AAA^0(\dm,E_{\dm})=L^2(\dm,E_{\dm})$.
\item\label{I:embedding} If $s<t$, then $\|\bfu\|_{H_\AAA^s(\dm,E_\pM)}\le\|\bfu\|_{H_\AAA^t(\dm,E_\pM)}$. And we shall show shortly in Theorem \ref{T:cptembedding} that there is still a Rellich embedding theorem, i.e., the induced embedding $H_\AAA^{t}(\dm,E_{\dm}) \hookrightarrow H_\AAA^s(\dm,E_{\dm})$ is compact.
\item $\bigcap_{s\in\RR}H_\AAA^s(\dm,E_{\dm})=C_\AAA^\infty(\dm,E_{\dm})$.
\item\label{I:pairing} For all $s\in\RR$, the pairing
\[
H_\AAA^s(\dm,E_{\dm})\;\times\;H_\AAA^{-s}(\dm,E_{\dm})\;\to\;\CC,
\qquad 
\Big(\sum_j a_j\bfu_j,\sum_j b_j\bfu_j\Big)\;\mapsto\;\sum_j\overline{a}_jb_j
\]
is perfect. Therefore, $H_\AAA^s(\dm,E_{\dm})$ and $H_\AAA^{-s}(\dm,E_{\dm})$ are pairwise dual.
\end{enumerate}
\end{remark}

We have the following version of the Rellich Embedding Theorem:
\begin{theorem}\label{T:cptembedding}
For $s<t$, the embedding $H_\AAA^{t}(\dm,E_{\dm})\hookrightarrow H_\AAA^s(\dm,E_{\dm})$ mentioned in Remark \ref{R:Sobolev}.\ref{I:embedding} is compact.
\end{theorem}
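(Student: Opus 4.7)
The plan is to reduce everything to a weighted $\ell^2$ statement via the eigenbasis of $\AAA$ and then run the standard tail argument, using the fact that the discreteness of $\operatorname{spec}(\AAA)$ forces $|\lambda_j|\to\infty$. Crucially, although $\pM$ is non-compact so a direct Rellich theorem is unavailable, discreteness of the spectrum (Lemma~\ref{L:restrictiontobndry}) is exactly the substitute we need.

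First I would fix an $L^2$-orthonormal eigenbasis $\{\bfu_j\}_{j\in\ZZ}$ of $\AAA$ with eigenvalues $\lambda_j$ as in \eqref{E:spec}, so that by the alternative description of the Sobolev spaces given just after \eqref{E:Hfin}, the assignment $\sum a_j\bfu_j\mapsto(a_j)$ identifies $H^r_\AAA(\pM,E_\pM)$ isometrically with the weighted sequence space $\ell^2_r:=\{(a_j):\sum|a_j|^2(1+\lambda_j^2)^r<\infty\}$ for every $r\in\RR$. In these coordinates the embedding $H^t_\AAA\hookrightarrow H^s_\AAA$ becomes the diagonal multiplier with weights $w_j:=(1+\lambda_j^2)^{(s-t)/2}$, and since $s-t<0$ and $|\lambda_j|\to\infty$, we have $w_j\to 0$ as $|j|\to\infty$.

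Next, given a bounded sequence $\bfv_k=\sum_j a_j^{(k)}\bfu_j$ in $H^t_\AAA$ with $\|\bfv_k\|_{H^t_\AAA}\le C$, the bound $|a_j^{(k)}|\le C(1+\lambda_j^2)^{-t/2}$ shows that for each fixed $j$ the scalar sequence $\{a_j^{(k)}\}_k$ is bounded in $\CC$. A standard diagonal extraction then produces a subsequence, still denoted $\{\bfv_k\}$, and numbers $a_j\in\CC$ with $a_j^{(k)}\to a_j$ for every $j$. Fatou's lemma applied to the $H^t_\AAA$-norm gives $\sum_j|a_j|^2(1+\lambda_j^2)^t\le C^2$, so $\bfv:=\sum_j a_j\bfu_j\in H^t_\AAA$.

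Finally I would estimate
\[
\|\bfv_k-\bfv\|_{H^s_\AAA}^2 \ = \ \sum_{|j|\le N}|a_j^{(k)}-a_j|^2(1+\lambda_j^2)^s \ + \ \sum_{|j|>N}|a_j^{(k)}-a_j|^2(1+\lambda_j^2)^s,
\]
where the first sum tends to zero as $k\to\infty$ for any fixed $N$ by pointwise convergence of finitely many coefficients, while the second is bounded by
\[
\Bigl(\sup_{|j|>N}(1+\lambda_j^2)^{s-t}\Bigr)\cdot\sum_{|j|>N}|a_j^{(k)}-a_j|^2(1+\lambda_j^2)^t \ \le \ (2C)^2\,\sup_{|j|>N}(1+\lambda_j^2)^{s-t}.
\]
The supremum tends to $0$ as $N\to\infty$ because $|\lambda_j|\to\infty$ and $s-t<0$. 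Choosing $N$ large and then $k$ large yields $\bfv_k\to\bfv$ in $H^s_\AAA$, proving compactness.

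The only subtle point worth highlighting, and arguably the "main obstacle" conceptually, is that on a non-compact $\pM$ one cannot appeal to a localized Rellich theorem; the compactness is driven entirely by the Callias potential, which forces discreteness of $\operatorname{spec}(\AAA)$ and hence $w_j\to 0$. Everything else is the routine diagonal/tail argument in a weighted $\ell^2$ space.
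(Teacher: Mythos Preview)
Your proof is correct and rests on the same spectral fact as the paper's---discreteness of $\operatorname{spec}(\AAA)$ forces $|\lambda_j|\to\infty$, hence $(1+\lambda_j^2)^{s-t}\to 0$---but the implementation differs. The paper invokes an abstract criterion (a closed bounded set is compact iff it can be $\varepsilon$-approximated by finite-dimensional subspaces for every $\varepsilon>0$) and then checks that the unit ball of $H^t_\AAA$, viewed in $H^s_\AAA$, is $\varepsilon$-close to $\operatorname{span}\{\bfu_j:|j|\le N\}$ via a single tail estimate. You instead run the sequential argument directly: diagonal extraction on coefficients, Fatou for the limit, then the same tail bound. Your route is slightly more self-contained since it avoids the auxiliary criterion; the paper's route has the advantage that the same abstract proposition is reused later to prove the Rellich-type embedding $H^1_\DD(Z_r,E)\hookrightarrow L^2(Z_r,E)$, where no single diagonalizing basis is available and the finite-dimensional approximation viewpoint is more convenient.
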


To prove the theorem, we use the following result,  cf., for example,  \cite[Proposition 2.1]{Bisgard12}.

\begin{proposition}\label{P:cptembedding}
A closed bounded subset $K$ in a Banach space $X$ is compact if and only if for every $\varepsilon>0$, there exists a finite dimensional subspace $Y_\varepsilon$ of $X$ such that every element $x\in K$ is within distance $\varepsilon$ from $Y_\varepsilon$.
\end{proposition}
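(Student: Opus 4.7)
The plan is to apply Proposition \ref{P:cptembedding} directly to the image $B$ of the closed unit ball of $H_\AAA^t(\pM,E_\pM)$ under the embedding into $H_\AAA^s(\pM,E_\pM)$. The spectral description of the Sobolev spaces in terms of the $L^2$-orthonormal eigenbasis $\{\bfu_j\}_{j\in\ZZ}$ of $\AAA$ reduces the statement to an elementary fact about weighted $\ell^2$-spaces, and the essential input is that $|\lambda_j|\to\infty$ as $|j|\to\infty$, which is guaranteed by the discreteness of the spectrum of $\AAA$ established in Lemma \ref{L:restrictiontobndry}.

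First I would observe that $B$ is bounded in $H_\AAA^s(\pM,E_\pM)$: if $\bfu=\sum_j a_j\bfu_j$ satisfies $\|\bfu\|_{H_\AAA^t}^2=\sum_j |a_j|^2(1+\lambda_j^2)^t\le 1$, then since $s<t$ and $1+\lambda_j^2\ge 1$ we have $(1+\lambda_j^2)^s\le(1+\lambda_j^2)^t$, so $\|\bfu\|_{H_\AAA^s}\le 1$. Thus $B$ is a closed bounded subset of $H_\AAA^s(\pM,E_\pM)$ after taking closure, and by Proposition \ref{P:cptembedding} it suffices to show that for every $\varepsilon>0$ there is a finite dimensional subspace $Y_\varepsilon\subset H_\AAA^s(\pM,E_\pM)$ approximating every element of $B$ to within $\varepsilon$.

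Given $\varepsilon>0$, the discreteness of the spectrum \eqref{E:spec} provides an integer $N>0$ such that $(1+\lambda_j^2)^{s-t}<\varepsilon^2$ for all $|j|>N$, using that $s-t<0$. Set
\[
Y_\varepsilon \ := \ \operatorname{span}\{\bfu_j:|j|\le N\}\ \subset\ H_\AAA^s(\pM,E_\pM),
\]
which is finite dimensional. For any $\bfu=\sum_j a_j\bfu_j\in B$, define $\bfu_\varepsilon:=\sum_{|j|\le N}a_j\bfu_j\in Y_\varepsilon$. Then
\[
\|\bfu-\bfu_\varepsilon\|_{H_\AAA^s}^2 \ = \ \sum_{|j|>N}|a_j|^2(1+\lambda_j^2)^s \ = \ \sum_{|j|>N}|a_j|^2(1+\lambda_j^2)^t(1+\lambda_j^2)^{s-t} \ \le \ \varepsilon^2\,\|\bfu\|_{H_\AAA^t}^2 \ \le\ \varepsilon^2.
\]
Thus every element of $B$ lies within distance $\varepsilon$ of $Y_\varepsilon$, and Proposition \ref{P:cptembedding} yields that the closure of $B$ is compact in $H_\AAA^s(\pM,E_\pM)$, which is exactly the desired compactness of the embedding.

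There is no serious obstacle: the argument is the standard one for Rellich's theorem on the torus transplanted to the eigenbasis of $\AAA$, and the only nontrivial point is the accumulation of the eigenvalues only at $\pm\infty$, which has already been established. One should simply be careful to use the two-sided discreteness \eqref{E:spec}, so that the finite set $\{|j|\le N\}$ really does exhaust all eigenvalues with $(1+\lambda_j^2)^{t-s}\le\varepsilon^{-2}$.
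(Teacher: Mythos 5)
Your proposal does not prove the statement at hand. The statement is Proposition~\ref{P:cptembedding} itself --- the abstract criterion that a closed bounded subset $K$ of a Banach space $X$ is compact if and only if it can be approximated to within every $\varepsilon>0$ by a finite dimensional subspace $Y_\varepsilon$. What you have written instead is a proof of Theorem~\ref{T:cptembedding} (the Rellich-type compact embedding $H_\AAA^t\hookrightarrow H_\AAA^s$), and it \emph{assumes} Proposition~\ref{P:cptembedding} as its key tool. In other words, you have taken the proposition as given and reproduced the application that the paper itself carries out (the paper's proof of Theorem~\ref{T:cptembedding} is essentially your computation with the eigenbasis $\{\bfu_j\}$ and the cutoff at $|j|\le N$), while the object you were asked to prove is never addressed. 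Note also that the paper does not prove Proposition~\ref{P:cptembedding}; it cites it from Bisgard, so there is nothing circular in the paper --- but your text cannot serve as a proof of it.

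A correct proof of the proposition is a standard total boundedness argument and uses nothing about Sobolev spaces or the operator $\AAA$. For the forward direction: if $K$ is compact it is totally bounded, so given $\varepsilon>0$ choose a finite $\varepsilon$-net $x_1,\dots,x_n$ in $K$ and take $Y_\varepsilon=\operatorname{span}\{x_1,\dots,x_n\}$. For the converse: $K$ is bounded, say $\|x\|\le R$ for all $x\in K$; given $\varepsilon>0$, every $x\in K$ lies within $\varepsilon$ of some $y\in Y_\varepsilon$, and one may take $\|y\|\le R+\varepsilon$; the ball of radius $R+\varepsilon$ in the finite dimensional space $Y_\varepsilon$ is compact, hence admits a finite $\varepsilon$-net, which then gives a finite $2\varepsilon$-net for $K$; thus $K$ is totally bounded, and being a closed subset of a complete space it is compact. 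Your remarks about the discreteness of the spectrum and the choice of $N$ with $(1+\Lambda_N)^{s-t}$ small are correct ingredients --- but for Theorem~\ref{T:cptembedding}, not for the proposition you were asked to establish.
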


\begin{proof}[Proof of Theorem \ref{T:cptembedding}]
Let $B$ be the unit ball in $H_\AAA^{t}(\dm,E_{\dm})$. We use Proposition~\ref{P:cptembedding} to show that the closure $\bar{B}$ of $B$ in $H_\AAA^{s}(\dm,E_{\dm})$ is compact in $H_\AAA^{s}(\dm,E_{\dm})$.

For simplicity, suppose that $\lambda_0$ is an eigenvalue of $\AAA$ with smallest absolute value and for $n>0$, set $\Lambda_n:=\min\{\lambda_n^2,\lambda_{-n}^2\}$. Then $\{\Lambda_n\}$ is an increasing sequence by \eqref{E:spec}. For every $\varepsilon>0$, there exists an integer $N>0$, such that $(1+\Lambda_n)^{s-t}<\varepsilon^2/4$ for all $n\ge N$. 

Consider the finite-dimensional space
\[
	Y_\varepsilon\;:=\;
	{\rm span}\{\bfu_{j}:-N\le j\le N\}\;\subset\;H_\AAA^{s}(\dm,E_{\dm}).
\]
We claim that every element $\bar{\bfu}\in \bar{B}$ is within distance $\varepsilon$ from $Y_\varepsilon$. Indeed, choose $\bfu=\sum_ja_j\bfu_j\in B$, such that the $H_\AAA^{s}$-distance between $\bar{\bfu}$ and $\bfu$ is less than $\varepsilon/2$. Then $\bfu':=\sum_{j=-N}^Na_j\bfu_j$ belongs to $Y_\varepsilon$ and the $H_\AAA^{s}$-distance
\begin{multline}\notag
	\|\bfu-\bfu'\|_{H_\AAA^{s}(\dm,E_\pM)}^2 \;=\;
	\sum_{|j|>N}|a_j|^2(1+\lambda_j^2)^s\
	\;\le\;\sum_{|j|>N}|a_j|^2(1+\lambda_j^2)^t\cdot(1+\Lambda_N)^{s-t}
	\\ \le \ 
	\|\bfu\|_{H_\AAA^{t}(\dm,E_\pM)}^2\cdot (1+\Lambda_N)^{s-t} 
	\;\le\;
	(1+\Lambda_N)^{s-t}\;<\;\frac{\varepsilon^2}{4}.
\end{multline}
Hence $\bfu$ is within distance $\varepsilon/2$ of $Y_\varepsilon$, and therefore $\bar{\bfu}$ is within distance $\varepsilon$ of $Y_\varepsilon$. The theorem then follows from Proposition \ref{P:cptembedding}.
\end{proof}

\subsection{The hybrid Soblev spaces}\label{SS:hybrid}
For $I\subset\RR$, let
\begin{equation}\label{E:specproj}
	P_I^\AAA\;:\;\sum_j a_j\bfu_j\;\mapsto\;\sum_{\lambda_j\in I}a_j\bfu_j
\end{equation}
be the spectral projection. It's easy to see that
\[
	H_I^s(\AAA)\;:=\;
	P_I^\AAA(H_\AAA^s(\dm,E_{\dm}))\;\subset\;H_\AAA^s(\dm,E_{\dm})
\]
for all $s\in\RR$. 

\begin{definition}\label{D:hybrid space}
For $a\in\RR$, we define the \emph{hybrid} Sobolev space
\begin{equation}\label{E:checkH}
	\cHH(\AAA)\;:=\;
	H_{(-\infty,a]}^{1/2}(\AAA)\;\oplus\;H_{(a,\infty)}^{-1/2}(\AAA)
	\ \subset \ H^{-1/2}_\AAA(\dm,E_{\dm})
\end{equation}
with $\cHH$-norm
\[
	\|\bfu\|_{\cHH(\AAA)}^2\;:=\;
	\big\|P_{(-\infty,a]}^\AAA\bfu\big\|_{H_\AAA^{1/2}(\dm,E_\pM)}^2\;+\;
	\big\|P_{(a,\infty)}^\AAA\bfu\big\|_{H_\AAA^{-1/2}(\dm,E_\pM)}^2.
\]
\end{definition}

The space $\cHH(\AAA)$ is independent of the choice of $a$. Indeed, for $a_1<a_2$, the difference between the corresponding $\cHH$-norms only occurs on the finite dimensional space  $P_{[a_1,a_2]}^\AAA(L^2(\dm,E_{\dm}))$. Thus the norms defined using different values of $a$ are equivalent.

Similarly, we define
\begin{equation}\label{E:hatH}
	\hHH(\AAA)\; := \;
	H_{(-\infty,a]}^{-1/2}(\AAA)\;\oplus\;H_{(a,\infty)}^{1/2}(\AAA)
\end{equation}
with $\hHH$-norm
\[
\|\bfu\|_{\hHH(\AAA)}^2\;:=\;
\|P_{(-\infty,a]}^\AAA\bfu\|_{H_\AAA^{-1/2}(\dm,E_\pM)}^2\;+\;
\|P_{(a,\infty)}^\AAA\bfu\|_{H_\AAA^{1/2}(\dm,E_\pM)}^2.
\]
Then 
\[
	\hHH(\AAA)\;=\;\cHH(-\AAA).
\]
The pairing of Remark \ref{R:Sobolev}.\ref{I:pairing} induces a perfect pairing
\[
\cHH(\AAA)\;\times\;\hHH(\AAA)\;\to\;\CC.
\]

\subsection{The hybrid space of the dual operator}\label{SS:hybrid dual}

Recall,  that the restriction $\AAA^\#$ of $\DD^*$ to the boundary can be computed by \eqref{E:tildeA}. Thus 
the isomorphism $c(\tau):E_{\dm}\to E_{\dm}$ sends each eigensection $\bfu_j$ of $\AAA$ associated to eigenvalue $\lambda_j$ to an eigensection of $\AAA^\#$ associated to eigenvalue $-\lambda_j$. We conclude that the set of eigenvalues of $\AAA^\#$ is $\{-\lambda_j\}_{j\in \ZZ}$ with associated $L^2$-orthonormal eigensections $\{c(\tau)\bfu_j\}_{j\in\ZZ}$. For $\bfu=\sum_ja_j\bfu_j\in H_\AAA^s(\dm,E_{\dm})$, we have 
\[
	\|c(\tau)\bfu\|_{H_{\AAA^\#}^s(\dm,E_\pM)}^2
	\;=\;
	\sum_j|a_j|^2\,\big(1+(-\lambda_j)^2\big)^s\;=\;\|\bfu\|_{H_\AAA^s(\dm,E_\pM)}^2.
\]
So $c(\tau)$ induces an isometry between Sobolev spaces $H_\AAA^s(\dm,E_{\dm})$ and $H_{\AAA^\#}^s(\dm,E_{\dm})$ for any $s\in\RR$. Furthermore, it restricts to an  isomorphism between $H_{(-\infty,a]}^s(\AAA)$  and $H_{[-a,\infty)}^s(\AAA^\#)$. Therefore we conclude that

\begin{lemma}\label{L:admap}
Over $\dm$, the isomorphism $c(\tau):E_{\dm}\to E_{\dm}$ induces an isomorphism $\hHH(\AAA)\to\cHH(\AAA^\#)$. In particular, the sesquilinear form
\[
	\beta:\cHH(\AAA)\times\cHH(\AAA^\#) \ \to \ \CC,
	\qquad\	\beta(\bfu,\bfv) \ :=\ -\,\big(\bfu,-c(\tau)\bfv\big)
	\ =\ -\,\big(c(\tau)\bfu,\bfv\big),
\]
is a perfect pairing of topological vector spaces.
\end{lemma}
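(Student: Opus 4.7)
The plan is to deduce both assertions from the spectral correspondence between $\AAA$ and $\AAA^\#$ that is already laid out in the paragraph preceding Lemma~\ref{L:admap}. The starting point is the observation that $c(\tau)$ sends the $L^2$-orthonormal eigenbasis $\{\bfu_j\}$ of $\AAA$, with eigenvalues $\lambda_j$, to the $L^2$-orthonormal eigenbasis $\{c(\tau)\bfu_j\}$ of $\AAA^\#$, with eigenvalues $-\lambda_j$. Combined with the coefficient formula \eqref{E:Sobnorm}, this already gives that $c(\tau)$ induces an isometry $H_\AAA^s(\dm,E_{\dm})\to H_{\AAA^\#}^s(\dm,E_{\dm})$ for every $s\in\RR$ and restricts to isomorphisms $H_{(-\infty,a]}^s(\AAA)\to H_{[-a,\infty)}^s(\AAA^\#)$.

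For the first claim, I would apply this with $s=-1/2$ to the first summand and $s=1/2$ to the second summand of $\hHH(\AAA)$, as defined in \eqref{E:hatH}, and compute
\[
c(\tau)\bigl(\hHH(\AAA)\bigr)\ =\ H^{-1/2}_{[-a,\infty)}(\AAA^\#)\ \oplus\ H^{1/2}_{(-\infty,-a)}(\AAA^\#).
\]
This differs from the definitional splitting of $\cHH(\AAA^\#)$ at threshold $b=-a$ only in how the (finite-dimensional) eigenspace of $\AAA^\#$ for the eigenvalue $-a$ is handled. As the remark following Definition~\ref{D:hybrid space} notes, any two choices of splitting point yield the same underlying vector space with equivalent norms, so the two descriptions agree as topological vector spaces. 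Thus $c(\tau):\hHH(\AAA)\to\cHH(\AAA^\#)$ is a topological isomorphism; alternatively, one may perturb $a$ away from the spectrum of $\AAA$ to avoid the endpoint issue altogether.

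For the perfect pairing assertion, I would use $c(\tau)^{\ast}=-c(\tau)$ to rewrite $\beta(\bfu,\bfv)=-(\bfu,-c(\tau)\bfv)_{L^2}=(\bfu,c(\tau)\bfv)_{L^2}$, and observe that $c(\tau)$ also maps $\cHH(\AAA^\#)$ isomorphically onto $\hHH(\AAA)$: since $c(\tau)^{-1}=-c(\tau)$, inverting the first assertion and swapping the roles of $\AAA$ and $\AAA^\#$ gives exactly this statement. Consequently $\beta$ is identified with the continuous extension of the $L^2$-pairing on $\cHH(\AAA)\times\hHH(\AAA)\to\CC$, which was already shown, in the paragraph after \eqref{E:hatH}, to be a perfect pairing via the $H_\AAA^{1/2}$--$H_\AAA^{-1/2}$ duality in Remark~\ref{R:Sobolev}.\ref{I:pairing}. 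The only point that needs mild bookkeeping is the treatment of eigenvalues sitting exactly at the threshold $\pm a$, but this is absorbed by the splitting-point independence of $\cHH$ and $\hHH$, so no genuine analytic obstacle arises.
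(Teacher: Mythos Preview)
Your proposal is correct and follows exactly the approach of the paper: the lemma is stated immediately after the paragraph establishing that $c(\tau)$ maps the eigenbasis of $\AAA$ to that of $\AAA^\#$ with negated eigenvalues, inducing isometries $H_\AAA^s\to H_{\AAA^\#}^s$ and isomorphisms $H_{(-\infty,a]}^s(\AAA)\to H_{[-a,\infty)}^s(\AAA^\#)$, and the paper simply writes ``Therefore we conclude that'' before the lemma, leaving the details you have supplied as implicit. Your handling of the threshold issue and the reduction of $\beta$ to the known $\cHH(\AAA)\times\hHH(\AAA)$ pairing are precisely what is needed to complete the argument.
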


\subsection{Sections in a neighborhood of the boundary}\label{SS:sections in Zr}

Recall from \eqref{E:Zr} that we identify a neighborhood of $\pM$ with the product $Z_r=[0,r)\times\pM$. The $L^2$-sections over $Z_r$ can be written as
\[
	u(t,x)\;=\;\sum_j\,  a_j(t)\,\bfu_j(x)
\]
in terms of the $L^2$-orthonormal basis $\{\bfu_j\}$ on $\dm$. We fix a smooth cut-off function $\chi:\RR\to\RR$ with
\begin{equation}\label{E:cut-off}
	\chi(t) \ = \ 
	\begin{cases}1\ \   &\mbox{for} \ t\le r/3\\
	0\ \  &\mbox{for} \ t\ge2r/3.
	\end{cases}
\end{equation}

Recall that $H^{\rm fin}(\AAA)$ is dense in $\cHH(\AAA)$ and $\hHH(\AAA)$. For $\bfu\in H^{\rm fin}(\AAA)$, we define a smooth section $\EE\bfu$ over $Z_r$ by
\begin{equation}\label{E:extension}
	(\EE\bfu)(t)\;:=\;\chi(t)\cdot\exp(-t|\AAA|)\bfu.
\end{equation}
Thus, if $\bfu(x)=\sum_ja_j\bfu_j(x)$, then
\begin{equation}\label{E:extension2}
	(\EE\bfu)(t,x)\;=\;\chi(t)\,\sum_ja_j\cdot\exp(-t|\lambda_j|)\cdot\bfu_j(x).
\end{equation}
It's easy to see that $\EE\bfu$ is an $L^2$-section over $Z_r$. So we get a linear map
\[
	\EE:\;H^{\rm fin}(\AAA)\;\to\;C^\infty(Z_r,E)\cap L^2(Z_r,E)
\]
which we call the {\em extension map}.

As in Subsection~\ref{SS:minmaxext} we denote by $\|\cdot\|_\DD$ the graph norm of $\DD$.

\begin{lemma}\label{L:extension}
For all\/ $\bfu\in H^{\rm fin}(\AAA)$, the extended section $\EE\bfu$ over $Z_r$ belongs to $\dom\DD_{\max}$. And there exists a constant $C=C(\chi,\AAA)>0$ such that
\[
	\big\|\EE\bfu\big\|_\DD\;\le\;C\,\|\bfu\|_{\cHH(\AAA)}
	\quad\mbox{and}\quad
	\big\|c(\tau)\EE\bfu\big\|_{\DD^*}\;\le\;C\,\|\bfu\|_{\hHH(\AAA)}.
\]
\end{lemma}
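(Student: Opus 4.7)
The plan is to work on the cylinder $Z_r$, where the product form $\DD=c(\tau)(\partial_t+\AAA)$ of \eqref{E:productDD} applies. Since $\chi(t)=0$ for $t\ge 2r/3$, the section $\EE\bfu$ extends by zero to a smooth section on all of $M$; that it lies in $\dom\DD_{\max}$ will follow once we verify that both $\EE\bfu$ and the pointwise expression for $\DD\EE\bfu$ are $L^2$ on $M$. For $\bfu=\sum_j a_j\bfu_j\in H^{\rm fin}(\AAA)$ the sum is finite, so all term-by-term manipulations are rigorous. The key computation is
\[
	(\partial_t+\AAA)\bigl(e^{-t|\lambda_j|}\bfu_j\bigr)\;=\;\bigl(\lambda_j-|\lambda_j|\bigr)\,e^{-t|\lambda_j|}\,\bfu_j,
\]
which vanishes for $\lambda_j\ge 0$ and equals $2\lambda_j e^{-t|\lambda_j|}\bfu_j$ for $\lambda_j<0$. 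Hence
\[
	\DD(\EE\bfu)\;=\;c(\tau)\,\chi'(t)\sum_j a_j e^{-t|\lambda_j|}\bfu_j\;+\;2\,c(\tau)\,\chi(t)\!\!\sum_{\lambda_j<0} a_j\lambda_j e^{-t|\lambda_j|}\bfu_j.
\]

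I would then estimate each piece using the elementary bound $\int_0^r\chi(t)^2 e^{-2t|\lambda_j|}\,dt\le\min(r,\tfrac{1}{2|\lambda_j|})$, which is $\le C(1+\lambda_j^2)^{-1/2}$ for $\lambda_j>0$ and $\le r\le r(1+\lambda_j^2)^{1/2}$ in general. By orthonormality of $\{\bfu_j\}$,
\[
	\|\EE\bfu\|_{L^2(M)}^2\;\le\;C\!\!\sum_{\lambda_j>0}|a_j|^2(1+\lambda_j^2)^{-1/2}\;+\;C\!\!\sum_{\lambda_j\le 0}|a_j|^2(1+\lambda_j^2)^{1/2},
\]
which is precisely $C\|\bfu\|_{\cHH(\AAA)}^2$ with $a=0$ in Definition~\ref{D:hybrid space} (the $\cHH$-norm is independent of the choice of $a$). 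The second term of $\DD\EE\bfu$ contributes only over $\lambda_j<0$, giving at most $\tfrac12\sum_{\lambda_j<0}|a_j|^2|\lambda_j|\le C\|P_{(-\infty,0]}^\AAA\bfu\|_{H_\AAA^{1/2}}^2$. The first ($\chi'$) term is supported in $[r/3,2r/3]$, so $\int\chi'(t)^2 e^{-2t|\lambda_j|}\,dt\le Ce^{-2r|\lambda_j|/3}$ decays exponentially in $|\lambda_j|$ and is in particular bounded by $C(1+\lambda_j^2)^{-1/2}$, contributing at most $C\|\bfu\|_{H_\AAA^{-1/2}}^2\le C\|\bfu\|_{\cHH(\AAA)}^2$. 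Summing the three bounds yields the first inequality.

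For the second inequality, observe that
\[
	c(\tau)\EE\bfu\;=\;\chi(t)\sum_j a_j e^{-t|\lambda_j|}\bigl(c(\tau)\bfu_j\bigr),
\]
and by the discussion preceding Lemma~\ref{L:admap}, $\{c(\tau)\bfu_j\}$ is an $L^2$-orthonormal eigenbasis of $\AAA^\#$ with eigenvalues $\{-\lambda_j\}$. Since $|-\lambda_j|=|\lambda_j|$, the section $c(\tau)\EE\bfu$ is exactly the analogue of the extension $\EE$ applied to $c(\tau)\bfu$ but for the operator $\AAA^\#$. Running the previous argument verbatim with $(\DD,\AAA)$ replaced by $(\DD^*,\AAA^\#)$ produces $\|c(\tau)\EE\bfu\|_{\DD^*}\le C\|c(\tau)\bfu\|_{\cHH(\AAA^\#)}$, and Lemma~\ref{L:admap} identifies this with $C\|\bfu\|_{\hHH(\AAA)}$ up to a uniform constant. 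The main work is the spectral bookkeeping to match the decomposition of $\|\EE\bfu\|_\DD^2$ against the $H_\AAA^{1/2}$ and $H_\AAA^{-1/2}$ halves of the $\cHH$-norm; the substance of the argument is the algebraic cancellation $(\partial_t+\AAA)e^{-t|\AAA|}=0$ on the non-negative spectrum of $\AAA$, which is precisely what makes the hybrid structure $\cHH(\AAA)$ the natural target space for the extension map $\EE$.
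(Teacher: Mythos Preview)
Your proof is correct and follows essentially the same approach as the paper's: split $\DD(\EE\bfu)$ according to the sign of the eigenvalues of $\AAA$, estimate each piece against the appropriate half of the $\cHH(\AAA)$-norm, and then reduce the second inequality to the first via the isometry $c(\tau):\hHH(\AAA)\to\cHH(\AAA^\#)$ of Lemma~\ref{L:admap}. The only difference is that the paper defers the detailed estimate for $\|\EE\bfu\|_\DD$ to \cite[Lemma~5.5]{BaerBallmann12}, whereas you carry it out explicitly (your constant $\tfrac12$ in the bound on the $\chi\AAA$-term should be $2$, but this is immaterial).
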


\begin{proof}
For the first claim, we only need to show that $\DD(\EE\bfu)$ is an $L^2$-section over $Z_r$. Since
\[
	\DD(\EE\bfu)\;=\;
	\DD(\EE P_{(-\infty,0]}^\AAA\bfu)\;+\;\DD(\EE P_{(0,\infty)}^\AAA\bfu),
\]
it suffices to consider each summand separately. Recall that $\DD=c(\tau)(\partial_t+\AAA)$ on $Z_r$. By \eqref{E:extension}, we have
\[
	\DD(\EE P_{(0,\infty)}^\AAA\bfu)\;=\;
	c(\tau)\,\chi'\exp(-t\AAA)P_{(0,\infty)}^\AAA\bfu,
\]
which is clearly an $L^2$-section over $Z_r$. On the other hand,
\[
	\DD(\EE P_{(-\infty,0]}^\AAA\bfu)\;=\;
	c(\tau)\,\big(2\chi\AAA+\chi'\big)\exp(t\AAA)P_{(-\infty,0]}^\AAA\bfu,
\]
which is again an $L^2$-section over $Z_r$. Therefore $\EE\bfu\in\dom\DD_{\max}$.

The proof of the first inequality is exactly the same as that of \cite[Lemma 5.5]{BaerBallmann12}. For the second one, just notice that $\AAA^\#$ is the restriction to the boundary of $\DD^*$ and, by Lemma~\ref{L:admap}, $c(\tau):\cHH(\AAA^\#)\to \hHH(\AAA)$ is an isomorphism of Hilbert spaces.
\end{proof}

The following lemma is an analogue of \cite[Lemma 6.2]{BaerBallmann12} with exactly the same proof.

\begin{lemma}\label{L:restriction}
There is a constant $C>0$ such that for all $u\in C_c^\infty(Z_r,E)$,
\[
\|u_{\dm}\|_{\cHH(\AAA)}\;\le\;C\,\|u\|_\DD.
\]
\end{lemma}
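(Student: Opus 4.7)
The plan is a mode-by-mode analysis in the eigenbasis $\{\bfu_j\}$ of $\AAA$. Given $u\in C_c^\infty(Z_r,E)$, expand $u(t,x) = \sum_j a_j(t)\bfu_j(x)$ with $a_j\in C_c^\infty([0,r))$ and set $b_j := a_j' + \lambda_j a_j$. Since $\DD=c(\tau)(\partial_t+\AAA)$ on $Z_r$ and $c(\tau)$ is unitary, the $L^2$-orthonormality of $\{\bfu_j\}$ gives
\[
	\|u\|_{L^2(Z_r)}^2 \ = \ \sum_j\int_0^r |a_j(t)|^2\,dt, \qquad \|\DD u\|_{L^2(Z_r)}^2 \ = \ \sum_j\int_0^r |b_j(t)|^2\,dt,
\]
while $u_\pM=\sum_j a_j(0)\bfu_j$. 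Choosing $a=0$ in the definition of $\cHH(\AAA)$, the lemma reduces to proving, with $C$ independent of $j$,
\[
	|a_j(0)|^2(1+\lambda_j^2)^{1/2} \ \le\ C\,(\|a_j\|^2+\|b_j\|^2) \quad \text{for}\ \lambda_j\le 0,
\]
\[
	|a_j(0)|^2(1+\lambda_j^2)^{-1/2} \ \le\ C\,(\|a_j\|^2+\|b_j\|^2) \quad \text{for}\ \lambda_j> 0,
\]
where the $L^2$-norms are taken over $[0,r]$.

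For $\lambda_j>0$ I would differentiate $e^{-2\lambda_j t}|a_j(t)|^2$, substitute $a_j'=b_j-\lambda_j a_j$, and integrate over $[0,\infty)$ using the compact support of $a_j$ to obtain the identity
\[
	|a_j(0)|^2 \ = \ 4\lambda_j\int_0^\infty e^{-2\lambda_j t}|a_j|^2\,dt \ - \ 2\,\Re\int_0^\infty e^{-2\lambda_j t}\, b_j\overline{a_j}\, dt.
\]
Since $e^{-2\lambda_j t}\le 1$, Cauchy--Schwarz together with the elementary inequality $\max(\lambda_j,1)\le\sqrt{1+\lambda_j^2}$ absorbs the weight and yields the second bound. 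For $\lambda_j<0$, integrating $\tfrac{d}{dt}(e^{\lambda_j t}a_j)=e^{\lambda_j t}b_j$ over $[0,\infty)$ and using compact support gives
\[
	a_j(0) \ = \ -\int_0^\infty e^{\lambda_j t}\,b_j(t)\, dt,
\]
whence Cauchy--Schwarz produces $|a_j(0)|^2\le\|b_j\|^2/(2|\lambda_j|)$; the inequality $\sqrt{1+\lambda_j^2}\le\sqrt{2}|\lambda_j|$, valid for $|\lambda_j|\ge 1$, supplies the weighted estimate. For $|\lambda_j|<1$ (including $\lambda_j=0$), the crude estimate $|a_j(0)|^2\le 2\|a_j\|\,\|b_j\|$ --- obtained from the same differentiation identity with weight $\alpha=-\lambda_j$ --- together with $\sqrt{1+\lambda_j^2}\le\sqrt{2}$ closes this range.

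Summing the mode-wise estimates over $j$ reconstitutes $\|u_\pM\|_{\cHH(\AAA)}^2\le C\,\|u\|_\DD^2$. The only real obstacle is the bookkeeping that matches the correct exponential weight to the sign of $\lambda_j$: on the positive-spectrum side the $H^{-1/2}$ weight absorbs the factor of $\lambda_j$ arising from the integrated $|a_j|^2$ term, while on the negative-spectrum side the required factor of $(1+\lambda_j^2)^{1/2}$ is manufactured from the exponential decay of $e^{\lambda_j t}$. Once the case split is set up, the rest is elementary one-dimensional integration by parts and Cauchy--Schwarz, and the resulting constant $C$ is absolute.
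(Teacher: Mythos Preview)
Your argument is correct and follows the same strategy as the proof to which the paper refers (namely \cite[Lemma~6.2]{BaerBallmann12}): expand in the eigenbasis of $\AAA$, reduce to one-dimensional ODE estimates for each coefficient $a_j$, and match exponential weights to the sign of $\lambda_j$ so that the $H^{1/2}$ weight on the nonpositive spectrum and the $H^{-1/2}$ weight on the positive spectrum are each controlled by $\|a_j\|^2+\|b_j\|^2$. Your case split and the elementary integration-by-parts identities are exactly the mechanism used there, so this is the intended proof rather than a genuinely different route.
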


\subsection{A natural domain for boundary value problems}\label{SS:natural domain}

For closed manifolds the ellipticity of $\DD$ implies that $\dom(\DD_{\max})\subset \Hloc^1(M,E)$. However, if $\pM\not=\emptyset$, then near the boundary the sections in $\dom(\DD_{\max})$ can behave badly. That is why, if one wants to talk about boundary value of sections,  one needs to consider a smaller domain for $\DD$.

\begin{definition}\label{D:H1D}
We define the norm
\begin{equation}\label{E:locH1}
	\|u\|_{H_\DD^1(Z_r,E)}^2\;:=\;
	\|u\|_{L^2(Z_r,E)}^2\;+\;\|\partial_tu\|_{L^2(Z_r,E)}^2\;+\;
	\|\AAA u\|_{L^2(Z_r,E)}^2.
\end{equation}
and denote by $H_\DD^1(Z_r,E)$ the completion of  $C_c^\infty(Z_r,E)$ with respect to this norm. We refer to \eqref{E:locH1} as the $H_\DD^1(Z_r)$-norm. 

In general, for any integer $k\ge1$, let $H_\DD^k(Z_r,E)$ be the completion of $C_c^\infty(Z_r,E)$ with respect to the $H_\DD^k(Z_r)$-norm given by
\begin{equation}\label{E:locHk}
	\|u\|_{H_\DD^k(Z_r,E)}^2\;:=\;
	\|u\|_{L^2(Z_r,E)}^2\;+\;\|(\partial_t)^ku\|_{L^2(Z_r,E)}^2
	\;+\;\|\AAA^ku\|_{L^2(Z_r,E)}^2.
\end{equation}
\end{definition}

Note that $H_\DD^1(Z_r,E)\subset H^1_{\loc}(Z_r,E)\cap L^2(Z_r,E)$. Moreover, we have the following analogue of the Rellich embedding theorem:

\begin{lemma}\label{L:comHDtoL2}
The inclusion map $H_\DD^1(Z_r,E)\hookrightarrow L^2(Z_r,E)$ is compact. 
\end{lemma}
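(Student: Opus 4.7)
The strategy is to Fourier-decompose in the $\pM$-direction using the $L^2$-orthonormal eigensection basis $\{\bfu_j\}_{j\in\ZZ}$ of $\AAA$ (which exists since $\AAA$ has discrete spectrum by Lemma \ref{L:restrictiontobndry}), and then combine a large-eigenvalue tail estimate with the classical Rellich compactness in the $t$-direction, which is available because $[0,r)$ is a bounded interval. Concretely, I will verify the criterion of Proposition \ref{P:cptembedding} for the unit ball $B\subset H_\DD^1(Z_r,E)$ sitting inside $L^2(Z_r,E)$.

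Every $u\in C_c^\infty(Z_r,E)$ admits an expansion $u(t,x)=\sum_{j\in\ZZ}a_j(t)\bfu_j(x)$ with $a_j(t)=(u(t,\cdot),\bfu_j)_{L^2(\pM,E_\pM)}\in H^1([0,r))$, and by Parseval's identity on $\pM$ combined with Fubini the three terms of the norm \eqref{E:locH1} decouple as
\[
	\|u\|_{L^2(Z_r,E)}^2 \ = \ \sum_j\|a_j\|_{L^2([0,r))}^2, \qquad \|\partial_tu\|_{L^2(Z_r,E)}^2 \ = \ \sum_j\|a_j'\|_{L^2([0,r))}^2, \qquad \|\AAA u\|_{L^2(Z_r,E)}^2 \ = \ \sum_j\lambda_j^2\|a_j\|_{L^2([0,r))}^2.
\]
By density, these identities extend to $u\in H_\DD^1(Z_r,E)$; in particular, for $u\in B$ one has $\sum_j\|a_j\|_{H^1([0,r))}^2\le 1$ and $\sum_j\lambda_j^2\|a_j\|_{L^2([0,r))}^2\le 1$.

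Given $\varepsilon>0$, choose $N$ so large that $\Lambda_N^{-1}<\varepsilon^2/4$, where $\Lambda_N:=\min\{\lambda_N^2,\lambda_{-N}^2\}\to\infty$ as in the proof of Theorem \ref{T:cptembedding}. For $u\in B$, the high-frequency part $u_{>N}:=\sum_{|j|>N}a_j(t)\bfu_j(x)$ satisfies
\[
	\|u_{>N}\|_{L^2(Z_r,E)}^2 \ \le \ \Lambda_N^{-1}\sum_{|j|>N}\lambda_j^2\|a_j\|_{L^2([0,r))}^2 \ \le \ \Lambda_N^{-1}\|\AAA u\|_{L^2(Z_r,E)}^2 \ < \ \varepsilon^2/4.
\]
For the low-frequency part $u_{\le N}:=\sum_{|j|\le N}a_j(t)\bfu_j(x)$, the vector-valued function $\mathbf{a}:=(a_{-N},\dots,a_N)$ lies in the unit ball of $H^1([0,r);\CC^{2N+1})$. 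Since $[0,r)$ is bounded, the classical one-dimensional Rellich theorem makes $H^1([0,r);\CC^{2N+1})\hookrightarrow L^2([0,r);\CC^{2N+1})$ compact, so Proposition \ref{P:cptembedding} yields a finite-dimensional $V_\varepsilon\subset L^2([0,r);\CC^{2N+1})$ within distance $\varepsilon/2$ of every such $\mathbf{a}$. Lifting $V_\varepsilon$ along the isometric embedding $(f_{-N},\dots,f_N)\mapsto\sum_{|j|\le N}f_j(t)\bfu_j(x)$ yields a finite-dimensional $Y_\varepsilon\subset L^2(Z_r,E)$ within distance $\varepsilon$ of every $u\in B$, and Proposition \ref{P:cptembedding} concludes that the closure of $B$ in $L^2(Z_r,E)$ is compact.

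The main technical point is the passage from $C_c^\infty(Z_r,E)$ to the completion: one must check that $\partial_t$ and $\AAA$ commute with the spectral expansion on $H_\DD^1(Z_r,E)$, so that the decoupled norm identities genuinely hold for limit elements. This reduces to the observation that finite sums $\sum_{|j|\le N}\phi_j(t)\bfu_j(x)$ with $\phi_j\in C_c^\infty([0,r))$ are dense in $H_\DD^1(Z_r,E)$, after which the rest is a routine Fubini/Parseval argument.
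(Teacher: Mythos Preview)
Your proof is correct and follows essentially the same approach as the paper's own proof: both use the eigensection expansion $u=\sum_j a_j(t)\bfu_j$, apply Proposition~\ref{P:cptembedding}, control the high-frequency tail $|j|>N$ via the growth $\Lambda_N\to\infty$, and handle the low-frequency part $|j|\le N$ by the classical one-dimensional Rellich embedding $H^1([0,r))\hookrightarrow L^2([0,r))$. Your packaging of the low-frequency part as a single vector-valued function in $H^1([0,r);\CC^{2N+1})$ is slightly more streamlined than the paper's explicit choice of a finite net $X_\varepsilon\subset L^2([0,r))$, but the underlying argument is identical.
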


\begin{proof}
Let $B$ be the unit ball about the origin in $H_\DD^1(Z_r,E)$ and let $\oB$ denote its closure in $L^2(Z_r,E)$. We need to prove that $\oB$ is compact. By Proposition~\ref{P:cptembedding} it is enough to show that for every $\varepsilon>0$ there exists a finite dimensional subspace $Y_\varepsilon\in L^2(Z_r,E)$ such that every $u\in \oB$ is within distance $\varepsilon$ from $Y_\varepsilon$.

Let $\lambda_j$ and $\bfu_j$ be as in Subsection~\ref{SS:eigenvalues}.
As in the proof of Theorem~\ref{T:cptembedding} we set $\Lambda_n:=\min\{\lambda_n^2,\lambda_{-n}^2\}$. Choose $N>0$ such that 
\begin{equation}\label{E:Lambda}
	1+\Lambda_n\ > \ \frac{8}{\varepsilon^2} \qquad\text{for all}\quad  n\ge N.
\end{equation} 

Let $H^1([0,r))$ denote the Sobolev space of complex-valued functions on the interval $[0,r)$ with norm 
\[
	\|a\|_{H^1([0,r))}^2\ :=\ \|a\|^2_{L^2([0,r))}\ + \ \|a'\|^2_{L^2([0,r))}.
\]
Let $B'\subset H^1([0,r))$ denote the unit ball about the origin in $H^1([0,r))$ and let $\oB'$ be its closure in $L^2([0,r))$. By the classical Rellich embedding theorem $\oB'$ is compact in $L^2([0,r))$. Hence, for every $\varepsilon>0$ there exists a finite set $X_{\varepsilon}$ such that every $a\in \oB'$ is within distance $\frac\varepsilon{\sqrt{16N+8}}$ from $X_{\varepsilon}$.

We now define the finite dimensional space
\[
	Y_\varepsilon \ := \ 
	\big\{\sum_{j=-N}^N\, a_j(t)\,\bfu_j:\, a_j(t)\in X_{\varepsilon}\,\big\}
	\ \subset \ L^2(Z_r,E).
\]
We claim that every $u\in \oB$ is within distance $\varepsilon$ from $Y_\varepsilon$. Indeed, let $\ou\in \oB$. We choose $u= \sum_{j=-\infty}^\infty b_j(t)\bfu_j\in B$ such that 
\begin{equation}\label{E:ou-u}
	\|\ou-u\| \ < \ \frac{\varepsilon}2. 
\end{equation}
Since $\{\bfu_j\}$ is an orthonormal basis of $L^2(\pM,E_\pM)$, we conclude from \eqref{E:locH1} that 
\[
	\|u\|^2_{H^1_\DD(Z_r,E)} \ = \ 
	\sum_{j=-\infty}^\infty\, \Big(\,
	(1+\lambda_j^2)\,\|b_j\|^2_{L^2([0,r))}+ \|b_j'\|^2_{L^2([0,r))}\,\Big).
\]
Since $\|u\|^2_{H^1_\DD(Z_r,E)}\le1$, for all $j\in \ZZ$
\[
	(1+\lambda_j^2)\,\|b_j\|^2_{L^2([0,r))}\,+\, \|b_j'\|^2_{L^2([0,r))} \ \le \ 1.
\]
Hence, 
\begin{align}
	\|b_j\|^2_{L^2([0,r))}+ \|b_j'\|^2_{L^2([0,r))} \ &\le \ 1 
	\quad\Longrightarrow\quad
	b_j\in \oB', \qquad \text{for all }\ j\in \ZZ;\label{E:bjinB}\\
	\sum_{|j|>N}\|b_j\|^2_{L^2([0,r))}\ &< \ \frac{\varepsilon^2}8,\label{E:bj<}
\end{align}
where in the second inequality we use \eqref{E:Lambda}.

From \eqref{E:bjinB} we conclude that for every $j\in \ZZ$, there exists $a_j\in X_\varepsilon$ such that 
\[
	\|b_j-a_j\|_{L^2([0,r))} \ \le \ \frac{\varepsilon}{\sqrt{16N+8}}.
\]
Hence,
\begin{equation}\label{E:aj-bj}
	\sum_{j=-N}^N\,\|b_j-a_j\|^2_{L^2([0,r))} \ \le \ 
	(2N+1)\,\frac{\varepsilon^2}{16N+8} \ = \ \frac{\varepsilon^2}{8}.
\end{equation}
Set $u':= \sum_{j=-N}^Na_j(t)\bfu_j\in Y_\varepsilon$. Then  from \eqref{E:bj<} and \eqref{E:aj-bj} we obtain
\begin{multline}\notag
	\|u-u'\|^2_{L^2(Z_r,E)}\ = \ 
	\big\|\sum_{|j|>N}\,b_j\bfu_j + 
	\sum_{j=-N}^N\,(b_j-a_j)\bfu_j \big\|^2_{L^2(Z_r,E)}
	\\ \le \ 
	\sum_{|j|>N}\,\|b_j\|^2_{L^2([0,r))} \ + \ 
	\sum_{j=-N}^N\,\|b_j-a_j\|^2_{L^2([0,r))}
	\ \le\ 
	\frac{\varepsilon^2}{8}\ + \ \frac{\varepsilon^2}{8}  \ = \ \frac{\varepsilon^2}4.
\end{multline}
Combining this with \eqref{E:ou-u} we obtain 
\[
	\|\ou-u'\|_{L^2(Z_r,E)} \ \le \ \|\ou-u\|_{L^2(Z_r,E)}\ + \ \|u-u'\|_{L^2(Z_r,E)}
	\ \le \ \frac{\varepsilon}{2}\ + \ \frac{\varepsilon}{2} \ = \ 
	\varepsilon,
\] 
i.e., $\ou$ is within distance $\varepsilon$ from $Y_\varepsilon$. 
\end{proof}

\begin{lemma}\label{L:normequiv}
For all $u\in C_c^\infty(Z_r,E)$ with $P_{(0,\infty)}^\AAA(u_{\dm})=0$, we have  estimate 
\begin{equation}\label{E:eqnorms}
		\frac1{\sqrt{2}}\,\|u\|_\DD\;\le\;\|u\|_{H_\DD^1(Z_r,E)}\;\le\;\|u\|_\DD.
\end{equation}
\end{lemma}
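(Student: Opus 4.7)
My plan is to expand $\|\DD u\|^2$ using the product formula $\DD= c(\tau)(\partial_t+\AAA)$ on $Z_r$, and then compare the resulting expression with $\|u\|^2_{H^1_\DD(Z_r,E)}$; the discrepancy will be a cross term $2\operatorname{Re}(\partial_t u,\AAA u)_{L^2}$ which I will handle in two different ways to get the two inequalities.

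\emph{Setup.} Since $c(\tau)$ is a pointwise isometry of $E$, applying it to $(\partial_t+\AAA)u$ preserves $L^2$-norms, so
\[
\|\DD u\|^2_{L^2(Z_r,E)}
\ = \ \|\partial_t u\|^2_{L^2(Z_r,E)}\ + \ \|\AAA u\|^2_{L^2(Z_r,E)}\ + \ 2\operatorname{Re}(\partial_t u,\AAA u)_{L^2(Z_r,E)}.
\]
Expand $u(t,x)= \sum_j b_j(t)\bfu_j(x)$ in the orthonormal eigenbasis $\{\bfu_j\}$ of $\AAA$, so that $\partial_t u= \sum_j b_j'(t)\bfu_j$ and $\AAA u = \sum_j \lambda_j b_j(t)\bfu_j$. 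Since the $\bfu_j$ are orthonormal in $L^2(\pM,E_\pM)$,
\[
(\partial_t u,\AAA u)_{L^2(Z_r,E)}\ = \ \sum_j \lambda_j\,\int_0^r \overline{b_j'(t)}\,b_j(t)\,dt,
\]
and $u$ has compact support in $Z_r$, so $b_j(r)=0$. Integration by parts on each term gives
\[
2\operatorname{Re}\int_0^r\overline{b_j'(t)}b_j(t)\,dt\ = \ \int_0^r\partial_t|b_j(t)|^2\,dt\ = \ -|b_j(0)|^2.
\]
Therefore the cross term is the boundary contribution
\[
2\operatorname{Re}(\partial_t u,\AAA u)_{L^2(Z_r,E)}\ = \ -\,\sum_j\lambda_j\,|b_j(0)|^2.
\]

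\emph{Upper estimate on $\|u\|_{H^1_\DD}$.} The hypothesis $P^\AAA_{(0,\infty)}(u_\pM)=0$ forces $b_j(0)=0$ whenever $\lambda_j>0$. Hence the sum above reduces to $-\sum_{\lambda_j\le 0}\lambda_j|b_j(0)|^2$, which is nonnegative. Consequently
\[
\|\DD u\|^2_{L^2}\ \ge\ \|\partial_t u\|^2_{L^2}\ + \ \|\AAA u\|^2_{L^2},
\]
and adding $\|u\|^2_{L^2}$ to both sides yields $\|u\|_{H^1_\DD(Z_r,E)}\le \|u\|_\DD$, which is the right-hand inequality.

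\emph{Lower estimate on $\|u\|_{H^1_\DD}$.} For the other direction I do \emph{not} use the boundary condition; instead I apply the elementary bound $2|\operatorname{Re}(\partial_t u,\AAA u)_{L^2}|\le \|\partial_t u\|^2_{L^2}+\|\AAA u\|^2_{L^2}$ (Cauchy--Schwarz followed by $2ab\le a^2+b^2$). This gives
\[
\|\DD u\|^2_{L^2}\ \le\ 2\bigl(\|\partial_t u\|^2_{L^2}+\|\AAA u\|^2_{L^2}\bigr),
\]
so
\[
\|u\|^2_\DD\ =\ \|u\|^2_{L^2}+\|\DD u\|^2_{L^2}\ \le\ 2\bigl(\|u\|^2_{L^2}+\|\partial_t u\|^2_{L^2}+\|\AAA u\|^2_{L^2}\bigr)\ =\ 2\,\|u\|^2_{H^1_\DD(Z_r,E)},
\]
which is exactly $\tfrac{1}{\sqrt{2}}\|u\|_\DD\le \|u\|_{H^1_\DD(Z_r,E)}$.

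The only non-routine step is recognizing the cross term as a boundary integral whose sign is controlled precisely by the spectral half-space condition $P^\AAA_{(0,\infty)}(u_\pM)=0$; once this is in place, both inequalities follow by elementary manipulations. Note that the right inequality crucially uses the boundary condition, while the left inequality holds for all compactly supported smooth sections on $Z_r$.
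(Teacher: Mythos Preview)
Your proof is correct and follows essentially the same approach as the paper: both expand $u$ in the eigenbasis of $\AAA$, identify the cross term $2\operatorname{Re}(\partial_t u,\AAA u)$ as the boundary expression $-\sum_j\lambda_j|b_j(0)|^2$ via integration by parts, use the spectral condition $P^\AAA_{(0,\infty)}(u_\pM)=0$ to make this nonnegative for the right-hand inequality, and use Cauchy--Schwarz (without the boundary condition) for the left-hand inequality. Your observation that the left inequality holds for all $u\in C_c^\infty(Z_r,E)$ is also noted in the paper.
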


\begin{proof}
Since $\DD=c(\tau)(\partial_t+\AAA)$ on $Z_r$, we obtain
\[
   \|u\|_\DD^2 \ \le \ 
   \|u\|_{L^2(Z_r,E)}^2\ +\ 2\,\big(\,\|\p_tu\|_{L^2(Z_r,E)}^2 \ + \ \|\AAA u\|_{L^2(Z_r,E)}^2\,\big)
   \ \le\ 2\,\|u\|_{H_\DD^1(Z_r,E)}^2,
\]
for all $u\in C_c^\infty(Z_r,E)$. This proves the first inequality in \eqref{E:eqnorms}.

Suppose that $u\in C_c^\infty(Z_r,E)$ with $P_{(0,\infty)}^\AAA(u_{\dm})=0$. We want to show the converse inequality.

We can write $u=\sum_ja_j(t)\bfu_j$. Then $a_j(r)=0$ for all $j$ and $a_j(0)=0$ for all $j$ such that $\lambda_j>0$. The latter condition means that
\[
\sum_j\lambda_j\,|a_j(0)|^2\;\le\;0.
\]
Then
\begin{equation}\label{E:normequiv}
\begin{aligned}
\|\DD u\|_{L^2(Z_r,E)}^2
&=\sum_j\int_0^r|a_j'(t)+a_j(t)\lambda_j|^2dt\\
&=\sum_j\Big(\int_0^r|a_j'(t)|^2dt+\lambda_j^2\int_0^r|a_j(t)|^2dt+\lambda_j\int_0^r(a_j'(t)\bar{a_j}(t)+a_j(t)\bar{a_j'}(t))dt\Big)\\
&=\sum_j\Big(\int_0^r|a_j'(t)|^2dt+\lambda_j^2\int_0^r|a_j(t)|^2dt+\lambda_j\int_0^r\frac{d}{dt}|a_j(t)|^2dt\Big)\\
&=\sum_j\Big(\int_0^r|a_j'(t)|^2dt+\lambda_j^2\int_0^r|a_j(t)|^2dt+\lambda_j(|a_j(r)|^2-|a_j(0)|^2)\Big)\\
&\ge\sum_j\Big(\int_0^r|a_j'(t)|^2dt+\lambda_j^2\int_0^r|a_j(t)|^2dt\Big)\\
&=\|\partial_tu\|_{L^2(Z_r,E)}^2+\|\AAA u\|_{L^2(Z_r,E)}^2.
\end{aligned}
\end{equation}
Hence 
\begin{multline}\notag
	\|u\|_\DD^2 \ :=  \ \|u\|_{L^2(Z_r,E)}^2 + \|\DD u\|_{L^2(Z_r,E)}^2
	\\ \ge \ 
	\|u\|_{L^2(Z_r,E)}^2+\|\partial_tu\|_{L^2(Z_r,E)}^2+\|\AAA u\|_{L^2(Z_r,E)}^2
	\ =: \ \|u\|_{H_\DD^1(Z_r,E)}^2.
\end{multline}
\end{proof}

\begin{remark}\label{R:normequiv}
In particular, the two norms are equivalent on $C_{cc}^\infty(Z_r,E)$.
\end{remark}

\subsection{The trace theorem}\label{SS:trace}

The following ``trace theorem" establishes the relationship between $H_\DD^k(Z_r,E)$ and the Sobolev spaces on the boundary.

\begin{theorem}[The trace theorem]\label{T:tracethm}
For all $k\ge1$, the restriction map (or trace map) 
\[
	\cRR:\,C_c^\infty(Z_r,E)\ \to \ C_c^\infty(\dm,E_{\dm}), \qquad
	\cRR(u)\ :=\ u_{\dm}
\] 
extends to a continuous linear map
\[
\cRR:\;H_\DD^k(Z_r,E)\;\to\;H_\AAA^{k-1/2}(\dm,E_{\dm}).
\]
\end{theorem}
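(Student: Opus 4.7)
The plan is to diagonalize along the spectral basis of $\AAA$, reducing the statement to a family of scaled one-dimensional trace inequalities for scalar functions on $[0,r)$.

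First I would work with $u\in C_c^\infty(Z_r,E)$ and expand
\[
	u(t,x)\ =\ \sum_j a_j(t)\,\bfu_j(x),\qquad a_j(t)\ :=\ \big(u(t,\cdot),\bfu_j\big)_{L^2(\pM,E_\pM)}.
\]
Each $a_j$ is smooth and compactly supported in $[0,r)$. Since $\AAA\bfu_j=\lambda_j\bfu_j$ and $\{\bfu_j\}$ is an orthonormal basis, Parseval together with \eqref{E:locHk} gives
\[
	\|u\|_{H_\DD^k(Z_r,E)}^2 \ = \ \sum_j\Big((1+\lambda_j^{2k})\,\|a_j\|_{L^2([0,r))}^2\ +\ \|a_j^{(k)}\|_{L^2([0,r))}^2\Big),
\]
while the boundary norm \eqref{E:Sobnorm} of $u_\pM=\sum_j a_j(0)\bfu_j$ reads
\[
	\|u_\pM\|_{H_\AAA^{k-1/2}(\pM,E_\pM)}^2\ =\ \sum_j(1+\lambda_j^2)^{k-1/2}\,|a_j(0)|^2.
\]
Hence it suffices to prove that for some constant $C=C(k,r)$, every $\mu\ge 1$ and every $f\in C_c^\infty([0,r))$,
\begin{equation}\label{E:scalartrace}
	\mu^{2k-1}\,|f(0)|^2\ \le\ C\,\Big(\mu^{2k}\,\|f\|_{L^2([0,r))}^2\ +\ \|f^{(k)}\|_{L^2([0,r))}^2\Big).
\end{equation}
Applying \eqref{E:scalartrace} with $\mu=(1+\lambda_j^2)^{1/2}\ge 1$ and $f=a_j$ and summing over $j$ yields $\|u_\pM\|_{H_\AAA^{k-1/2}}\le C^{1/2}\|u\|_{H_\DD^k}$, and continuity of the extension then follows from the density of $C_c^\infty(Z_r,E)$ in $H_\DD^k(Z_r,E)$ built into Definition \ref{D:H1D}.

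To establish \eqref{E:scalartrace}, I would first prove the unweighted half-line version
\[
	|g(0)|^2\ \le\ C_0\,\Big(\|g\|_{L^2([0,\infty))}^2\,+\,\|g^{(k)}\|_{L^2([0,\infty))}^2\Big)\qquad\text{for all }g\in C_c^\infty([0,\infty)),
\]
which follows by combining the one-dimensional Sobolev embedding $H^1([0,\infty))\hookrightarrow C^0([0,\infty))$ with the Gagliardo–Nirenberg interpolation $\|g'\|_{L^2}^2\le C(\|g\|_{L^2}^2+\|g^{(k)}\|_{L^2}^2)$. Then I would rescale: extend $f$ by zero to $[0,\infty)$ and set $g(s):=f(s/\mu)$. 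A direct change of variables yields $\|g\|_{L^2}^2=\mu\,\|f\|_{L^2}^2$ and $\|g^{(k)}\|_{L^2}^2=\mu^{1-2k}\,\|f^{(k)}\|_{L^2}^2$, and since $g(0)=f(0)$, multiplying the half-line estimate by $\mu^{2k-1}$ gives exactly \eqref{E:scalartrace}.

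The main obstacle is ensuring the constant in the one-dimensional trace bound is uniform in $\mu$ — in particular, that after summing in $j$ the weights line up with $(1+\lambda_j^2)^{k-1/2}$ on the left and $(1+\lambda_j^{2k})$, $1$ on the right; this is precisely what the scaling $s=\mu t$ arranges. The non-compactness of $\pM$ never enters directly, because the whole argument is carried out coefficient by coefficient in the eigenbasis of $\AAA$, and the Sobolev scale of Definition \ref{D:Sobolev} is designed exactly to make this spectral reduction possible.
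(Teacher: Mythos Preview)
Your proof is correct and follows the same overall strategy as the paper's: expand $u$ in the eigenbasis $\{\bfu_j\}$ of $\AAA$ and reduce to a scalar trace inequality for each coefficient $a_j$, with the weight $(1+\lambda_j^2)^{k-1/2}$ appearing on the boundary side.

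The difference lies in how the scalar inequality is established. The paper proves it via Fourier transform: writing $a_j(0)=\int\widehat{a_j}(\xi)\,d\xi$, applying H\"older with the weight $(1+\lambda_j^2+\xi^2)^{k}$, and computing $\int(1+\lambda_j^2+\xi^2)^{-k}\,d\xi=(1+\lambda_j^2)^{-k+1/2}\int(1+\tau^2)^{-k}\,d\tau$ via the substitution $\xi=(1+\lambda_j^2)^{1/2}\tau$; Plancherel then converts $\xi^{2k}$ back to $\|a_j^{(k)}\|_{L^2}^2$. Your route replaces the Fourier machinery by a direct scaling argument: prove the unweighted half-line trace $|g(0)|^2\le C_0\big(\|g\|_{L^2}^2+\|g^{(k)}\|_{L^2}^2\big)$ once, then rescale $g(s)=f(s/\mu)$ to produce the correct powers of $\mu$. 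The two arguments are really the same homogeneity observation seen from different angles --- the paper's substitution $\xi\mapsto(1+\lambda_j^2)^{1/2}\tau$ is the Fourier-side avatar of your dilation $t\mapsto s/\mu$ --- but your version is more elementary (no Fourier transform, no Plancherel) and sidesteps the question of how exactly $a_j$ is to be extended to all of $\RR$ for the Fourier argument to run. One small step you leave implicit: after applying your scalar bound with $\mu=(1+\lambda_j^2)^{1/2}$ you obtain $\mu^{2k}=(1+\lambda_j^2)^k$ on the right, whereas the $H_\DD^k$-norm carries $1+\lambda_j^{2k}$; the elementary inequality $(1+x)^k\le 2^{k}(1+x^k)$ closes this.
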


\begin{proof}
Let $u(t,x)=\sum_ja_j(t)\bfu_j(x)\in C_c^\infty(Z_r,E)$. Then $\cRR(u)=u_{\dm}(x)=\sum_ja_j(0)\bfu_j(x)$, and we want to show that
\begin{equation}\label{E:traceest}
	\|u_{\dm}\|_{H_\AAA^{k-1/2}(\dm,E_\pM)}^2\;\le\;
	C(k)\,\|u\|_{H_\DD^k(Z_r,E)}^2
\end{equation}
for some constant $C(k)>0$.

Applying inverse Fourier transform to $a_j(t)$ yields that
\[
	a_j(t)\;=\;\int_\RR\, e^{it\cdot\xi}\,\widehat{a_j}(\xi)\,d\xi,
\]
where $\widehat{a_j}(\xi)$ is the Fourier transform of $a_j(t)$. (Here we use normalized measure to avoid the coefficient $2\pi$.) So
\[
	a_j(0)\;=\;\int_\RR\,\widehat{a_j}(\xi)\,d\xi.
\]
By H\"{o}lder's inequality,
\begin{multline}\notag
	|a_j(0)|^2
	\ = \ 
	\Big(\int_\RR\,\widehat{a_j}(\xi)\,d\xi\Big)^2
	\ \le \ 
	\Big(\int_\RR\,|\widehat{a_j}(\xi)|\,
	  (1+\lambda_j^2+\xi^2)^{k/2}\,(1+\lambda_j^2+\xi^2)^{-k/2}\,d\xi\Big)^2\\
	\ \le \ 
	\int_\RR\,|\widehat{a_j}(\xi)|^2\,(1+\lambda_j^2+\xi^2)^kd\xi\cdot
		\int_\RR(1+\lambda_j^2+\xi^2)^{-k}\,d\xi,
\end{multline}
where $\lambda_j$ is the eigenvalue of $\AAA$ corresponding to index $j$. We do the substitution $\xi=(1+\lambda_j^2)^{1/2}\tau$ to get
\[
\int_\RR(1+\lambda_j^2+\xi^2)^{-k}d\xi=(1+\lambda_j^2)^{-k+1/2}\int_\RR(1+\tau^2)^{-k}d\tau.
\]
It's easy to see that the integral on the right hand side converges when $k\ge1$ and depends only on $k$. Therefore
\begin{equation}\label{E:normest}
\begin{aligned}
|a_j(0)|^2(1+\lambda_j^2)^{k-1/2}
&\le C_1(k)\int_\RR|\widehat{a_j}(\xi)|^2(1+\lambda_j^2+\xi^2)^kd\xi\\
&\le C(k)\int_\RR|\widehat{a_j}(\xi)|^2(1+\lambda_j^{2k}+\xi^{2k})d\xi\\
&\le C(k)\Big(\int_\RR|a_j(t)|^2dt+\int_\RR|\widehat{a_j}(\xi)|^2\xi^{2k}d\xi+\int_\RR|a_j(t)|^2\lambda_j^{2k}dt\Big),
\end{aligned}
\end{equation}
where we use Plancherel's identity from line 2 to line 3. Recall the differentiation property of Fourier transform $\widehat{(\partial_t)^ka_j(t)}(\xi)=\widehat{a_j}(\xi)\xi^k$. So again by Plancherel's identity
\[
	\int_\RR\,|\widehat{a_j}(\xi)|^2\,\xi^{2k}\,d\xi
	\ =\ 
	\int_\RR\,|\widehat{(\partial_t)^k\,a_j(t)}(\xi)|^2\,d\xi
	\ = \ \int_\RR\,|(\partial_t)^k\,a_j(t)|^2\,dt
\]
Now summing inequality \eqref{E:normest} over $j$ gives \eqref{E:traceest} and the theorem is proved.
\end{proof}

\subsection{The space $H_\DD^1(M,E)$}\label{SS:natural domain M}

Recall that the cut-off function $\chi$ is defined in \eqref{E:cut-off}. By a slight abuse of notation we also denote by $\chi$ the induced function on $M$. Define
\begin{equation}\label{E:HDM}
		H_\DD^1(M,E)\;:=\;\dom\DD_{\max}\;\cap\;
		\big\{\,u\in L^2(M,E):\,\chi u\in H_\DD^1(Z_r,E)\,\big\}.
\end{equation}
It is a Hilbert space with the $H_\DD^1$-norm
\[
	\|u\|_{H_\DD^1(M,E)}^2\;:=\;
	\|u\|_{L^2(M,E)}^2\;+\;\|\DD u\|_{L^2(M,E)}^2\;+\;\|\chi u\|_{H_\DD^1(Z_r,E)}^2.
\]
As one can see from Remark \ref{R:normequiv}, a different choice of the cut-off function $\chi$ leads to an equivalent norm.  The $H_\DD^1$-norm is stronger than the graph norm of\/ $\DD$ in the sense that it controls in addition the $H_\DD^1$-regularity near the boundary. We call it $H_\DD^1$-regularity as it depends on our concrete choice of the norm \eqref{E:locH1}, unlike the case in \cite{BaerBallmann12}, where the boundary is compact. 

Lemma \ref{L:normequiv} and Theorem \ref{T:tracethm} extend from $Z_r$ to $M$. By the definition of $H_\DD^1(M,E)$ and the fact that $\DD$ is complete, we have

\begin{lemma}\label{L:H1-regularity}
\begin{enumerate}
\item $C_c^\infty(M,E)$ is dense in $H_\DD^1(M,E)$;
\item\label{I:H1-regularity-2} $C_{cc}^\infty(M,E)$ is dense in $\{u\in H_\DD^1(M,E):u_{\dm}=0\}$.
\end{enumerate}
\end{lemma}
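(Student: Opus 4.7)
The plan is to prove (i) by combining a truncation to compact support with a local mollification, and to reduce (ii) to (i) by first pushing the support of $u$ into $\mathring M$ via a normal cutoff. The crucial input for the first part is the completeness of $\DD$ (Theorem~\ref{T:cplt}) via Lemma~\ref{L:cplt}; for the second it is a Hardy-type inequality that follows from $u_\pM=0$ and the trace theorem (Theorem~\ref{T:tracethm}).

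For (i), I would first fix a basepoint $x_0\in\pM$ and a Lipschitz cutoff $f:[0,\infty)\to[0,1]$ equal to $1$ on $[0,1]$ and vanishing on $[2,\infty)$; then $\phi_n(x):=f(d(x,x_0)/n)$ has compact support, $\phi_n\to 1$ pointwise, and $|d\phi_n|\to 0$ uniformly. Lemma~\ref{L:cplt} gives $\DD_{\max}(\phi_n u)=\sigma_\DD(d\phi_n)u+\phi_n\DD_{\max}u$, so $\phi_n u\to u$ in the graph norm; near $\pM$ the same cutoff plugged into \eqref{E:productDD} gives $\chi\phi_n u\to\chi u$ in $H_\DD^1(Z_r,E)$, because the commutator $[\AAA,\phi_n]$ is a bundle endomorphism of norm $\lesssim|d\phi_n|$. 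Once $u$ has compact support I would mollify: a partition of unity reduces the problem either to an interior chart, where standard Friedrichs mollifiers work, or to a boundary chart in $Z_r$, where the product form $\DD=c(\tau)(\partial_t+\AAA)$ separates the normal and tangential directions; a tangential Friedrichs mollifier along $\pM$ commutes with $\partial_t$ and has a commutator with $\AAA$ that vanishes strongly by the classical Friedrichs lemma, while a compactly supported mollifier on $[0,r)$ smooths the $t$-dependence, producing a $C_c^\infty(Z_r,E)$ approximation in the $H_\DD^1(Z_r)$-norm \eqref{E:locH1}.

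For (ii) I would prepend a normal cutoff. Set $\eta_n(t):=\eta(t/\varepsilon_n)$ with $\eta\in C^\infty(\RR,[0,1])$ vanishing on $(-\infty,1]$ and equal to $1$ on $[2,\infty)$ and $\varepsilon_n\downarrow 0$, extended by $1$ off $Z_r$. The only non-routine term in $\DD(\eta_n u-u)=c(\tau)\eta_n' u+(\eta_n-1)\DD u$ is $\eta_n' u$, since $|\eta_n'|\sim 1/\varepsilon_n$. Writing $u=\sum_j a_j(t)\bfu_j(x)$ on $Z_r$, the trace theorem together with $u_\pM=0$ forces $a_j(0)=0$, and Cauchy--Schwarz supplies the Hardy-type bound $|a_j(t)|^2\le t\int_0^t|a_j'|^2$. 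Summing in $j$ and integrating,
\[
	\|\eta_n' u\|_{L^2(Z_r,E)}^2\ \lesssim\ \|\partial_t u\|_{L^2([0,2\varepsilon_n]\times\pM,E)}^2\ \xrightarrow[n\to\infty]{}\ 0,
\]
while dominated convergence controls the remaining pieces of the $H_\DD^1$-norm. Thus $\eta_n u\to u$ in $H_\DD^1(M,E)$, and since $\eta_n u$ vanishes on $[0,\varepsilon_n)\times\pM$, applying (i) to $\eta_n u$ and multiplying the resulting smooth approximations by a cutoff equal to $1$ on $\{t\ge\varepsilon_n\}$ and vanishing in a smaller neighborhood of $\pM$ yields approximations in $C_{cc}^\infty(M,E)$; a diagonal extraction finishes the argument.

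The hard part is the Hardy estimate used to control $\eta_n' u$: without the trace-vanishing hypothesis, $\|\eta_n' u\|_{L^2}$ would generically be only $O(1)$, and it is exactly $a_j(0)=0$ that forces it to zero. A secondary subtlety, absent in the compact-boundary setting of \cite{BaerBallmann12}, is the interaction of the tangential mollifier with the $\AAA$-weighted part of \eqref{E:locH1}; this needs to be handled by the Friedrichs commutator argument in a form adapted to the eigensection expansion of $\AAA$, whose spectrum is only guaranteed to be discrete (Lemma~\ref{L:restrictiontobndry}).
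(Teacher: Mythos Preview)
The paper does not actually prove this lemma; it merely asserts that it follows ``by the definition of $H_\DD^1(M,E)$ and the fact that $\DD$ is complete''. Your outline is correct and supplies the omitted details, and your Hardy-type estimate for (ii) is precisely the right mechanism for absorbing the bad term $\eta_n'u$.

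One simplification is available in your step (i): the tangential Friedrichs mollifier and the commutator argument with $\AAA$ are unnecessary, because $H_\DD^1(Z_r,E)$ is \emph{defined} (Definition~\ref{D:H1D}) as the completion of $C_c^\infty(Z_r,E)$ in the norm \eqref{E:locH1}. Once $u$ has compact support, split $u=\chi u+(1-\chi)u$. The first summand lies in $H_\DD^1(Z_r,E)$ by hypothesis and is therefore tautologically a limit of sections $w_n\in C_c^\infty(Z_r,E)$ in the $H_\DD^1(Z_r)$-norm; the first inequality in \eqref{E:eqnorms}, valid for all $u\in C_c^\infty(Z_r,E)$, upgrades this to graph-norm convergence. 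The second summand is compactly supported in $\{t\ge r/3\}$, hence away from $\pM$, and ordinary interior mollification yields approximants $s_m\in C_{cc}^\infty(M,E)$ in graph norm; since $\chi s_m$ and $\chi(1-\chi)u$ are supported in $\{r/3\le t\le 2r/3\}$, the norm equivalence of Remark~\ref{R:normequiv} (extended to $\dom\DD_{\min}$ by density) gives $\chi s_m\to\chi(1-\chi)u$ in $H_\DD^1(Z_r)$ as well. Boundedness of multiplication by $\chi$ on $H_\DD^1(Z_r)$ (note $[\AAA,\chi]=0$ since $\chi$ depends only on $t$) then shows $w_n+s_m\to u$ in $H_\DD^1(M,E)$. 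This bypasses the ``secondary subtlety'' you flag at the end.
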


The following statement is an immediate consequence of Remark \ref{R:normequiv} and Lemma \ref{L:H1-regularity}.\ref{I:H1-regularity-2}.

\begin{corollary}\label{C:normequiv}
$\dom\DD_{\min}=\{u\in H_\DD^1(M,E):u_{\dm}=0\}$.
\end{corollary}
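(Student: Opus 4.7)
The plan is to verify both inclusions by showing that the two norms $\|\cdot\|_\DD$ and $\|\cdot\|_{H_\DD^1(M,E)}$ coincide (up to equivalence) on $C_{cc}^\infty(M,E)$, and then invoking Lemma~\ref{L:H1-regularity}.\ref{I:H1-regularity-2}.

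\textbf{Step 1: Equivalence of the two norms on $C_{cc}^\infty(M,E)$.} The bound $\|u\|_\DD\le\|u\|_{H_\DD^1(M,E)}$ is immediate from the definition of the $H_\DD^1$-norm. For the reverse direction, let $u\in C_{cc}^\infty(M,E)$. Then $\chi u\in C_c^\infty(Z_r,E)$ and, since $u$ vanishes on $\pM$, so does $\chi u$; in particular $P_{(0,\infty)}^\AAA\bigl((\chi u)_\pM\bigr)=0$, so Lemma~\ref{L:normequiv} gives $\|\chi u\|_{H_\DD^1(Z_r,E)}\le\|\chi u\|_\DD$. The Leibniz rule $\DD(\chi u)=\chi\DD u+c(d\chi)u$ together with the uniform boundedness of $d\chi$ yields $\|\chi u\|_\DD\le C\|u\|_\DD$ for a constant $C=C(\chi)$. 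Combining these inequalities produces a constant $C'$ with $\|u\|_{H_\DD^1(M,E)}\le C'\|u\|_\DD$.

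\textbf{Step 2: Inclusion $\dom\DD_{\min}\subseteq\{u\in H_\DD^1(M,E):u_\pM=0\}$.} Given $u\in\dom\DD_{\min}$, choose a sequence $u_n\in C_{cc}^\infty(M,E)$ with $u_n\to u$ in the graph norm. By Step~1 the sequence $\{u_n\}$ is also Cauchy in $\|\cdot\|_{H_\DD^1(M,E)}$, so it converges to some $v\in H_\DD^1(M,E)$; since $H_\DD^1$-convergence implies $L^2$-convergence, $v=u$, hence $u\in H_\DD^1(M,E)$. The trace theorem (Theorem~\ref{T:tracethm}), extended from $Z_r$ to $M$ as mentioned in Subsection~\ref{SS:natural domain M}, gives $u_\pM=\lim (u_n)_\pM=0$ in $H_\AAA^{1/2}(\pM,E_\pM)$.

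\textbf{Step 3: Inclusion $\{u\in H_\DD^1(M,E):u_\pM=0\}\subseteq\dom\DD_{\min}$.} Given such a $u$, Lemma~\ref{L:H1-regularity}.\ref{I:H1-regularity-2} produces a sequence $u_n\in C_{cc}^\infty(M,E)$ converging to $u$ in $\|\cdot\|_{H_\DD^1(M,E)}$. Since $\|\cdot\|_\DD\le\|\cdot\|_{H_\DD^1(M,E)}$, the sequence $u_n$ also converges to $u$ in the graph norm, so $u\in\dom\DD_{\min}$ by definition.

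The only mild technical point is the equivalence of norms on $C_{cc}^\infty(M,E)$ in Step~1; everything else is a direct application of already-established results. Since the argument uses only the Leibniz rule, Lemma~\ref{L:normequiv}, and the trace theorem, no new obstruction appears despite the non-compactness of $\pM$.
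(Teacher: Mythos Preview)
Your proof is correct and follows the same approach the paper indicates: the paper simply cites Remark~\ref{R:normequiv} (equivalence of the graph norm and the $H_\DD^1$-norm on $C_{cc}^\infty$) together with Lemma~\ref{L:H1-regularity}.\ref{I:H1-regularity-2}, and your three steps spell out precisely how these two ingredients yield both inclusions.
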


\subsection{Regularity of the maximal domain}\label{SS:maxdom}
We now  state the main result of this section which extends Theorem~6.7 of \cite{BaerBallmann12} to manifolds with non-compact boundary.

\begin{theorem}\label{T:maxdom}
Assume that $\DD$ is a strongly Callias-type operator. Then
\begin{enumerate}
\item $C_c^\infty(M,E)$ is dense in\/ $\dom\DD_{\max}$ with respect to the graph norm of\/ $\DD$.
\item The trace map $\cRR:\,C_c^\infty(M,E)\to C_c^\infty(\dm,E_{\dm})$ extends uniquely to a surjective bounded linear map\/ $\cRR:\,\dom\DD_{\max}\to\cHH(\AAA)$.
\item $H_\DD^1(M,E)=\{u\in\dom\DD_{\max}:\,\cRR u\in H_\AAA^{1/2}(\dm,E_{\dm})\}$.
\end{enumerate}
The corresponding statements hold for $\dom(\DD^*)_{\max}$ (with $\AAA$ replaced with $\AAA^\#$). Furthermore, for all sections $u\in\dom\DD_{\max}$ and $v\in\dom(\DD^*)_{\max}$, we have
\begin{equation}\label{E:genGreensfor}
	\big(\DD_{\max}u,v\big)_{L^2(M,E)}
	\;-\;
	\big(u,(\DD^*)_{\max}v\big)_{L^2(M,E)}\;=\;
	-\,\big(c(\tau)\cRR u,\cRR v\big)_{L^2(\dm,E_\pM)}.
\end{equation}
\end{theorem}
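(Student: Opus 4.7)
I will prove the four assertions in order, each relying on the previous; the corresponding statements for $\DD^*$ then follow by applying the same arguments to $\DD^*$, which is strongly Callias-type by Remark~\ref{R:strCalliasequiv}, with $\AAA$ replaced by $\AAA^\#$. For (i), Theorem~\ref{T:cplt} and Example~\ref{EX:Dirac} show that $\DD$ is complete, so compactly supported elements of $\dom\DD_{\max}$ are dense in the graph norm. It therefore suffices to approximate a compactly supported $u\in\dom\DD_{\max}$ by sections in $C_c^\infty(M,E)$. Since the support of $u$ meets $\pM$ only in a compact set $K\subset\pM$, I would split $u=u_0+u_1$ via a cutoff, with $u_0$ of compact interior support (handled by a standard Friedrichs mollifier in local coordinates) and $u_1$ supported in $[0,r)\times K'$ for some compact neighborhood $K'\subset\pM$ of $K$. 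On this compact cylindrical piece the argument of \cite[Theorem~6.7]{BaerBallmann12} applies essentially verbatim: mollify tangentially in a tubular neighborhood of $K'$, and apply a one-sided mollifier in the normal direction that preserves $\dom\DD_{\max}$.

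\textbf{Parts (ii) and (iii).} Lemma~\ref{L:restriction} applied to $\chi u$ gives $\|u_\pM\|_{\cHH(\AAA)}\le C\|u\|_\DD$ on $C_c^\infty(M,E)$; combined with (i) this extends uniquely to a bounded $\cRR:\dom\DD_{\max}\to\cHH(\AAA)$. For surjectivity, given $\bfu\in\cHH(\AAA)$ I choose $\bfu_n\in H^{\rm fin}(\AAA)$ with $\bfu_n\to\bfu$; Lemma~\ref{L:extension} then shows that $\{\EE\bfu_n\}$ is Cauchy in the graph norm of $\DD$ and converges to some $u\in\dom\DD_{\max}$, and $\cRR(\EE\bfu_n)=\bfu_n$ combined with continuity of $\cRR$ forces $\cRR u=\bfu$. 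For (iii), the inclusion $\subseteq$ is Theorem~\ref{T:tracethm} applied to $\chi u$. Conversely, if $\cRR u\in H_\AAA^{1/2}(\pM,E_\pM)$, one needs the refined boundedness $\EE:H_\AAA^{1/2}(\pM,E_\pM)\to H_\DD^1(Z_r,E)$; I would obtain this by a direct spectral computation from \eqref{E:extension2}, bounding each of the three terms in $\|\chi(t)\sum a_je^{-t|\lambda_j|}\bfu_j\|_{H_\DD^1(Z_r,E)}^2$ by a sum of the form $\sum|a_j|^2\cdot(\text{integral of order }|\lambda_j|^{-1})$ and collecting powers of $\lambda_j$ to recover $\|\cRR u\|_{H_\AAA^{1/2}}^2$. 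Then $w:=u-\EE(\cRR u)\in\dom\DD_{\max}$ satisfies $\cRR w=0$, so Corollary~\ref{C:normequiv} gives $w\in\dom\DD_{\min}\subset H_\DD^1(M,E)$, whence $u\in H_\DD^1(M,E)$.

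\textbf{Green's formula, and the main obstacle.} By (i) choose $u_n,v_n\in C_c^\infty(M,E)$ converging to $u$ and $v$ in the graph norms of $\DD$ and $\DD^*$ respectively. Proposition~\ref{P:Greensfor}, together with $\sigma_D(\tau)=c(\tau)$, reads
\[
  (\DD u_n,v_n)_{L^2(M,E)}-(u_n,\DD^*v_n)_{L^2(M,E)}
  \ =\ -\,\bigl(c(\tau)u_{n,\pM},v_{n,\pM}\bigr)_{L^2(\pM,E_\pM)}.
\]
The left-hand side passes to the limit by the definitions of $\DD_{\max}$ and $(\DD^*)_{\max}$. By (ii) applied to both $\DD$ and $\DD^*$, $u_{n,\pM}\to\cRR u$ in $\cHH(\AAA)$ and $v_{n,\pM}\to\cRR v$ in $\cHH(\AAA^\#)$, so continuity of the perfect pairing $\beta$ of Lemma~\ref{L:admap} forces the right-hand side to converge to $-\beta(\cRR u,\cRR v)$, which is the precise meaning of the right-hand side of \eqref{E:genGreensfor}. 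Of all these steps, the one I expect to require the most care is the refined boundedness of $\EE$ into $H_\DD^1(Z_r,E)$ used in (iii): since the spectrum of $\AAA$ is unbounded in both directions, the decay in $t$ of $e^{-t|\lambda_j|}$ is non-uniform, and the sign-sensitive split built into \eqref{E:checkH}--\eqref{E:hatH} has to be used correctly to keep the spectral sums convergent as $|\lambda_j|\to\infty$.
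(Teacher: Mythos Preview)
Your argument for (ii) and for the extended Green's formula matches the paper's exactly. Your approach to (i), however, is genuinely different: you reduce to compactly supported sections via completeness and then mollify directly (interior Friedrichs mollifier plus tangential/one-sided normal mollification near the compact piece of boundary), whereas the paper argues by doubling $M$ to a boundaryless manifold $\tilde M$, extending a section in $\dom(\DD_c)^{\rm ad}$ by zero, invoking elliptic regularity on $\tilde M$, and concluding $(\DD_c)^{\rm ad}\subset(\DD^*)_{\min}$, hence $\overline{\DD_c}=\DD_{\max}$ by duality. Your route is more hands-on and should work, but the paper's duality argument also yields, as a byproduct, the regularity Lemma~\ref{L:ellipreg} used elsewhere.

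There is a real gap in your proof of (iii). You write: ``$w:=u-\EE(\cRR u)$ satisfies $\cRR w=0$, so Corollary~\ref{C:normequiv} gives $w\in\dom\DD_{\min}$.'' But Corollary~\ref{C:normequiv} states $\dom\DD_{\min}=\{u\in H_\DD^1(M,E):u_{\pM}=0\}$; to apply it you would already need $w\in H_\DD^1(M,E)$, which is precisely what you are trying to prove. The implication you actually need, namely $\{u\in\dom\DD_{\max}:\cRR u=0\}\subset\dom\DD_{\min}$, does not follow from Corollary~\ref{C:normequiv}. It is true, and the cleanest fix within your framework is to reorder: prove the extended Green's formula \eqref{E:genGreensfor} immediately after (ii) (it uses only (i), (ii) and Lemma~\ref{L:admap}), and then observe that $\cRR w=0$ forces $(\DD_{\max}w,v)=(w,(\DD^*)_{\max}v)$ for all $v\in\dom(\DD^*)_{\max}$, i.e.\ $w\in\dom\big((\DD^*)_{\max}\big)^{\rm ad}=\dom\DD_{\min}\subset H_\DD^1(M,E)$.

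The paper sidesteps this circularity by a different decomposition: it sets $\bfv:=P^\AAA_{(0,\infty)}\cRR u$ (only the positive spectral part) and writes $u=\EE\bfv+(u-\EE\bfv)$. Then $u-\EE\bfv$ satisfies $P^\AAA_{(0,\infty)}\cRR(u-\EE\bfv)=0$, which is exactly the hypothesis of Lemma~\ref{L:normequiv}, so the graph norm controls the $H_\DD^1$-norm directly without going through $\dom\DD_{\min}$. Your full-trace subtraction and the paper's half-trace subtraction both work, but yours needs Green's formula as input while the paper's needs only Lemma~\ref{L:normequiv}.
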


\begin{remark}\label{R:maxdom}
In particular, (ii) of Theorem \ref{T:maxdom} says that $C_c^\infty(\dm,E_{\dm})$ is dense in $\cHH(\AAA)$.
\end{remark}

\begin{proof}
The proof goes along the same line as the proof of Theorem~6.7 in \cite{BaerBallmann12} but some extra care is needed because of non-compactness of the boundary. 

(i) \ Let $\tilde M$ be the double of $M$ formed by gluing two copies of $M$ along their boundaries. Then $\tilde M$ is a complete manifold without boundary. One can extend the Riemannian metric $g^M$, the Dirac bundle $E$ and the Callias-type operator $\DD$ on $M$ to a Riemannian metric $g^{\tilde M}$, a Dirac bundle $\tilde E$ and a Callias-type operator $\tilde\DD$ on $\tilde M$. Notice that now $\dom\tilde\DD_{\max}=\dom\tilde\DD_{\min}$ by \cite{GromovLawson83}.

\begin{lemma}\label{L:ellipreg}
If\/ $\tilde u\in\dom\tilde\DD_{\max}$, then\/ $u:=\tilde u|_M\in H_\DD^1(M,E)$.
\end{lemma}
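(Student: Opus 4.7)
The plan is to double both $M$ and the cut-off function $\chi$ and reduce the claim to an integration-by-parts identity on a cylinder without boundary. That $u \in \dom \DD_{\max}$ is immediate: $\tilu$ and $\tilde\DD \tilu$ lie in $L^2(\tilM, \tilE)$, so their restrictions $u$ and $\DD u$ lie in $L^2(M, E)$, and the defining distributional identity for $\DD_{\max}$ on $\mathring M$ is inherited from the one on $\tilM$. In view of the definition \eqref{E:HDM} of $H^1_\DD(M, E)$, it suffices to prove that $\chi u \in H^1_\DD(Z_r, E)$.

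Write $\tilde Z_r := (-r, r) \times \pM \subset \tilM$ for the symmetric doubled cylinder and extend $\chi$ to the function $\tilde\chi(t, x) := \chi(|t|)$ on $\tilde Z_r$, extended by zero to $\tilM$. Since $\chi \equiv 1$ on $[0, r/3]$, the function $\tilde\chi$ is in fact smooth across $\{t = 0\}$, hence lies in $C_c^\infty(\tilM)$ with $\tilde\chi|_M = \chi$ and $\supp \tilde\chi \subset \tilde Z_r$. By Lemma \ref{L:cplt}, $\tilde\chi\tilu \in \dom \tilde\DD_{\max}$. Using $\dom \tilde\DD_{\max} = \dom \tilde\DD_{\min}$, I would pick a sequence $\tilw_n \in C_c^\infty(\tilM, \tilE)$ with $\tilw_n \to \tilde\chi\tilu$ in the graph norm of $\tilde\DD$, and then choose a second cut-off $\tilde\chi_1 \in C_c^\infty(\tilde Z_r)$ with $\tilde\chi_1 \equiv 1$ on $\supp \tilde\chi$. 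Lemma \ref{L:cplt} applied to $\tilde\chi_1$ shows that the sections $\tilv_n := \tilde\chi_1\tilw_n \in C_c^\infty(\tilde Z_r, \tilE)$ still converge to $\tilde\chi\tilu$ in graph norm.

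The key step, and the one real obstacle, is to observe that on $C_c^\infty(\tilde Z_r, \tilE)$ the graph norm of $\tilde\DD$ coincides with the $H_\DD^1$ norm of \eqref{E:locH1}. Indeed, writing $\tilde\DD = c(\tau)(\partial_t + \AAA)$ with $\AAA$ formally self-adjoint on $\pM$ and independent of $t$, for $v \in C_c^\infty(\tilde Z_r, \tilE)$ one has
\[
	\|\tilde\DD v\|_{L^2}^2 \ = \ \|\partial_t v\|_{L^2}^2 \ + \ \|\AAA v\|_{L^2}^2 \ + \ 2\,\Re\,(\partial_t v, \AAA v)_{L^2},
\]
and the cross term vanishes after integrating $\frac{d}{dt}(\AAA v, v)_{L^2(\pM)}$ over $t \in (-r, r)$, since $v$ vanishes near $t = \pm r$. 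The absence of a boundary of $\tilde Z_r$ is essential at exactly this point; compare the boundary term appearing in \eqref{E:normequiv}. It follows that $\{\tilv_n\}$ is Cauchy in $H_\DD^1(\tilde Z_r, \tilE)$ with limit $\tilde\chi\tilu$, so $\tilde\chi\tilu \in H_\DD^1(\tilde Z_r, \tilE)$. Since the restriction $C_c^\infty(\tilde Z_r, \tilE) \to C_c^\infty(Z_r, E)$ is bounded in the $H_\DD^1$ norm and extends by density, we conclude $\chi u = (\tilde\chi\tilu)|_{Z_r} \in H_\DD^1(Z_r, E)$, which combined with the first step yields $u \in H_\DD^1(M, E)$.
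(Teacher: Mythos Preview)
Your proof is correct and follows essentially the same strategy as the paper: reduce to sections supported in the doubled cylinder $Z_{(-r,r)}$, use $\dom\tilde\DD_{\max}=\dom\tilde\DD_{\min}$ to approximate by smooth compactly supported sections, and show that on such sections the graph norm of $\tilde\DD$ coincides with the $H^1_\DD$-norm. The only cosmetic difference is that the paper obtains the key identity $\|\tilde\DD v\|^2=\|\partial_t v\|^2+\|\AAA v\|^2$ by writing $\tilde\DD^*\tilde\DD=-\partial_t^2+\AAA^2$ and pairing, while you expand the square directly and kill the cross term by integrating $\frac{d}{dt}(\AAA v,v)$; these are the same computation. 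Your handling of the cut-offs is in fact more explicit than the paper's ``clearly it suffices to consider $\supp\tilu\subset Z_{(-r,r)}$''.
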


\begin{proof}
Let $Z_{(-r,r)}$ be the double of $Z_r$ in $\tilde M$. Clearly, it suffices to consider the case when the support of $\tilde u$ is contained in $Z_{(-r,r)}$.
Since $\dom\tilde\DD_{\max}=\dom\tilde\DD_{\min}$, it suffices to show that if a sequence $\tilu_n\in C^\infty_c(Z_{(-r,r)},\tilE)$ converges to $\tilu$ in the graph norm of $\tilde\DD$ then $\tilu_n|_M$ converges in $H_\DD^1(M,E)$. This follows from the following estimate 
\begin{equation}\label{E:normtilD-D}
	\big\|\tilu|_M\|_{H_\DD^1(M,E)} \ \le \  \|\tilu\|_{\tilde\DD}, 
	\qquad \tilu\in C^\infty_c(Z_{(-r,r)},\tilE),
\end{equation}
which we  prove below. 

Since $\DD$ is a product on $Z_r$, we obtain from \eqref{E:productDD} that on $Z_{(-r,r)}$
\[
	\tilde\DD^*\tilde\DD\ = \ -\p_t^2+\AAA^2.
\]
Hence, on compactly supported sections $\tilu$ we have 
\[
	\big\|\tilde\DD\tilu\|^2_{L^2(\tilM,\tilE)} \ = \ 
	\big(\,\tilde\DD^*\tilde\DD\tilu,\tilu\,\big)_{L^2(\tilM,\tilE)} \ = \ 
	\|\p_t\tilu\|^2_{L^2(\tilM,\tilE)} \ + \ \|\AAA \tilu\|^2_{L^2(\tilM,\tilE)}.
\]
We conclude that 
\begin{multline}\notag
	\|\tilu\|^2_{\tilde\DD}\ := \ \|\tilu\|^2_{L^2(\tilM,\tilE)}
	\ + \  \|\tilde\DD\tilu\|^2_{L^2(\tilM,\tilE)} 
	\\ = \ \|\tilu\|^2_{L^2(\tilM,\tilE)} \ + \ 
	\|\p_t\tilu\|^2_{L^2(\tilM,\tilE)} \ + \ \|\AAA \tilu\|^2_{L^2(\tilM,\tilE)}
	\ \ge \ \big\|\tilu|_M\|_{H_\DD^1(M,E)}^2.
\end{multline}
\end{proof}

Let $\DD_c$ denote the operator $\DD$ with domain $C^\infty_c(M,E)$. Let $(\DD_{c})^\ad$ denote the adjoint of $\DD_c$ in the sense of functional analysis. Note that $(\DD_{c})^\ad\subset (\DD^*)_{\max}$, where, as usual, we  denote by $\DD^*$ the formal adjoint of $\DD$. 

Fix an arbitrary $u\in \dom(\DD_{c})^\ad$ and let $\tilu\in L^2(\tilM,\tilE)$ and $\tilv\in L^2(\tilM,\tilE)$ denote the sections whose restriction to $\tilM\backslash{}M$ are  equal to 0 and whose restriction to $M$ are equal to $u$ and $(\DD_c)^\ad u$ respectively. 

Let $\tilw\in C^\infty_c(\tilM,\tilE)$. The restriction of $w=\tilw|_M\in \dom\DD_c$. 
Since $\tilu|_{\tilM\backslash M}= \tilv|_{\tilM\backslash M}=0$ we obtain
\[
	(\tilde\DD\tilw,\tilu)_{L^2(\tilM,\tilE)} \ = \ (\DD_cw,u)_{L^2(M,E)}
	\ = \ \big(w,(\DD_c)^\ad u\big)_{L^2(M,E)} \ = \ (\tilw,\tilv)_{L^2(\tilM,\tilE)}.
\]
Hence, $\tilu$ is a weak solution of the equation $\tilde\DD^*\tilu= \tilv\in L^2(\tilM,\tilE)$. By elliptic regularity $\tilu\in H^1_{\loc}(\tilM,\tilE)$. It follows that $\tilu|_\pM= u|_\pM=0$. Also, by Lemma~\ref{L:ellipreg}, $u\in H^1_{\DD^*}(M,E)$. By Corollary~\ref{C:normequiv}, $u$ is in the domain of the minimal extension $(\DD^*)_{\min}$ of $(\DD^*)_{cc}$. Since $u$ is an arbitrary section in $\dom(\DD_{c})^\ad$, we conclude that $(\DD_c)^\ad\subset (\DD^*)_{\min}$. Hence the closure $\overline{\DD}_c$ of $\DD_c$ satisfies 
\[
	\overline{\DD}_c \ \subset \ \DD_{\max} \ = \ \big((\DD^*)_{\min}\big)^\ad
	 \ \subset \ \big((\DD_c)^\ad\big)^\ad\ =\ \overline{\DD}_c.
\]
Hence, $\overline{\DD}_c= \DD_{\max}$ as claimed in part (i)  of the theorem. 

(ii) \ By (i) $C_c^\infty(M,E)$ is dense in $\dom \DD_{\max}$. Hence, it follows from 
Lemma~\ref{L:restriction} that the extension exists and unique. To prove the  surjectivity recall that the space $H^{\rm fin}(\AAA)$, defined in \eqref{E:Hfin}, is dense in $\cHH(\AAA)$. Fix $\bfu\in \cHH(\AAA)$ and let $\bfu_i\to \bfu$ be a sequence of sections $\bfu_i\in H^{\rm fin}(\AAA)$ which converges to $\bfu$ in $\cHH(\AAA)$. Then, by Lemma~\ref{L:extension}, the sequence $\EE\bfu_i\in \dom \DD_{\max}$ is a Cauchy sequence and, hence, converges to an element $v\in \dom \DD_{\max}$. Then $\cRR v= \bfu$. 

(iii) \ The inclusion 
\[
	H_\DD^1(M,E)\ \subset\ 
	 \{u\in\dom\DD_{\max}:\,\cRR u\in H_\AAA^{1/2}(\dm,E_{\dm})\}
\]
follows directly from \eqref{E:HDM} and the Trace Theorem~\ref{T:tracethm}.

To show the opposite inclusion, choose $u\in \dom \DD_{\max}$ with $\cRR u\in H_\AAA^{1/2}(\pM,E_{\dm})$ and set $\bfv:=P^\AAA_{(0,\infty)}\cRR u$. Then 
\[
	u \ = \ \EE \bfv\ + \ (u-\EE \bfv).
\]
Using \eqref{E:extension2} we readily see that $\EE \bfv\in  H_\DD^1(M,E)$. Since $P^\AAA_{(0,\infty)}\cRR(u-\EE \bfv)= 0$ it follows from (i) and Lemma~\ref{L:normequiv}, that $u-\EE \bfv\in H_\DD^1(M,E)$. Thus $u\in H_\DD^1(M,E)$ as required. 

Finally, \eqref{E:genGreensfor} holds for $u,v\in C^\infty_c(M,E)$  by \eqref{E:Greensfor}. Since, by (i), $C^\infty_c(M,E)$ is dense in both $\dom \DD_{\max}$ and $\dom (\DD^*)_{\max}$, the equality for $u\in \dom\DD_{\max}$ and $v\in  \dom(\DD^*)_{\max}$ follows now from (i) and (ii) and Lemma~\ref{L:admap}.
\end{proof}

\section{Boundary value problems for strongly Callias-type operators}\label{S:bvp}

Moving on from last section, we study boundary value problems of a strongly Callias-type operator $\DD$ whose restriction to the boundary is $\AAA$. We introduce boundary conditions and elliptic boundary conditions for $\DD$ as certain closed subspaces of $\cHH(\AAA)$. In particular, we take a close look at an important elliptic boundary condition -- the Atiyah--Patodi--Singer boundary condition and obtain some results about it.

\subsection{Boundary conditions}\label{SS:bc}

Let $\DD$ be a strongly Callias-type operator. If $\dm=\emptyset$, then the minimal and maximal extensions of $\DD$ coincide, i.e., $\DD_{\min}=\DD_{\max}$. But when $\dm\ne\emptyset$ these two extensions are not equal. Indeed, the restrictions of elements of $\DD_{\min}$ to the boundary vanish identically by Corollary~\ref{C:normequiv}, while the restrictions of elements of $\DD_{\max}$ to the boundary form the whole space $\cHH(\AAA)$, cf. Theorem~\ref{T:maxdom}. The boundary value problems lead to closed extensions lying between $\DD_{\min}$ and $\DD_{\max}$.

\begin{definition}\label{D:bc}
A closed subspace $B\subset\cHH(\AAA)$ is called a \emph{boundary condition} for $\DD$. We will use the notations $\DD_{B,\max}$ and $\DD_B$ for the operators with the following domains
\[
\begin{aligned}
	\dom(\DD_{B,\max})&\;=\;\{u\in\dom\DD_{\max}:\cRR u\in B\},\\
	\dom\DD_B&\;=\;\{u\in H_\DD^1(M,E):\cRR u\in B\}\\
	&\;=\;
	\{u\in\dom\DD_{\max}:\cRR u\in B\cap H_\AAA^{1/2}(\dm,E_{\dm})\}.
\end{aligned}
\]
\end{definition}
We remark that if $B=\cHH(\AAA)$ then $\DD_{B,\max}= \DD_{\max}$. Also if $B=0$ then $\DD_{B,\max}= \DD_B= \DD_{\min}$.

By Theorem \ref{T:maxdom}.(ii), $\dom(\DD_{B,\max})$ is a closed subspace of $\dom\DD_{\max}$. Since the trace map extends to a bounded linear map $\cRR:H_\DD^1(M,E)\to H_\AAA^{1/2}(\dm,E_{\dm})$ and $H_\AAA^{1/2}(\dm,E_{\dm})\hookrightarrow\cHH(\AAA)$ is a continuous embedding, $\dom\DD_B$ is also a closed subspace of $H_\DD^1(M,E)$. We equip $\dom(\DD_{B,\max})$ with the graph norm of $\DD$ and $\dom\DD_B$ the $H_\DD^1$-norm.

In particular, $\DD_{B,\max}$ is a closed extension of $\DD$. Moreover, it follows immediately 
from Definition \ref{D:bc} that $B\subset H_\AAA^{1/2}(\dm,E_{\dm})$ if and only if $\DD_B=\DD_{B,\max}$. 
Thus in this case $\dom\DD_B=\dom\DD_{B,\max}$ is a complete Banach space with respect to both the $H^1_{\DD}$-norm and the graph norm. From \cite[p. 71]{ReSi1} we now obtain the following analogue of \cite[Lemma~7.3]{BaerBallmann12}:

\begin{lemma}\label{L:normequiv-bc}
Let $B$ be a boundary condition. Then $B\subset H_\AAA^{1/2}(\dm,E_{\dm})$ if and only if $\DD_B=\DD_{B,\max}$, and in this case the $H_\DD^1$-norm and graph norm of $\DD$ are equivalent on $\dom\DD_B$.
\end{lemma}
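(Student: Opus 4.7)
The plan is to deduce both implications directly from the regularity statement in Theorem~\ref{T:maxdom}, together with the surjectivity of the trace map, and then obtain the norm equivalence via the open mapping theorem. The essential tool in both directions is Theorem~\ref{T:maxdom}.(iii), which characterizes $H_\DD^1(M,E)$ inside $\dom\DD_{\max}$ precisely as the preimage of $H_\AAA^{1/2}(\pM,E_\pM)$ under the trace map $\cRR$.

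For the forward direction, assume $B\subset H_\AAA^{1/2}(\pM,E_\pM)$. From the definitions, $\dom\DD_B\subset\dom\DD_{B,\max}$ is automatic. Conversely, if $u\in\dom\DD_{B,\max}$, then $\cRR u\in B\subset H_\AAA^{1/2}(\pM,E_\pM)$, so by Theorem~\ref{T:maxdom}.(iii), $u\in H_\DD^1(M,E)$, hence $u\in\dom\DD_B$. Thus $\DD_B=\DD_{B,\max}$.

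For the converse, assume $\DD_B=\DD_{B,\max}$, and pick any $\bfu\in B$. By Theorem~\ref{T:maxdom}.(ii), the trace map $\cRR:\dom\DD_{\max}\to\cHH(\AAA)$ is surjective, so there exists $u\in\dom\DD_{\max}$ with $\cRR u=\bfu$; then $u\in\dom\DD_{B,\max}=\dom\DD_B\subset H_\DD^1(M,E)$, and applying Theorem~\ref{T:maxdom}.(iii) again we conclude $\bfu=\cRR u\in H_\AAA^{1/2}(\pM,E_\pM)$. Hence $B\subset H_\AAA^{1/2}(\pM,E_\pM)$.

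For the norm equivalence, observe that under the assumption $\DD_B=\DD_{B,\max}$ the common domain $\dom\DD_B$ carries two Banach space structures: it is complete in the graph norm (since $\DD_{B,\max}$ is closed as a restriction of $\DD_{\max}$) and complete in the $H_\DD^1$-norm (since $\dom\DD_B$ is a closed subspace of $H_\DD^1(M,E)$, via continuity of $\cRR:H_\DD^1(M,E)\to H_\AAA^{1/2}(\pM,E_\pM)$ from Theorem~\ref{T:tracethm} and closedness of $B$). The inequality $\|u\|_\DD\le\|u\|_{H_\DD^1(M,E)}$ is immediate from the definitions. The open mapping theorem (in the form of \cite[p.~71]{ReSi1}) then yields the reverse inequality up to a constant, establishing equivalence of the two norms. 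The only point requiring care is verifying that $\dom\DD_B$ is actually closed in $H_\DD^1(M,E)$, which follows since $B$ is closed in $\cHH(\AAA)$ and the restriction of $\cRR$ to $H_\DD^1(M,E)$ factors through the continuous inclusion $H_\AAA^{1/2}(\pM,E_\pM)\hookrightarrow\cHH(\AAA)$; no genuine obstacle arises.
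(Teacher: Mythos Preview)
Your proof is correct and follows essentially the same approach as the paper. The paper states that the biconditional ``follows immediately from Definition~\ref{D:bc}'' (whose second characterization of $\dom\DD_B$ already encodes Theorem~\ref{T:maxdom}.(iii)), and then invokes \cite[p.~71]{ReSi1} for the norm equivalence; you have simply unpacked these steps explicitly, including the use of surjectivity of $\cRR$ for the converse direction, which the paper leaves implicit.
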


\subsection{Adjoint boundary conditions}\label{adbc}

For any boundary condition $B$, we have $\DD_{cc}\subset\DD_{B,\max}$. Hence the $L^2$-adjoint operators satisfy
\[
(\DD_{B,\max})^{\rm ad}\;\subset\;(\DD_{cc})^{\rm ad}\;=\;(\DD^*)_{\max}.
\]
From \eqref{E:genGreensfor}, we conclude that
\[
	\dom(\DD_{B,\max})^{\rm ad} \ = \ 
	\big\{\,v\in\dom(\DD^*)_{\max}:\,
	\big(c(\tau)\cRR u,\cRR v\big)=0\mbox{ for all }u\in\dom\DD_{B,\max}\,\big\}.
\]

By Theorem~\ref{T:maxdom}.(ii),  for any $\bfu\in B$ there exists $u\in\dom(\DD_{B,\max})$ with $\cRR u=\bfu$. Therefore
\[
(\DD_{B,\max})^{\rm ad}\;=\;(\DD^*)_{B^{\rm ad},\max}
\]
with
\begin{equation}\label{E:adbc}
	B^{\rm ad}\;:=\;
	\big\{\,\bfv\in\cHH(\AAA^\#):\,
		\big(c(\tau)\bfu,\bfv\big)=0\mbox{ for all }\bfu\in B\,\big\}.
\end{equation}
By Lemma \ref{L:admap}, $B^{\rm ad}$ is a closed subspace of $\cHH(\AAA^\#)$, thus is a boundary condition for $\DD^*$.

\begin{definition}\label{D:adbc}
The space $B^{\rm ad}$, defined by \eqref{E:adbc}, is called the \emph{adjoint} boundary condition to $B$.
\end{definition}

\subsection{Elliptic boundary conditions}\label{SS:ellbc}

We adopt the same definition of elliptic boundary conditions as in \cite{BaerBallmann12} for the case of non-compact boundary:

\begin{definition}\label{D:ellbc}
A boundary condition $B$ is said to be \emph{elliptic} if $B\subset H_\AAA^{1/2}(\dm,E_{\dm})$ and $B^{\rm ad}\subset H_{\AAA^\#}^{1/2}(\dm,E_{\dm})$.
\end{definition}

\begin{remark}
One can see from Lemma \ref{L:normequiv-bc} that when $B$ is an elliptic boundary condition, $\DD_{B,\max}=\DD_B$, $(\DD^*)_{B^{\rm ad},\max}=\DD_{B^{\rm ad}}^*$ and the two norms are equivalent. Definition \ref{D:ellbc} is also equivalent to saying that $\dom\DD_B\subset H_\DD^1(M,E)$ and $\dom\DD_{B^{\rm ad}}^*\subset H_{\DD^*}^1(M,E)$.
\end{remark}

The following two examples of elliptic boundary condition are the most important to our study (compare with  Examples~7.27, 7.28 of \cite[]{BaerBallmann12}).

\begin{example}[Generalized Atiyah--Patodi--Singer boundary conditions]\label{Ex:gAPS}
For any $a\in\RR$, let
\begin{equation}\label{E:gAPS}
B\;=\;B(a)\;:=\;H_{(-\infty,a)}^{1/2}(\AAA).
\end{equation}
This is a closed subspace of $\cHH(\AAA)$. In order to show that $B$ is an elliptic boundary condition, we only need to check that $B^{\rm ad}\subset H_{\AAA^\#}^{1/2}(\dm,E_{\dm})$. By Lemma \ref{L:admap}, $c(\tau)$ maps $H_{(-\infty,a)}^{1/2}(\AAA)$ to the subspace $H_{(-a,\infty)}^{1/2}(\AAA^\#)$ of $\hHH(\AAA^\#)$. Since there is a perfect pairing between $\cHH(\AAA^\#)$ and $\hHH(\AAA^\#)$, we see that
\begin{equation}\label{E:adgAPS}
B^{\rm ad}\;=\;H_{(-\infty,-a]}^{1/2}(\AAA^\#).
\end{equation}
Therefore $B$ is an elliptic boundary condition. It is called the \emph{the generalized Atiyah--Patodi--Singer boundary conditions} (or generalized APS boundary conditions for abbreviation). In particular, $B=H_{(-\infty,0)}^{1/2}(\AAA)$ will be called the \emph{Atiyah--Patodi--Singer boundary condition} and $B=H_{(-\infty,0]}^{1/2}(\AAA)$ will be called the \emph{dual Atiyah--Patodi--Singer boundary condition}. 
\end{example}

\begin{remark}\label{R:aps-dual aps}
One can see from \eqref{E:adgAPS} that the adjoint of the APS boundary condition for $\DD$ is the dual APS boundary condition for $\DD^*$.
\end{remark}

\begin{example}[Transmission conditions]\label{Ex:transmission}
Let $M$ be a complete manifold. For simplicity, first assume that $\dm=\emptyset$. Let $N\subset M$ be a hypersurface such that cutting $M$ along $N$ we obtain a manifold $M'$ (connected or not) with two copies of $N$ as boundary. So we can write $M'=(M\setminus N)\sqcup N_1\sqcup N_2$.

Let $E\to M$ be a Dirac bundle over $M$ and $\DD:C^\infty(M,E)\to C^\infty(M,E)$ be a strongly Callias-type operator. They induce Dirac bundle $E'\to M'$ and strongly Callias-type operator $\DD':C^\infty(M',E')\to C^\infty(M',E')$ on $M'$. We assume that all structures are product near $N_1$ and $N_2$. Let $\AAA$ be the restriction of $\DD'$ to $N_1$. Then $-\AAA$ is the restriction of $\DD'$ to $N_2$ and, thus, the restriction of $\DD'$ to $\dm'$ is $\AAA'=\AAA\oplus-\AAA$. 

For $u\in H_\DD^1(M,E)$ one gets $u'\in H_{\DD'}^1(M',E')$ (by Lemma \ref{L:ellipreg}) such that $u'|_{N_1}=u'|_{N_2}$. We use this as a boundary condition for $\DD'$ on $M'$ and set
\begin{equation}\label{E:transmission}
		B \;:=\;
	\big\{(\bfu,\bfu)\in H_\AAA^{1/2}(N_1,E_{N_1})\oplus H_{-\AAA}^{1/2}(N_2,E_{N_2})\big\}.
\end{equation}

\begin{lemma}\label{L:transmission}
The subspace \eqref{E:transmission} is an elliptic boundary condition, called the {\em transmission boundary condition}.
\end{lemma}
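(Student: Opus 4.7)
The plan is to verify directly the three requirements needed for $B$ to be an elliptic boundary condition: first, that $B$ is a closed subspace of $\cHH(\AAA')$ (so it is a boundary condition in the sense of Definition~\ref{D:bc}); second, that $B\subset H_{\AAA'}^{1/2}(\pM',E'_{\pM'})$ (which is built into \eqref{E:transmission}); and third, that the adjoint boundary condition $B^{\rm ad}$ also lies in the $H^{1/2}$ part of $\cHH((\AAA')^\#)$. The identification of $B^{\rm ad}$ will come out of \eqref{E:adbc} once we take care of the sign flip between the inward unit normals on $N_1$ and $N_2$.

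First I would spectrally decompose. Fix an $L^2$-orthonormal eigenbasis $\{\bfu_j\}$ of $\AAA$ on $N$ with eigenvalues $\lambda_j$. Since $\pM'=N_1\sqcup N_2$ with restrictions $\AAA$ and $-\AAA$, the operator $\AAA'=\AAA\oplus(-\AAA)$ has eigensections $(\bfu_j,0)$ with eigenvalue $\lambda_j$ and $(0,\bfu_j)$ with eigenvalue $-\lambda_j$. For $(\bfu,\bfu)\in B$ with $\bfu=\sum_j a_j\bfu_j$, splitting each copy into the spectral parts $(-\infty,0]$ and $(0,\infty)$ of the corresponding operator gives
\[
	\|(\bfu,\bfu)\|_{\cHH(\AAA')}^2
	\ = \ \sum_j |a_j|^2\,\bigl[(1+\lambda_j^2)^{1/2}+(1+\lambda_j^2)^{-1/2}\bigr],
\]
because each index $j$ contributes an $H^{1/2}$-term from one copy of $N$ and an $H^{-1/2}$-term from the other. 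Since $(1+\lambda_j^2)^{1/2}\le (1+\lambda_j^2)^{1/2}+(1+\lambda_j^2)^{-1/2}\le 2(1+\lambda_j^2)^{1/2}$, the $\cHH(\AAA')$-norm restricted to $B$ is equivalent to the $H_\AAA^{1/2}(N,E_N)$-norm on $\bfu$. Consequently the map $\bfu\mapsto(\bfu,\bfu)$ identifies $B$ isometrically (up to equivalence of norms) with $H_\AAA^{1/2}(N,E_N)$, so $B$ is complete and embeds as a closed subspace of $\cHH(\AAA')$; moreover $B\subset H_{\AAA'}^{1/2}(\pM',E'_{\pM'})$ by construction.

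Next I would compute $B^{\rm ad}$ from \eqref{E:adbc}. Because $N_1$ and $N_2$ carry opposite inward normals $\tau_1$ and $\tau_2=-\tau_1$, the Clifford actions satisfy $c(\tau_2)=-c(\tau_1)$, and for $(\bfu,\bfu)\in B$ and $(\bfv_1,\bfv_2)\in \cHH((\AAA')^\#)$ we have
\[
	\bigl(c(\tau)(\bfu,\bfu),(\bfv_1,\bfv_2)\bigr)
	\ = \ \bigl(c(\tau_1)\bfu,\bfv_1\bigr)_{N}\ -\ \bigl(c(\tau_1)\bfu,\bfv_2\bigr)_{N}
	\ = \ \bigl(c(\tau_1)\bfu,\bfv_1-\bfv_2\bigr)_{N}.
\]
Vanishing of this pairing for every $\bfu$ in the dense subspace $H^{\rm fin}(\AAA)\subset H_\AAA^{1/2}(N,E_N)$ forces $\bfv_1=\bfv_2$ in $\cHH(\AAA^\#)$. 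Thus $B^{\rm ad}$ consists of the diagonal elements $(\bfv,\bfv)\in \cHH(\AAA^\#)\oplus \cHH(-\AAA^\#)=\cHH((\AAA')^\#)$. Applying the same spectral calculation to the operator $(\AAA')^\#$ (whose eigenvalues are $\{-\lambda_j\}$ and $\{\lambda_j\}$ by Lemma~\ref{L:admap}) shows that the restriction of the $\cHH((\AAA')^\#)$-norm to this diagonal is equivalent to the $H_{\AAA^\#}^{1/2}(N,E_N)$-norm on $\bfv$. In particular $B^{\rm ad}\subset H_{(\AAA')^\#}^{1/2}(\pM',E'_{\pM'})$, completing the verification of Definition~\ref{D:ellbc}.

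The only point that requires care is the bookkeeping with the normal vectors and the resulting sign in the adjoint pairing; the rest of the argument is a direct consequence of the spectral decomposition and the observation that the hybrid norm, when restricted to the diagonal subspace, automatically combines $H^{1/2}$ and $H^{-1/2}$ contributions to recover the $H^{1/2}$-norm. The extension to nonempty $\pM$ is straightforward: one simply treats $\pM'=\pM\sqcup N_1\sqcup N_2$ and imposes the original boundary condition on $\pM$ together with the transmission condition on $N_1\sqcup N_2$, and the argument above applies verbatim on the $N_1\sqcup N_2$ summand.
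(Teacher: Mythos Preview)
Your proof is correct and follows the same strategy as the paper: both show that on the diagonal subspace the $\cHH(\AAA')$-norm is equivalent to the $H^{1/2}$-norm (you via an explicit eigenbasis expansion, the paper via the decomposition $\bfu=\bfu^-+\bfu^+$), and then repeat the computation for $B^{\rm ad}$. One small discrepancy worth flagging: your sign analysis $c(\tau_2)=-c(\tau_1)$ yields $B^{\rm ad}=\{(\bfv,\bfv)\}$, whereas the paper records $B^{\rm ad}=\{(\bfv,-\bfv)\}$; your bookkeeping looks correct to me (the adjoint of the transmission condition for $\DD$ should simply be the transmission condition for $\DD^*$, hence again diagonal), and in any case the sign is immaterial to the ellipticity conclusion since the spectral argument on the (anti)diagonal is insensitive to it.
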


\begin{proof}

First we show that $B$ is a boundary condition, i.e. is a closed subspace of $\cHH(\AAA')$. Clearly $B$ is a closed subspace of $H_{\AAA'}^{1/2}(\dm',E_{\dm'})$. Thus it suffices to show that the $H_{\AAA'}^{1/2}$-norm and $\cHH(\AAA')$-norm are equivalent on $B$. Since any two norms are equivalent on the finite-dimensional eigenspace of $\AAA'$ associated to eigenvalue 0, we may assume that 0 is not in the spectrum of $\AAA'$. Write
\[
	\bfu \;=\; 
	P_{(-\infty,0)}^\AAA\bfu+P_{(0,\infty)}^\AAA\bfu
	\;=:\; \bfu^-+\bfu^+.
\]
Notice that $P_I^{\AAA'}=P_I^\AAA\oplus P_I^{-\AAA}=P_I^\AAA\oplus P_{-I}^\AAA$ for any subset $I\subset\RR$. We have
\[
P_{(-\infty,0)}^{\AAA'}(\bfu,\bfu)\;=\;(\bfu^-,\bfu^+),\quad P_{(0,\infty)}^{\AAA'}(\bfu,\bfu)\;=\;(\bfu^+,\bfu^-).
\]
Notice also that 
\[
	\|\bfu^+\|_{H_{\AAA}^{\pm1/2}(N_1)}\ = \ \|\bfu^+\|_{H_{-\AAA}^{\pm1/2}(N_2)}
\]
and similar equality holds for $\bfu^-$. It follows that 
\[
	\|(\bfu,\bfu)\|_{H_{\AAA'}^{\pm1/2}(\dm')}^2	 \ = \ 
	2\,\|\bfu\|_{H_\AAA^{\pm1/2}(N_1)}^2
	\ = \ 2\,\|\bfu\|_{H_{-\AAA}^{\pm1/2}(N_2)}^2.
\]
Using the above equations we get
\[
\begin{aligned}
\|(\bfu,\bfu)\|_{\cHH(\AAA')}^2&\;=\;\|(\bfu^-,\bfu^+)\|_{H_{\AAA'}^{1/2}(\dm')}^2+\|(\bfu^+,\bfu^-)\|_{H_{\AAA'}^{-1/2}(\dm')}^2\\
&\;=\;\|\bfu^-\|_{H_\AAA^{1/2}(N_1)}^2+\|\bfu^+\|_{H_{-\AAA}^{1/2}(N_2)}^2+\|\bfu^+\|_{H_\AAA^{-1/2}(N_1)}^2+\|\bfu^-\|_{H_{-\AAA}^{-1/2}(N_2)}^2\\
&\;=\;\|\bfu\|_{H_\AAA^{1/2}(N_1)}^2+\|\bfu\|_{H_\AAA^{-1/2}(N_1)}^2\\
&\;=\;\frac{1}{2}\Big(\|(\bfu,\bfu)\|_{H_{\AAA'}^{1/2}(\dm')}^2+\|(\bfu,\bfu)\|_{H_{\AAA'}^{-1/2}(\dm')}^2\Big)\\
&\;\ge\;\frac{1}{2}\|(\bfu,\bfu)\|_{H_{\AAA'}^{1/2}(\dm')}^2.
\end{aligned}
\]
The other direction of inequality is trivial. So $B$ is also closed in $\cHH(\AAA')$ and hence is a boundary condition.

In order to show that $B$ is an elliptic boundary condition we need to prove that $B^{\rm ad}\subset\;H_{\AAA^{\prime\#}}^{1/2}(\dm',E_{\dm'})$. Let $\tau'$ be the inward unit normal vector to the boundary of $M'$. Then $\tau'=\tau\oplus(-\tau)$, where $\tau$ is the inward unit normal vector to the boundary component $N_1$. It is easy to see that
\[
	B^{\rm ad}\;=\;
	\big\{\,(\bfv,\bfv) \ \in \
	   H_{\AAA^\#}^{-1/2}(N_1,E_{N_1})\oplus H_{-\AAA^\#}^{-1/2}(N_2,E_{N_2})\,\big\}
	\;\cap\;\cHH(\AAA^{\prime\#}).
\]
Again by decomposing $\bfv$ in terms of $\bfv^-$ and $\bfv^+$ like above, one can get that $\bfv\in H_{\AAA^\#}^{1/2}(N_1,E_{N_1})$. Therefore
\begin{equation}\label{E:adtransmission}
	B^{\rm ad}\;=\;
	\big\{\,(\bfv,\bfv)\in H_{\AAA^\#}^{1/2}(N_1,E_{N_1})\oplus 
		H_{-\AAA^\#}^{1/2}(N_2,E_{N_2})\,\big\}
		\;\subset\;H_{\AAA^{'\#}}^{1/2}(\dm',E_{\dm'}).
\end{equation}
Therefore $B$ is an elliptic boundary condition. In addition, if $\DD=\DD^*$, then $B=B^{\rm ad}$ and $\DD'_B$ is a self-adjoint operator.
\end{proof}

If $M$ has nonempty boundary and $N$ is disjoint from $\dm$, we assume that an elliptic boundary condition is posed for $\dm$. Then one can apply the same arguments as above to pose the transmission condition for $N_1\sqcup N_2$ and keep the original condition for $\dm$.
\end{example}

\section{Index theory for strongly Callias-type operators}\label{S:index}

In this section we show that an elliptic boundary value problem for a strongly Callias-type operator is Fredholm. As two typical examples, the indexes of APS  and transmission boundary value problems are interesting and are used to prove the splitting theorem, which allows to compute the index by cutting and pasting.

\subsection{Fredholmness}\label{SS:fred}

Let $\DD:C^\infty(M,E)\to C^\infty(M,E)$ be a strongly Callias-type operator. The growth assumption of the Callias potential guarantees that $\DD$ is \emph{invertible at infinity}.

\begin{lemma}\label{L:invatinf}
A strongly Callias-type operator $\DD:C^\infty(M,E)\to C^\infty(M,E)$ is \emph{invertible at infinity} (or \emph{coercive at infinity}). Namely, there exist a constant $C>0$ and a compact set $K\subset M$ such that
\begin{equation}\label{E:invatinf}
	\|\DD u\|_{L^2(M,E)}\;\ge\;C\,\|u\|_{L^2(M,E)},
\end{equation}
for all $u\in C_{cc}^\infty(M,E)$ with ${\rm supp}(u)\cap K=\emptyset$.
\end{lemma}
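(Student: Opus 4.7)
The plan is to exploit the Bochner-Weitzenb\"ock-type identity \eqref{E:Calliaseq} for $\DD^*\DD$, pair it against compactly supported sections $u\in C_{cc}^\infty(M,E)$, and then invoke condition (ii) of Definition~\ref{D:strCallias}.

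First, since $u$ has compact support in the interior $\mathring M$, Green's formula \eqref{E:Greensfor} produces no boundary contribution. Using $D^*=D$ and the self-adjointness of $\Phi$, I would compute
\[
\|\DD u\|_{L^2(M,E)}^2 \ = \ (\DD^*\DD u,u)_{L^2(M,E)}
\ = \ \|Du\|_{L^2(M,E)}^2 \, + \, \int_M\!\Big(\langle\Phi^2 u,u\rangle + i\langle[D,\Phi]u,u\rangle\Big)\,dV.
\]
The first term is non-negative and may be discarded. Pointwise, since $|[D,\Phi](x)|$ denotes the operator norm of the self-adjoint endomorphism $i[D,\Phi](x)$, one has $|\langle i[D,\Phi](x)u,u\rangle|\le |[D,\Phi](x)|\cdot |u(x)|^2$.

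Next, fix any constant, say $R=1$, and let $K:=K_R$ be an $R$-essential support of $\DD$ as provided by Definition~\ref{D:strCallias}(ii). For every $x\in M\setminus K$, the hypothesis $\Phi^2(x)-|[D,\Phi](x)|\ge R$ is to be read as the operator inequality $\Phi^2(x)\ge\bigl(R+|[D,\Phi](x)|\bigr)\cdot\id_{E_x}$, so that
\[
\langle \Phi^2(x)u,u\rangle + i\langle[D,\Phi](x)u,u\rangle
\ \ge\ \bigl(R+|[D,\Phi](x)|\bigr)|u(x)|^2 - |[D,\Phi](x)|\cdot|u(x)|^2
\ =\ R\,|u(x)|^2.
\]

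Finally, if $u\in C_{cc}^\infty(M,E)$ satisfies $\supp(u)\cap K=\emptyset$, integrating the pointwise estimate over $M$ yields $\|\DD u\|_{L^2(M,E)}^2\ge R\,\|u\|_{L^2(M,E)}^2$, which gives \eqref{E:invatinf} with $C=\sqrt{R}=1$. There is no real obstacle here; the only point requiring care is the correct interpretation of the scalar quantity $|[D,\Phi](x)|$ in Definition~\ref{D:strCallias} as the operator norm, which makes the pointwise cancellation between the Callias potential term and the commutator term go through cleanly.
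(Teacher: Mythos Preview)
Your proof is correct and follows essentially the same approach as the paper's own proof: both use the identity $\DD^*\DD = D^2 + \Phi^2 + i[D,\Phi]$, compute $\|\DD u\|^2 = (\DD^*\DD u,u)$ for $u\in C_{cc}^\infty(M,E)$, discard the non-negative term $\|Du\|^2$, and invoke the strong Callias condition on an $R$-essential support to obtain the lower bound $R\,\|u\|^2$. Your version is simply more explicit about the pointwise estimate and the interpretation of $|[D,\Phi](x)|$ as an operator norm.
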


\begin{remark}\label{R:invatinf}
Note that this property is independent of the boundary condition of $\DD$.
\end{remark}

\begin{proof}
By Definition \ref{D:strCallias}, for a fixed $R>0$, one can find an $R$-essential support $K_R\subset M$ for $\DD$, so that
\begin{multline}\notag
	\|\DD u\|_{L^2(M,E)}^2\;=\;
	(\DD u,\DD u)_{L^2(M,E)}\; = \; (\DD^*\DD u,u)_{L^2(M,E)} \\
	\;=\;(D^2u,u)_{L^2(M,E)}\;+\;\big((\Phi^2+i[D,\Phi])u,u\big)_{L^2(M,E)} \\
	\;\ge\;
	\|Du\|_{L^2(M,E)}^2\;+\;R\,\|u\|_{L^2(M,E)}^2 \;\ge\;R\,\|u\|_{L^2(M,E)}^2
\end{multline}
for all $u\in C_{cc}^\infty(M,E)$ with support outside $K_R$.
\end{proof}

Recall that, for $\dm=\emptyset$, a first-order essentially self-adjoint elliptic operator which is invertible at infinity is Fredholm (cf. \cite[Theorem 2.1]{Anghel93}). If $\dm\ne\emptyset$ is compact, an analogous result (with elliptic boundary condition) is proven in \cite[Theorem 8.5, Corollary 8.6]{BaerBallmann12}. We now generalize the result of \cite{BaerBallmann12} to the case of non-compact boundary

\begin{theorem}\label{T:Fred}
Let $\DD_B:\dom\DD_B\to L^2(M,E)$ be a strongly Callias-type operator with elliptic boundary condition. Then $\DD_B$ is a Fredholm operator.
\end{theorem}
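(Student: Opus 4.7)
My plan is to follow the argument of B\"ar--Ballmann \cite{BaerBallmann12} for compact boundary, using Lemma~\ref{L:comHDtoL2} in place of Rellich near $\pM$ and Lemma~\ref{L:restrictiontobndry} to supply the spectral control needed at the noncompact infinity of $\pM$. First I would reduce the problem to proving that $\DD_B$ has finite-dimensional kernel and closed range. A computation parallel to Subsection~\ref{adbc}, using Theorem~\ref{T:maxdom}(ii) and Lemma~\ref{L:admap}, identifies the Hilbert-space adjoint $(\DD_B)^{\rm ad}$ with $(\DD^*)_{B^{\rm ad}}$. Since $B^{\rm ad}$ is elliptic for $\DD^*$ by Definition~\ref{D:ellbc} and $\DD^*$ is again strongly Callias-type, the same semi-Fredholm statement applied to $(\DD^*)_{B^{\rm ad}}$ furnishes the finite-dimensional cokernel and thus Fredholmness.

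Semi-Fredholmness of $\DD_B$ is in turn reduced to a G\aa{}rding-type inequality
\[
	\|u\|_{H_\DD^1(M,E)}\ \le\ C\bigl(\|\DD u\|_{L^2(M,E)} + \|u\|_{L^2(K',E)}\bigr),
	\qquad u\in \dom \DD_B,
\]
for a suitable compact subset $K'\subset M$. Here Lemma~\ref{L:normequiv-bc} identifies $\dom\DD_B$ with a closed subspace of $H_\DD^1(M,E)$ carrying an equivalent norm, while the restriction $H_\DD^1(M,E)\to L^2(K',E)$ is compact by combining interior Rellich on the part of $K'$ away from $\pM$ with Lemma~\ref{L:comHDtoL2} on the part of $K'$ inside the product collar of $\pM$. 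A standard functional-analytic argument (cf.\ \cite{BaerBallmann12}) then yields finite-dimensional kernel and closed range.

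To establish the G\aa{}rding inequality I would fix a large $R$, take $K_R$ to be an $R$-essential support of $\DD$, and use Lemma~\ref{L:invatinf} together with the analogous invertibility at infinity for the self-adjoint operator $\AAA$ on $\pM$ (which follows from \eqref{E:squareAAA} and Lemma~\ref{L:restrictiontobndry}). A cutoff decomposition $u = \chi u + (1-\chi)u$, with $\chi$ smooth, compactly supported, and equal to $1$ on $K_R$ and on a neighborhood of $K_R\cap\pM$, controls $\chi u$ in $H_\DD^1$ by $\|u\|_{L^2(K',E)}$ and $\|\DD u\|_{L^2(M,E)}$ via Lemma~\ref{L:cplt} and the boundedness of $\DD$ on compact regions, while the remainder is bounded using the Weitzenb\"ock identity \eqref{E:Calliaseq}. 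The main obstacle will be that $(1-\chi)u$ need not lie in $\dom\DD_B$, since multiplication by a scalar does not in general preserve the closed subspace $B\subset\cHH(\AAA)$; I plan to handle this by expanding boundary sections in the orthonormal eigenbasis of $\AAA$ as in Subsection~\ref{SS:eigenvalues} and exploiting the spectral decay of $\AAA$ at infinity of $\pM$ so that the boundary term arising in the integration by parts (cf.\ the computation in the proof of Lemma~\ref{L:normequiv}) is controlled by $\|\AAA u\|_{L^2(Z_r,E)}$, which is already part of the $H_\DD^1$-norm.
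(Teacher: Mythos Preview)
Your overall strategy---prove semi-Fredholmness of $\DD_B$ and of $(\DD^*)_{B^{\rm ad}}$ separately, using compactness of the restriction to a suitable set together with invertibility at infinity---is exactly the paper's. The paper phrases it via the sequence criterion (a bounded sequence with $\DD u_n$ convergent has a convergent subsequence) rather than a G\aa{}rding inequality, but these are equivalent.

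The real gap is in your cutoff decomposition. You take $\chi$ compactly supported and equal to $1$ on $K_R$ and on a neighborhood of $K_R\cap\pM$. Since $\pM$ is noncompact, $(1-\chi)u$ still has nontrivial boundary values on the unbounded part of $\pM$. When you then try to apply Lemma~\ref{L:invatinf} or the Weitzenb\"ock identity \eqref{E:Calliaseq} to $(1-\chi)u$, you must integrate by parts, and a boundary term appears (compare the computation in the proof of Lemma~\ref{L:normequiv}). That boundary term is controlled in Lemma~\ref{L:normequiv} only under the APS-type condition $P_{(0,\infty)}^\AAA(u_\pM)=0$; for a \emph{general} elliptic $B$ there is no sign or smallness available, and your proposed remedy (``spectral decay of $\AAA$ at infinity'') does not supply one. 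The issue is structural, not a matter of sharpening an estimate.

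The paper avoids this entirely by using a \emph{three}-function partition $\chi_1+\chi_2+\chi_3\equiv 1$ in which $\chi_1$ is identically $1$ on the whole collar $Z_{r/3}$ (so $\chi_1$ is \emph{not} compactly supported), $\chi_2$ has compact support, and $\chi_1+\chi_2\equiv1$ on the essential support $K$. Then $\chi_3$ vanishes on all of $Z_{r/3}$, so $\chi_3 u$ has no boundary values and Lemma~\ref{L:invatinf} applies to it without any boundary correction. The piece $\chi_1 u$ lives in $H_\DD^1(Z_r,E)$ and is handled by Lemma~\ref{L:comHDtoL2}; the piece $\chi_2 u$ is handled by classical Rellich on the compact set $K'=\operatorname{supp}\chi_2$. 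Once $L^2$-convergence is obtained this way, convergence in the graph norm follows, and Lemma~\ref{L:normequiv-bc} upgrades it to $H_\DD^1$. If you replace your two-cutoff scheme by this three-cutoff one, your argument goes through.
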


\begin{proof}
A bounded linear operator $T:X\to Y$ between two Banach spaces has finite-dimensional kernel and closed image if and only if every bounded sequence $\{x_n\}$ in $X$ such that $\{Tx_n\}$ converges in $Y$ has a convergent subsequence in $X$, cf. \cite[Proposition 19.1.3]{Hormander}. We show below that both, $\DD:\dom\DD_B\to L^2(M,E)$ and $(\DD^*)_{B^\ad}:\dom(\DD^*)_{B^\ad}\to L^2(M,E)$ satisfy this property. 

We let $\{u_n\}$ be a bounded sequence in $\dom\DD_B$ such that $\DD u_n\to v\in L^2(M,E)$ and want to show that $\{u_n\}$ has a convergent subsequence in $\dom\DD_B$.

Recall that we assume that there is a neighborhood $Z_r= [0,r)\times\pM\subset M$ of the boundary such that the restriction of $\DD$ to $Z_r$ is product. For $(t,y)\in Z_r$ we set $\chi_1(t,y)= \chi(t)$ where $\chi$ is the cut-off function defined in \eqref{E:cut-off}. We set $\chi_1(x)\equiv0$ for $x\not\in Z_r$. Then $\chi_1$ is supported on $Z_{2r/3}$ and identically equal to 1 on $Z_{r/3}$. We also note that $d\chi_1$ is uniformly bounded and supported in $Z_{2r/3}$. 

Let the compact set $K\subset M$ and a constant $C>0$ be as in Lemma \ref{L:invatinf}. We choose two more  smooth cut-off functions $\chi_2,\chi_3:M\to[0,1]$ such that
\begin{itemize}
\item $K':={\rm supp}(\chi_2)$ is compact and $\chi_1+\chi_2\equiv1$ on $K$;
\item $\chi_1+\chi_2+\chi_3\equiv1$ on $M$.
\end{itemize}
As a consequence, $d\chi_3$ is uniformly bounded and ${\rm supp}(d\chi_3)\subset Z_{2r/3}\cup K'$. We denote 
\begin{equation}\label{E:C'=sup}
	\kappa\ = \ \sup \, |d\chi_3|.
\end{equation}

Lemma~\ref{L:comHDtoL2} and the classical Rellich Embedding Theorem imply that, passing to a subsequence, we can assume that the restrictions of $u_n$ to $Z_{2r/3}$ and to $K'$ are $L^2$-convergent. Then in the inequality
\begin{multline}\label{E:estimate chi12}
	\|u_n-u_m\|_{L^2(M,E)}\\
	\le\;
	\|\chi_1(u_n-u_m)\|_{L^2(M,E)} \ + \ \|\chi_2(u_n-u_m)\|_{L^2(M,E)}
	\ + \ \|\chi_3(u_n-u_m)\|_{L^2(M,E)} 
	\\ \le \ 
	\|u_n-u_m\|_{L^2(Z_{2r/3},E)} \ + \ \|u_n-u_m\|_{L^2(K',E)}
	\ + \ \|\chi_3(u_n-u_m)\|_{L^2(M,E)} 
\end{multline}
the first two terms on the right hand side converge to 0 as $n,m\to\infty$. To show that $\{u_n\}$ is a Cauchy sequence it remains to prove that the last summand converges to 0 as well. We use  Lemma \ref{L:invatinf} to get
\begin{multline}\label{E:estimate chi3}\notag
	\|\chi_3(u_n-u_m)\|_{L^2(M,E)}
	\;\le\;
	 C^{-1}\,\|\DD\chi_3(u_n-u_m)\|_{L^2(M,E)} 
	\\ \le\;
	C^{-1}\,\|c(d\chi_3)(u_n-u_m)\|_{L^2(M,E)} 
	+ \ C^{-1}\|\chi_3(\DD u_n-\DD u_m)\|_{L^2(M,E)} \\
	\;\le\;
	\kappa\, C^{-1}\,\big(\,\|u_n-u_m\|_{L^2(Z_{2r/3},E)} + \|u_n-u_m\|_{L^2(K',E)}\,\big)
	\ + \  C^{-1}\,\|\DD u_n-\DD u_m\|_{L^2(M,E)},
\end{multline}
where in the last inequality we used \eqref{E:C'=sup}. Since $\DD u_n$, $u_n|_{Z_{2r/3}}$ and $u_n|_{K'}$ are all convergent,  $\chi_3(u_n-u_m)\to 0$ in $L^2(M,E)$ as $m,n\to \infty$. Combining with \eqref{E:estimate chi12} we conclude that $\{u_n\}$ is a Cauchy sequence and, hence,   converges in $L^2(M,E)$.

Now both $\{u_n\}$ and $\{\DD u_n\}$ are convergent in $L^2(M,E)$. Hence $\{u_n\}$ converges in the graph norm of $\DD$. Since $B$ is an elliptic boundary condition, by Lemma \ref{L:normequiv-bc}, the $H_\DD^1$-norm and graph norm of $\DD$ are equivalent on $\dom\DD_B$. So we proved that $\{u_n\}$ is convergent in $\dom\DD_B$. Therefore $\DD_B$ has finite-dimensional kernel and closed image. Since $\DD^*$ is also a strongly Callias-type operator, exactly the same arguments apply to $(\DD^*)_{B^{\rm ad}}$ and we get that $\DD_B$ is Fredholm.
\end{proof}

\begin{definition}\label{D:index}
Let $\DD$ be a strongly Callias-type operator on a complete Riemannian manifold $M$ which is  product near the boundary. Let $B\subset H^{1/2}_{\AAA}(\pM,E_\pM)$ be an elliptic boundary condition for $\DD$. The integer 
\begin{equation}\label{E:def of index}
		\ind\DD_B\;:=\;\dim\ker\DD_B-\dim\ker(\DD^*)_{B^{\rm ad}}\;\in\;\ZZ
\end{equation}
is called the {\em index of the boundary value problem} $\DD_B$. 
\end{definition}

It follows directly from \eqref{E:def of index} that 
\begin{equation}\label{E:indD-indD*}
	\ind\, (\DD^*)_{B^{\rm ad}}\ = \ -\ind \DD_B.
\end{equation}

\subsection{Dependence of the index on the boundary conditions}\label{SS:dep on bndry}

We say that two closed subspaces $X_1$, $X_2$ of a Hilbert space $H$ are {\em finite rank perturbations}  of each other if there exists a finite dimensional subspace $Y\subset H$ such that $X_2\subset X_1\oplus Y$ and the quotient space $(X_1\oplus Y)/X_2$ has finite dimension. We define the {\em relative index} of $X_1$ and $X_2$ by 
\begin{equation}\label{E:rel index}
	[X_1,X_2] \ := \ \dim\, (X_1\oplus Y)/X_2\ - \ \dim Y.
\end{equation}
One easily sees that the relative index is independent of the choice of $Y$. We also note that if $X_1$ and $X_2$ are finite rank perturbations of each other, then  $X_1$ and the orthogonal complement $X_2^\perp$ of $X_2$ form a Fredholm pair in the sense of \cite[\S IV.4.1]{Kato95book} and the relative index $[X_1,X_2]$ is equal to extension of $M$ by a cylinder

The following lemma follows immediately from the definition of the relative index.

\begin{lemma}\label{L:relindex sign}
$\displaystyle [X_2,X_1]\ = \ [X_1^\perp,X_2^\perp] \ = \ -\,[X_1,X_2]$.
\end{lemma}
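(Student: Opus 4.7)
The plan is to reduce to a symmetric setup in which both subspaces are finite-codimensional in a common closed ambient space, after which the relative index takes a simple codimensional form and both identities become direct calculations. First I would reduce to the case $Y\perp X_1$: replacing $Y$ by its orthogonal projection $Y':=P_{X_1^\perp}(Y)$ preserves the sum $X_1\oplus Y = X_1\oplus Y'$, and since $X_1\cap Y=0$ the projection is injective on $Y$, so $\dim Y' = \dim Y$. Under this assumption, set $W:=X_1\oplus Y$, which is closed. Since $X_2\subset W$ with codimension $m:=\dim(W/X_2)$, I would decompose $W=X_2\oplus Z$ as an orthogonal direct sum inside $W$ with $\dim Z=m$.

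The key structural input is then the pair of orthogonal decompositions
\[
    X_1^\perp \ = \ W^\perp \oplus Y, \qquad X_2^\perp \ = \ W^\perp \oplus Z,
\]
which follow from the general fact $X_i^\perp = W^\perp\oplus(W\ominus X_i)$ for $X_i\subset W$, together with the observation that inside $W$ the orthogonal complements of $X_1$ and $X_2$ are respectively $Y$ and $Z$. In this setup the definition already gives $[X_1,X_2] = m-\dim Y = \dim Z - \dim Y$, a manifestly antisymmetric expression in the two ``complementary'' spaces.

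For the identity $[X_2,X_1] = -[X_1,X_2]$, I would use $Z$ as the auxiliary subspace for $[X_2,X_1]$: since $X_1\subset W=X_2\oplus Z$, the definition gives
\[
    [X_2,X_1] \ = \ \dim(X_2\oplus Z)/X_1 \,-\, \dim Z \ = \ \dim(W/X_1) - m \ = \ \dim Y - m \ = \ -[X_1,X_2].
\]

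For $[X_1^\perp,X_2^\perp] = -[X_1,X_2]$ I would work in the common ambient space $U:=W^\perp\oplus (Y+Z)$, which by the decompositions above contains both $X_1^\perp$ and $X_2^\perp$ with finite codimension. Taking $Y':=U\ominus X_1^\perp = (Y+Z)\ominus Y$ as witness, one checks $X_1^\perp\oplus Y' = U \supset X_2^\perp$, and then $\dim Y' = \dim(Y+Z)-\dim Y$ together with $\dim(U/X_2^\perp) = \dim\bigl((Y+Z)/Z\bigr) = \dim(Y+Z)-\dim Z$ yield
\[
    [X_1^\perp,X_2^\perp] \ = \ \bigl(\dim(Y+Z)-\dim Z\bigr) - \bigl(\dim(Y+Z)-\dim Y\bigr) \ = \ \dim Y - \dim Z \ = \ -[X_1,X_2].
\]
The main thing to get right is the pair of orthogonal decompositions of $X_1^\perp$ and $X_2^\perp$; once they are in place, the three identities are just bookkeeping in finite-dimensional subspaces.
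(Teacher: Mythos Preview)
Your argument is correct. The paper does not actually supply a proof of this lemma: it only states that the identities follow immediately from the definition of the relative index (and remarks that $[X_1,X_2]$ coincides with the index of the Fredholm pair $(X_1,X_2^\perp)$ in the sense of Kato). What you have written is precisely a clean verification of that claim from the definition, carried out by passing to a common closed ambient space $W=X_1\oplus Y$ in which both $X_1$ and $X_2$ sit with finite codimension, and then reading off the two identities from the orthogonal decompositions $X_i^\perp = W^\perp\oplus(W\ominus X_i)$. So there is nothing to compare in terms of approach; you have simply filled in the details the paper omits.

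One small cosmetic point: when you invoke $Z$ as the auxiliary subspace for $[X_2,X_1]$, and $Y'=U\ominus X_1^\perp$ for $[X_1^\perp,X_2^\perp]$, it is worth saying explicitly that $X_2\cap Z=0$ and $X_1^\perp\cap Y'=0$ hold by orthogonality, so that the direct-sum requirement in the definition of the relative index is satisfied. You clearly have this in mind, but stating it removes any doubt that the definition is being applied correctly.
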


\begin{proposition}\label{P:change of B}
Let $\DD$ be a strongly Callias-type operator on $M$ and let $B_1$ and $B_2$ be elliptic boundary conditions for $\DD$. If\/ $B_1, B_2\in H_\AAA^{1/2}(\pM,E_\pM)$ are finite rank perturbations of each other, then 
\begin{equation}\label{E:change of B}
	\ind \DD_{B_1}\ - \ \ind \DD_{B_2}\ = \ [B_1,B_2].
\end{equation}
\end{proposition}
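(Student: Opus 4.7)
The strategy is to reduce the comparison of $\DD_{B_1}$ and $\DD_{B_2}$ to two comparisons in which one boundary condition is contained in the other, by introducing the auxiliary space
\[
	B_3 \ := \ B_1 + B_2 \ \subset \ H_\AAA^{1/2}(\pM,E_\pM).
\]
Since $B_2$ is a finite rank perturbation of $B_1$, the quotient $B_3/B_1 \cong B_2/(B_1\cap B_2)$ will be finite dimensional, and similarly for $B_3/B_2$. A finite-dimensional extension of a closed subspace in a Banach space is closed, so $B_3$ is closed in $H_\AAA^{1/2}(\pM,E_\pM)$ and hence in $\cHH(\AAA)$. Together with the identity $B_3^{\ad} = B_1^{\ad}\cap B_2^{\ad}$, which follows directly from the perfect pairing in Lemma~\ref{L:admap} and the definition \eqref{E:adbc}, one gets $B_3^{\ad}\subset H_{\AAA^\#}^{1/2}(\pM,E_\pM)$, so $B_3$ is itself an elliptic boundary condition and $\DD_{B_3}$ is Fredholm by Theorem~\ref{T:Fred}.

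The core of the argument is the following monotonicity statement: if $B\subset B'$ are two elliptic boundary conditions for $\DD$ with $\dim B'/B = d <\infty$, then $\ind \DD_B = \ind \DD_{B'} - d$. To prove this, I would use Theorem~\ref{T:maxdom}(ii), together with Lemma~\ref{L:normequiv-bc} which ensures $\DD_{B',\max}= \DD_{B'}$, to conclude that the trace map $\cRR:\dom\DD_{B'}\to B'$ is a bounded surjection. Its restriction to $\dom\DD_B$ has image $B$, so the induced map $\dom\DD_{B'}/\dom\DD_B \to B'/B$ is an isomorphism; in particular $\dom\DD_B$ is a closed subspace of codimension $d$ in $\dom\DD_{B'}$ (with equivalent norms, again by Lemma~\ref{L:normequiv-bc}). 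The inclusion $i:\dom\DD_B\hookrightarrow\dom\DD_{B'}$ is therefore Fredholm of index $-d$, and since $\DD_B = \DD_{B'}\circ i$, the composition formula for Fredholm indices yields $\ind\DD_B = \ind\DD_{B'} - d$.

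Applying the monotonicity twice, with $B' = B_3$ and $B\in\{B_1,B_2\}$, gives
\[
	\ind \DD_{B_1} \ - \ \ind \DD_{B_2} \ = \ \dim(B_3/B_2) \ - \ \dim(B_3/B_1).
\]
To identify this with $[B_1,B_2]$, I pick any algebraic complement $Y$ of $B_1$ inside the finite-dimensional extension $B_3$, so that $B_1\oplus Y = B_3 \supset B_2$ and $\dim Y = \dim B_3/B_1$. Then directly from \eqref{E:rel index},
\[
	[B_1,B_2] \ = \ \dim(B_1\oplus Y)/B_2 \ - \ \dim Y \ = \ \dim(B_3/B_2) \ - \ \dim(B_3/B_1),
\]
which matches the displayed formula.

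The step I expect to be the main obstacle is the verification that $B_3 = B_1+B_2$ is a bona fide elliptic boundary condition: closedness in $\cHH(\AAA)$ and the identity $B_3^\ad = B_1^\ad\cap B_2^\ad$. This is the reason for working with the sum $B_1+B_2$ rather than the intersection $B_1\cap B_2$; the adjoint of the latter is \emph{larger} than the $B_i^\ad$ and is not a priori contained in $H_{\AAA^\#}^{1/2}(\pM,E_\pM)$, so ellipticity of $B_1\cap B_2$ cannot be extracted from the ellipticity of $B_1$ and $B_2$ alone. Once the ellipticity of $B_3$ is in place, the remainder of the proof is the standard Fredholm composition formula combined with the surjective trace theorem.
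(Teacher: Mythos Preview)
Your argument is correct and is essentially the standard one; the paper itself does not give an independent proof but refers to Theorem~8.14 of \cite{BaerBallmann12}, whose proof follows the same two-step pattern you describe: reduce to the nested case $B\subset B'$ via an intermediate boundary condition, then use that $\dom\DD_{B'}/\dom\DD_B\cong B'/B$ together with the composition formula for Fredholm indices.

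One small logical slip to fix in your write-up: from ``$B_3$ is closed in $H_\AAA^{1/2}(\pM,E_\pM)$'' you cannot deduce ``hence closed in $\cHH(\AAA)$'', since closedness in a finer topology does not imply closedness in a coarser one. The correct route is direct: $B_1$ is by hypothesis a boundary condition, hence already closed in $\cHH(\AAA)$, and $B_3$ is a finite-dimensional extension of $B_1$ inside $\cHH(\AAA)$, so it is closed there. With this adjustment your verification that $B_3$ is an elliptic boundary condition is complete, and the rest of the proof goes through as written.
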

The proof of the proposition is a verbatim repetition of the proof of Theorem~8.14 of \cite{BaerBallmann12}.

As an immediate consequence of Proposition~\ref{P:change of B} we obtain the following
\begin{corollary}\label{C:change of B}
Let $\AAA$ be the restriction of $\DD$ to $\pM$ and let  $B_0= H^{1/2}_{(-\infty,0)}(\AAA)$ and $B_1= H^{1/2}_{(-\infty,0]}(\AAA)$ be the APS and the dual APS boundary conditions respectively, cf. Example~\ref{Ex:gAPS}. Then 
\begin{equation}\label{E:change of B2}
	\ind \DD_{B_1} \ = \ \ind\DD_{B_0}\ + \ \dim\ker \AAA.
\end{equation}
More generally, let $B(a)=H_{(-\infty,a)}^{1/2}(\AAA)$ and $B(b)=H_{(-\infty,b)}^{1/2}(\AAA)$ be two generalized APS boundary conditions with $a<b$. Then
\[
	\ind \DD_{B(b)} \ = \ \ind\DD_{B(a)}\ + \ \dim L_{[a,b)}^2(\AAA).
\]
\end{corollary}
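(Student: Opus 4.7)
The plan is to derive this corollary as a direct application of Proposition~\ref{P:change of B}. The main task is to verify that the relevant boundary conditions are finite rank perturbations of each other, and to compute the relative index $[B_1,B_0]$ explicitly in terms of the spectral data of $\AAA$.

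First I would handle the generalized statement, since the APS/dual APS statement is the special case $a=0$, $b=\varepsilon$ for any sufficiently small $\varepsilon>0$ (or rather, the special case where the half-open interval consists only of the eigenvalue $0$). Set $B(a)= H^{1/2}_{(-\infty,a)}(\AAA)$ and $B(b)= H^{1/2}_{(-\infty,b)}(\AAA)$. Clearly $B(a)\subset B(b)$, and the orthogonal complement of $B(a)$ inside $B(b)$ (with respect to the $L^2$-inner product on $\pM$) is exactly the image of the spectral projector $P^\AAA_{[a,b)}$. By Lemma~\ref{L:restrictiontobndry} the spectrum of $\AAA$ is discrete, so only finitely many eigenvalues lie in the bounded interval $[a,b)$; consequently the eigenspace $L^2_{[a,b)}(\AAA)$ is finite-dimensional, and on this finite-dimensional space all Sobolev norms coincide, giving
\[
	B(b)\ = \ B(a)\,\oplus\, L^2_{[a,b)}(\AAA).
\]
Moreover both boundary conditions sit inside $H^{1/2}_\AAA(\pM,E_\pM)$ (they are defined by spectral truncation of this space) and are elliptic by Example~\ref{Ex:gAPS}.

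Now I apply the relative index definition~\eqref{E:rel index} with $X_1 = B(b)$, $X_2 = B(a)$, and $Y = 0$: the inclusion $B(a)\subset B(b)\oplus Y$ is immediate, and the quotient $B(b)/B(a)$ is isomorphic to $L^2_{[a,b)}(\AAA)$, so
\[
	[B(b),B(a)] \ = \ \dim L^2_{[a,b)}(\AAA).
\]
Proposition~\ref{P:change of B} then yields
\[
	\ind \DD_{B(b)} \ - \ \ind \DD_{B(a)} \ = \ [B(b),B(a)] \ = \ \dim L^2_{[a,b)}(\AAA),
\]
which is the second assertion of the corollary. The first assertion follows by the same argument applied to the two intervals $(-\infty,0)$ and $(-\infty,0]$: the difference of the corresponding spectral subspaces is precisely $\ker\AAA$, which is finite-dimensional since $\AAA$ has discrete spectrum.

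I do not expect a serious obstacle here: once Proposition~\ref{P:change of B} is in hand, the only nontrivial ingredient is the observation that the spectral subspace associated with a bounded interval is finite-dimensional with all Sobolev norms equivalent on it, which is an immediate consequence of Lemma~\ref{L:restrictiontobndry}. The most delicate point worth stating carefully is the sign/orientation in \eqref{E:rel index}: one must take $X_1$ to be the \emph{larger} space $B(b)$ and $X_2$ to be the \emph{smaller} one $B(a)$, so that the quotient $(X_1\oplus Y)/X_2$ is nonnegative-dimensional with $Y=0$; this matches the sign convention under which Proposition~\ref{P:change of B} reads $\ind\DD_{B_1}-\ind\DD_{B_2}= [B_1,B_2]$.
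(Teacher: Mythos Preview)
Your proposal is correct and takes essentially the same approach as the paper, which simply states that the corollary is ``an immediate consequence of Proposition~\ref{P:change of B}'' without further detail. You have supplied exactly the details that make this immediate: the discreteness of the spectrum of $\AAA$ ensures $L^2_{[a,b)}(\AAA)$ is finite-dimensional, the inclusion $B(a)\subset B(b)$ lets you take $Y=0$ in \eqref{E:rel index}, and your handling of the sign convention is correct.
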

\subsection{The splitting theorem}\label{SS:splittingthm}

We use the notation of Example \ref{Ex:transmission}.

\begin{theorem}\label{T:splitting}
Suppose $M,\DD,M',\DD'$ are as in Example \ref{Ex:transmission}. Let $B_0$ be an elliptic boundary condition on $\dm$. Let $B_1=H_{(-\infty,0)}^{1/2}(\AAA)$ and $B_2=H_{[0,\infty)}^{1/2}(\AAA)=H_{(-\infty,0]}^{1/2}(-\AAA)$ be the APS and the dual APS boundary conditions along $N_1$ and $N_2$, respectively. Then $\DD'_{B_0\oplus B_1\oplus B_2}$ is a Fredholm operator and
\[
	\ind\DD_{B_0}\;=\;\ind\DD'_{B_0\oplus B_1\oplus B_2}.
\]
\end{theorem}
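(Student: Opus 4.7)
First, I would establish Fredholmness of $\DD'_{B_0 \oplus B_1 \oplus B_2}$ by verifying that $B_0 \oplus B_1 \oplus B_2$ is an elliptic boundary condition. Since $\dm' = \dm \sqcup N_1 \sqcup N_2$ is a disjoint union, the restriction $\AAA'$ of $\DD'$ to $\dm'$ splits as $\AAA'|_{\dm} \oplus \AAA \oplus (-\AAA)$, and the spaces $H^{1/2}_{\AAA'}$ and $\cHH(\AAA')$ decompose as corresponding direct sums. Ellipticity of $B_0 \oplus B_1 \oplus B_2$ thus reduces to ellipticity of each summand: $B_0$ by hypothesis, and $B_1$, $B_2$ by Example~\ref{Ex:gAPS} together with the adjoint formula \eqref{E:adgAPS}. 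Fredholmness then follows from Theorem~\ref{T:Fred}.

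For the index identity, I would introduce the transmission boundary condition $B_T$ of Example~\ref{Ex:transmission} on $N_1 \sqcup N_2$ as an intermediary and prove the two equalities $\ind \DD_{B_0} = \ind \DD'_{B_0 \oplus B_T}$ and $\ind \DD'_{B_0 \oplus B_T} = \ind \DD'_{B_0 \oplus B_1 \oplus B_2}$ separately. The first equality is the straightforward half: the restriction map $u \mapsto (u|_{M_1}, u|_{M_2})$ is a graph-norm-preserving bijection between $\dom \DD_{B_0}$ and $\dom \DD'_{B_0 \oplus B_T}$, since the transmission condition encodes precisely the compatibility needed to glue a pair on $M'$ into an $H^1_\DD$-section on $M$ (via the doubling argument of Lemma~\ref{L:ellipreg} applied across $N$). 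Applying this reasoning to both $\DD$ and $\DD^*$, using the description \eqref{E:adtransmission} of $B_T^{\rm ad}$, identifies the respective kernels and cokernels, so the indices agree.

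For the second equality, I would construct explicit linear maps between $\ker \DD'_{B_0 \oplus B_T}$ and $\ker \DD'_{B_0 \oplus B_1 \oplus B_2}$ (and between their adjoint kernels) by spectrally decomposing boundary values and correcting with the extension map $\EE$ of \eqref{E:extension}. Given $(u_1, u_2) \in \ker \DD'_{B_0 \oplus B_T}$ with common boundary value $\bfu = \bfu_- + \bfu_+$ on $N$ (split by the spectral projections of $\AAA$), the natural candidate subtracts $\EE \bfu_+$ from $u_1$ near $N_1$ and $\EE \bfu_-$ (constructed using $-\AAA$) from $u_2$ near $N_2$, so that the modified boundary values land in $B_1$ and $B_2$ respectively. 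The hard part is that $\EE$ does not exactly solve $\DD v = 0$: it produces an error supported on $\{\chi' \neq 0\}$, so the naive subtraction leaves the kernel. Correcting this error, either by solving an auxiliary compactly-supported inhomogeneous problem on $M'$ (using the already-established Fredholmness), or by replacing the explicit construction with a homotopy through elliptic boundary conditions along which the Fredholm index is locally constant, is the technical heart of the argument; a careful accounting ensures that the finite-dimensional discrepancies between kernel and cokernel contributions cancel to give the desired identity.
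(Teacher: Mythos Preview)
Your overall architecture matches the paper's: establish ellipticity of $B_0\oplus B_1\oplus B_2$ (hence Fredholmness via Theorem~\ref{T:Fred}), identify $\ind\DD_{B_0}$ with $\ind\DD'_{B_0\oplus B_T}$ via the transmission condition, and then pass from $B_T$ to $B_1\oplus B_2$. The first two steps are exactly as in the paper.

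For the third step, however, your primary proposal---building explicit maps between the kernels by subtracting $\EE\bfu_\pm$---is not the route the paper takes, and you yourself flag the obstruction: $\EE$ is not a solution operator, so the corrected section leaves $\ker\DD'$, and repairing this via an auxiliary inhomogeneous problem would require control over cokernels that you have not set up. The paper bypasses this entirely and uses precisely the homotopy you mention only as a fallback. Concretely, it writes $(\bfu,\bfu)\in B_T$ as $(\bfu^-+\bfu^+,\bfu^++\bfu^-)$ and defines
\[
k_s(\bfu,\bfu)\;=\;\big(\bfu^-+(1-s)\bfu^+,\ \bfu^++(1-s)\bfu^-\big),
\]
giving a continuous family of isomorphisms from $B_T$ onto images $B_{1,s}\oplus B_{2,s}$ that interpolate between $B_T$ and $B_1\oplus B_2$. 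One checks each $B_{1,s}\oplus B_{2,s}$ is elliptic (the adjoint has the same shape with $\AAA$ replaced by $\AAA^\#$), lifts $k_s$ to isomorphisms of domains, and invokes local constancy of the Fredholm index along the resulting continuous family. This avoids any kernel-by-kernel bookkeeping. You should promote the homotopy from afterthought to the actual argument; the explicit $\EE$-based construction, as you suspected, does not close.
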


\begin{proof}
We assume that $\dm=\emptyset$. The proof of the general case is exactly the same, but the notation is more cumbersome.  Since $B_1\oplus B_2$ is an elliptic boundary condition for $\DD'$, the boundary value problem $\DD'_{B_1\oplus B_2}$ is Fredholm.  We need to show the index identity, which now is
\begin{equation}\label{E:splitting-1}
	\ind\DD\;=\;\ind\DD'_{B_1\oplus B_2}.
\end{equation}

Let $B$ denote the transmission condition on $\dm'$. Then, using the canonical pull-back of sections from $E$ to $E'$, we have
\[
\dom\DD\;=\;\{u\in H_{\DD'}^1(M',E'):\cRR u\in B\}\;=\;\dom\DD'_B
\]
and
\begin{equation}\label{E:splitting-2}
\ind\DD\;=\;\ind\DD'_B.
\end{equation}

We now proceed as in the proof of Theorem~8.17 of \cite{BaerBallmann12} with minor changes. The main idea is to construct a deformation of the transmission boundary condition $B$ into the APS boundary condition $B_1\oplus{}B_2$ and thus to show that $\ind \DD'_B= \ind \DD'_{B_1\oplus B_2}$.

Recall that in Example \ref{Ex:transmission}, we express any element $(\bfu,\bfu)$ of $B$ as $(\bfu^-+\bfu^+,\bfu^++\bfu^-)$, where $\bfu^-=P_{(-\infty,0)}^\AAA\bfu$ and $\bfu^+=P_{[0,\infty)}^\AAA\bfu$. Note that $\bfu^-\in B_1$ and $\bfu^+\in B_2$. For $0\le s\le1$, we define a family of boundary conditions
\[
  \begin{aligned}
	B_{1,s}\ &:= \ 
	\big\{\, \bfu^-+(1-s)\bfu^+:\, \bfu\in H^{1/2}_\AAA(N_1,E_{N_1})\,\big\};\\
	B_{2,s}\ &:= \ 
	\big\{\, \bfu^++(1-s)\bfu^-:\, \bfu\in H^{1/2}_{-\AAA}(N_2,E_{N_2})\simeq 
			H^{1/2}_\AAA(N_1,E_{N_1}) \,\big\},
 \end{aligned}
\]
and a family of isomorphisms
\[
	k_s:B\to B_{1,s}\oplus B_{2,s},\qquad 
	k_s(\bfu,\bfu):=(\bfu^-+(1-s)\bfu^+,\bfu^++(1-s)\bfu^-).
\]
Here $k_0=\id$ and $k_1$ is an isomorphism from $B$ to $B_1\oplus B_2$. One can follow the arguments of   Lemma \ref{L:transmission} to check that for each $s\in[0,1]$,
\[
B_{1,s}^{\rm ad}\oplus B_{2,s}^{\rm ad}\;=\;\big\{(\bfv^-+(1-s)\bfv^+,\bfv^++(1-s)\bfv^-)\in H_{\AAA^\#}^{1/2}(N_1,E_{N_1})\oplus H_{-\AAA^\#}^{1/2}(N_2,E_{N_2})\big\},
\]
where $\bfv^-\in H_{(-\infty,0]}^{1/2}(\AAA^\#)$ and $\bfv^+\in H_{(0,\infty)}^{1/2}(\AAA^\#)$. Thus $B_{1,s}\oplus B_{2,s}$ is an elliptic boundary condition for all $s\in[0,1]$ and we get a family of Fredholm operators $\{\DD'_{B_{1,s}\oplus B_{2,s}}\}_{0\le s\le1}$.

By definition,
\[
	(k_{s_1}-k_{s_2})(\bfu,\bfu)\;=\;(s_2-s_1)(\bfu^+,\bfu^-).
\]
Notice that $\|(\bfu^+,\bfu^-)\|_{H_{\AAA'}^{1/2}(\dm',E')}\le\|(\bfu,\bfu)\|_{H_{\AAA'}^{1/2}(\dm',E')}$. Hence, for $s_1,s_2\in[0,1]$ with $|s_1-s_2|<\varepsilon$,  the operator
\[
	k_{s_1}-k_{s_2}:\, B \to \ H_{\AAA'}^{1/2}(\dm',E')
\]
has a norm not greater than $\varepsilon$. This implies that $\{k_s\}$ is a continuous family of isomorphisms from $B$ to $H_{\AAA'}^{1/2}(\dm',E')$. The following steps are basically from \cite[Lemma 8.11, Theorem 8.12]{BaerBallmann12}. Roughly speaking, one can construct a continuous family of isomorphisms
\[
K_s\;:\;\dom\DD'_B\;\to\;\dom\DD'_{B_{1,s}\oplus B_{2,s}}.
\]
Then by composing $\DD'_{B_{1,s}\oplus B_{2,s}}$ and $K_s$, one gets a continuous family of Fredholm operators on the fixed domain $\dom\DD'_B$. The index is constant. Since $K_1$ is an isomorphism, we have
\begin{equation}\label{E:splitting-3}
\ind\DD'_B\;=\;\ind\DD'_{B_1\oplus B_2}.
\end{equation}
At last, \eqref{E:splitting-1} follows from \eqref{E:splitting-2} and \eqref{E:splitting-3}. This completes the proof.
\end{proof}

\subsection{A vanishing theorem}\label{SS:vanishing}
As a first application of the splitting theorem~\ref{T:splitting} we prove the following vanishing result.

\begin{corollary}\label{C:vanishing}
Suppose that there exists $R>0$ such that $\DD$ has an empty $R$-essential support.  Let $B_0=H_{(-\infty,0)}^{1/2}(\AAA)$ be the APS boundary condition, cf. Example~\ref{Ex:gAPS}. Then 
\begin{equation}\label{E:vanishing}
	\ind \DD_{B_0}\ = \ 0.
\end{equation}
\end{corollary}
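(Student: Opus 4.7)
The strategy is to show that both $\ker\DD_{B_0}$ and $\ker(\DD^*)_{B_0^{\rm ad}}$ vanish, from which $\ind\DD_{B_0}=0$ follows from Definition~\ref{D:index}. A crucial preliminary is that $\ker\AAA=\{0\}$ (and similarly $\ker\AAA^{\#}=\{0\}$). Indeed, for $\bfu\in\ker\AAA$, \eqref{E:squareAAA} yields (on the boundaryless $\pM$)
\[
0 \ =\ \|\AAA\bfu\|^2\ =\ \|A\bfu\|^2 \ +\ \bigl(\bfu,\,(\Phi^2+ic(\tau)[A,\Phi])\bfu\bigr).
\]
The hypothesis of empty $R$-essential support gives $\Phi^2-|[D,\Phi]|\ge R$ everywhere on $M$, and since $[A,\Phi]$ has the same pointwise operator norm as $[D,\Phi]$, the endomorphism satisfies $\Phi^2+ic(\tau)[A,\Phi]\ge R$ on $\pM$. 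Hence $R\|\bfu\|^2\le 0$ and $\bfu=0$. Consequently $B_0^{\rm ad}=H^{1/2}_{(-\infty,0)}(\AAA^{\#})$ is the APS condition for $\DD^*$.

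Next, I would attach a semi-infinite cylinder to form the complete boundaryless manifold
\[
\tilde M\ :=\ M\ \cup_{\pM}\ \bigl((-\infty,0]\times\pM\bigr),
\]
and extend the Dirac bundle, $D$, and $\Phi$ by $t$-translation invariance on the cylinder. Since $\DD$ is product near $\pM$, this produces a strongly Callias-type operator $\tilde\DD$ on $\tilde M$. The coercivity condition on the attached cylinder is $t$-independent and inherited from $\pM\subset M$, so $\tilde\DD$ also has empty $R$-essential support. Because $\partial\tilde M=\emptyset$, Lemma~\ref{L:invatinf} yields $\|\tilde\DD\tilde v\|^2\ge R\|\tilde v\|^2$ for all $\tilde v\in C^\infty_c(\tilde M,\tilde E)$ without boundary terms; by completeness (Theorem~\ref{T:cplt}, Example~\ref{EX:Dirac}) this extends to all of $\dom\tilde\DD_{\max}$, giving $\ker\tilde\DD_{\max}=\{0\}$.

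To finish, let $u\in\ker\DD_{B_0}$. By elliptic regularity for the APS condition, $u$ is smooth up to $\pM$, and on $Z_r=[0,r)\times\pM$ the equation $(\partial_t+\AAA)u=0$ together with $u|_{\pM}\in H^{1/2}_{(-\infty,0)}(\AAA)$ and $\ker\AAA=0$ forces the explicit expansion
\[
u(t,x)\ =\ \sum_{\lambda_j<0}a_j(0)\,e^{|\lambda_j|t}\,\bfu_j(x).
\]
Define $\tilde u$ on $\tilde M$ to equal $u$ on $M$ and to be given by the same formula on the attached cylinder. Then $\tilde u$ glues real-analytically in $t$ across $\pM$, satisfies $\tilde\DD\tilde u=0$, and lies in $L^2(\tilde M,\tilde E)$ because $e^{|\lambda_j|t}$ decays exponentially as $t\to-\infty$ (the nonzero spectral gap provided by $\ker\AAA=0$ guarantees convergence of the $L^2$-norm on the cylinder). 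The previous step then gives $\tilde u=0$, hence $u=0$. The identical argument with $\DD^*,\AAA^{\#},B_0^{\rm ad}$ in place of $\DD,\AAA,B_0$ gives $\ker(\DD^*)_{B_0^{\rm ad}}=\{0\}$. The main subtlety throughout is the role of $\ker\AAA=0$, which eliminates the constant-in-$t$ boundary modes that would otherwise obstruct the $L^2$-extension onto the infinite cylinder and allow spurious kernel elements.
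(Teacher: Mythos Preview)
Your argument is correct, but it takes a different route from the paper's.  Both proofs begin the same way: establish that $\ker\AAA=0$ from the empty essential support, and attach the half-cylinder $(-\infty,0]\times\pM$ to obtain a boundaryless $\tilde M$ with extended operator $\tilde\DD$ whose kernel vanishes.  After that the arguments diverge.  The paper invokes the Splitting Theorem~\ref{T:splitting}: it separately computes the index of the APS problem on the infinite half-cylinder (zero, by a direct eigenmode argument showing no $L^2$ solutions), then splits $\ind\tilde\DD=\ind\DD_{B_0}+\ind\DD'_{B_0'}$ to conclude.  You instead bypass the splitting theorem entirely by taking any $u\in\ker\DD_{B_0}$, reading off its collar expansion $u(t,\cdot)=\sum_{\lambda_j<0}a_j(0)e^{|\lambda_j|t}\bfu_j$, and continuing this formula onto the attached cylinder; the spectral gap $|\lambda_j|\ge\sqrt{R}$ ensures the extension is $L^2$, hence lies in $\ker\tilde\DD_{\max}=\{0\}$.

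Your approach is more elementary in that it does not rely on Theorem~\ref{T:splitting}, whose proof involves a nontrivial deformation of boundary conditions; on the other hand the paper's route is cleaner once that machinery is in place, and the Corollary is in fact presented there precisely as an application of the splitting theorem.  One small point worth making explicit in your write-up: the $L^2$-estimate on the attached cylinder, $\int_{-\infty}^0\sum_{\lambda_j<0}|a_j(0)|^2e^{2|\lambda_j|t}\,dt=\sum_{\lambda_j<0}|a_j(0)|^2/(2|\lambda_j|)$, is finite because the spectral gap bounds $1/|\lambda_j|$ uniformly and $\sum|a_j(0)|^2=\|u|_{\pM}\|_{L^2}^2<\infty$; you allude to this but do not spell it out.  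Also, strictly speaking you do not need smooth gluing: it suffices that $\tilde u\in L^2(\tilde M)$ agrees on both sides at $\pM$ and satisfies $\tilde\DD\tilde u=0$ distributionally away from $\pM$, which for a first-order operator forces $\tilde\DD\tilde u=0$ across the interface.
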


\begin{proof}
Since all our structures are product near $\pM$ and the $R$-essential support of $\DD$ is empty, the $R$-essential support of the restriction $\AAA:=A-ic(\tau)\Phi$\/ of\/ $\DD$ to $\pM$ is also empty. In particular, the operator $\AAA^2$ is strictly positive. It follows that 0 is not in the spectrum of $\AAA$. 

First consider the case when $M=[0,\infty)\times N$ is a cylinder and \eqref{E:productDD} holds everywhere on $M$.  In particular, this means that $\Phi(t,y)= \Phi(0,y)$ for all $t\in [0,\infty)$ and all $y\in N=\pM$. To distinguish this case from the general case, we denote the Callias-type operator on the cylinder by $\DD'$. Any $u\in \dom(\DD_{B_0}')$ can be written as 
\[
	u\ = \ \sum_{j=1}^\infty\, a_j(t)\,\bfu_j,
\]
where $\bfu_j$ is a unit eigensection of $\AAA$ with eigenvalue $\lambda_j<0$. If $\DD' u=0$ then $a_j(t)=c_je^{-\lambda_j t}$ for all $j$. It follows that $u\not\in L^2(M,E)$. In other words, there are no $L^2$-sections in the kernel of $\DD'_{B_0}$. Similarly, one proves that the kernel of $(\DD^{\prime*})_{B_0^{\rm ad}}$ is trivial. Thus 
\begin{equation}\label{E:indD'}
	\ind \DD'_{B_0}\ = \ 0.
\end{equation}

Let us return to the case of a general manifold $M$. Let 
\[
	\tilde{M}   \ := \ \big(\, (-\infty,0]\times\pM\,\big)\cup_\pM M
\]
be the extension of $M$ by a cylinder. Then $\tilde{M}$ is a complete manifold without boundary. Since all our structures are product near $\pM$ they extend naturally to $\tilde{M}$. Let $\tilde{\DD}$ be the induced strongly Callias-type operator on $\tilde{M}$. It has an empty $R$-essential support. Hence, $\tilde{\DD}^*\tilde{\DD}>0$ and $\tilde{\DD}\tilde{\DD}^*>0$. It follows that 
\begin{equation}\label{E:indtildeD}
	\ind \tilde{\DD}=0.
\end{equation} 
Notice that the restriction of $\tilde{\DD}$ to the cylinder is the operator $\DD'$ whose $R$-essential support is empty and whose restriction to the boundary is $-\AAA$.
Let  
\[
	B_0'\ :=\  H^{1/2}_{(-\infty,0)}(-\AAA) \ = \ H^{1/2}_{(0,\infty)}(\AAA)
\] 
denote the APS boundary condition for $\DD'$. Since $\AAA$ is invertible, $B_0'$ coincides with the dual APS boundary condition for $\DD'$. Hence, by  the splitting theorem~\ref{T:splitting} 
\begin{equation}\label{E:spliting vanishing}
	\ind \tilde{\DD}\ = \ \ind \DD_{B_0}\ + \ \ind \DD'_{B_0'}.
\end{equation}
The second summand on the right hand side of \eqref{E:spliting vanishing} vanishes by \eqref{E:indD'}. The corollary follows now from \eqref{E:indtildeD}.
\end{proof}

\section{Reduction to an essentially cylindrical manifold}\label{S:esscylindrical}

In this section we reduce the computation of the index  of an APS boundary value problem to a computation on a simpler manifold which we call {\em essentially cylindrical}. 

\begin{definition}\label{D:esscylindrical}
An {\em essentially cylindrical} manifold  $M$ is a complete Riemannian manifold whose boundary is a disjoint union of two components, $\pM=N_0\sqcup N_1$, such that 
\begin{enumerate}
\item
there exist a compact set $K\subset M$, an open manifold $N$, and an isometry $M\backslash K\simeq [0,\varepsilon]\times N$;

\item
under the above isometry $N_0\backslash K =\{0\}\times N$ and $N_1\backslash K =\{\varepsilon\}\times N$. 
\end{enumerate}
\end{definition}

\begin{remark}\label{R:esscylindrical}
Essentially cylindrical manifolds should not be confused with  manifolds with cylindrical ends. In a manifold $M$ with cylindrical ends there is a compact set $K$ such that $M\backslash K= [0,\infty)\times N$ is a cylinder with infinite axis $[0,\infty)$ and compact base $N$. As opposed to it, in an essentially cylindrical manifold, $M\backslash K$ is a cylinder with compact axis $[0,\varepsilon]$ and non-compact base $N$. 
\end{remark}

\subsection{Almost compact essential support}\label{SS:almost compact support}

We now return to the setting of Section~\ref{S:strCallias&maxdom}. In particular,  $M$ is a complete Riemannian manifold with non-compact boundary $\pM$ and there is a fixed isometry between a neighborhood of $\pM$ and the product $Z_r=[0,r)\times\pM$, cf. \eqref{E:Zr};\/ $\DD=D+i\Phi$ is a strongly Callias-type operator (cf. Definition~\ref{D:strCallias}) whose restriction to $Z_r$ is a product \eqref{E:productDD}.

\begin{definition}\label{D:almost compact support}
An {\em almost compact essential support of\, $\DD$} is a smooth submanifold $M_1\subset M$  with smooth boundary, which contains $\pM$ and such that 
\begin{enumerate}
\item 
$M_1$ contains an essential support for $\DD$, cf. Definition~\ref{D:strCallias};

\item 
there exist a compact set $K\subset M$ and $\varepsilon\in (0,r)$ such that 
\begin{equation}\label{E:almost compact}
	M_1\backslash K \ = \ (\pM\backslash K)\times [0,\varepsilon] 
	\subset Z_r. 
\end{equation}
\end{enumerate}
\end{definition}

Note that any almost compact essential support is  an essentially cylindrical manifold, one component of whose boundary is $\pM$ and $\AAA$ has an empty essential support on the other component of the boundary. Also the restriction of $\DD$ to the subset \eqref{E:almost compact} is given by \eqref{E:productDD}.

\begin{lemma}\label{L:almost compact support}
For every strongly Callias-type operator which is product on $Z_r$ there exists an almost compact essential support.
\end{lemma}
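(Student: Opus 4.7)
The plan is to exhibit $M_1$ as a smoothed union of a short collar of $\pM$ with a compact neighbourhood of an essential support of $\DD$. First I would fix any $R>0$ and let $K_R\subset M$ be the closure of $\{x\in M:\Phi^2(x)-|[D,\Phi](x)|<R\}$, which is a compact $R$-essential support by Definition~\ref{D:strCallias}. Because $\DD$ is a product on $Z_r=[0,r)\times\pM$, the function $\Phi^2-|[D,\Phi]|$ is $t$-independent on $Z_r$, so if I denote by $U\subset\pM$ the set where this function is $<R$, then $K_R\cap Z_r\subset[0,r)\times\overline U$. Compactness of $K_R$ forces $\overline U$ to be a compact subset of $\pM$, so I can pick a compact codimension-zero submanifold with smooth boundary $L\subset\pM$ with $\overline U\subset L^\circ$, and then fix any $\varepsilon\in(0,r/2)$.

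Next I would write down a first approximation to $M_1$. The set $K_R\setminus Z_\varepsilon$ is compact and disjoint from $\pM$, and by the previous paragraph its intersection with $Z_r$ lies in $[\varepsilon,r)\times L^\circ$. Hence I can enclose $K_R\setminus Z_\varepsilon$ in a compact set $V\subset M$ with smooth boundary, chosen small enough to be disjoint from $\pM$ and with $V\cap Z_r\subset(\varepsilon/2,r)\times L^\circ$. Setting
\[
    M_1^{\#}\ :=\ \bigl([0,\varepsilon]\times\pM\bigr)\cup V,\qquad
    K\ :=\ \bigl([0,\varepsilon]\times L\bigr)\cup V,
\]
a direct check shows that $M_1^{\#}$ contains $K_R$ (the part in $Z_\varepsilon$ sits in the collar, the rest in $V$), has $\pM$ as a component of its topological boundary, and satisfies
\[
    M_1^{\#}\setminus K\ =\ [0,\varepsilon]\times(\pM\setminus L),
\]
which is precisely the cylindrical form prescribed by Definitions~\ref{D:esscylindrical} and~\ref{D:almost compact support}.

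The remaining task, which is the main technical obstacle, is to replace $M_1^{\#}$ by a genuine smooth submanifold-with-boundary without sacrificing either of these two properties. The only corners of $M_1^{\#}$ lie along $\p V\cap\bigl(\{\varepsilon\}\times\pM\bigr)$, a compact subset of $\{\varepsilon\}\times L^\circ$. I would choose the tubular neighbourhood $V$ generically so that this corner locus is a smooth codimension-two submanifold of $M$, and then apply a standard corner-smoothing supported in an arbitrarily small open neighbourhood of it contained in $(\varepsilon/2,r)\times L^\circ$. Since this neighbourhood sits in the interior of $K$ and can be made disjoint from $K_R$ by shrinking it, the smoothing neither removes any point of $K_R$ from the interior nor disturbs the product form of $M_1$ outside $K$. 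The resulting $M_1$ has smooth boundary $\pM\sqcup N_1$, contains $K_R$, and is cylindrical outside $K$, satisfying all of the requirements of Definition~\ref{D:almost compact support}.
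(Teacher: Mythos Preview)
Your proposal is correct and follows essentially the same approach as the paper: form the union of a collar $[0,\varepsilon]\times\pM$ with a compact set containing an essential support, then smooth the result. The paper's version is more terse---it takes $M':=([0,r/2]\times\pM)\cup K_R$ and simply passes to the $\delta$-neighbourhood $M_\delta=\{x:\operatorname{dist}(x,M')\le\delta\}$ for a suitably small regular value $\delta$, which automatically has smooth boundary and, because the metric on $Z_r$ is a product, coincides with $[0,r/2+\delta]\times\pM$ outside a compact set---whereas you carry out an explicit corner-rounding after arranging transversality of $\partial V$ with $\{\varepsilon\}\times\pM$. Your argument is more detailed in verifying why the cylindrical form survives the smoothing, but the underlying idea is the same.
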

\begin{proof}
Fix $R>0$ and let  $K_R$ be a compact essential support for $\DD$. The union 
\[
	M'\ := \ \big([0,r/2]\times \pM\big)\cup K_R
\]
satisfies all the properties of an almost compact essential support, except that its boundary is not necessarily smooth. 

To find a similar set with a smooth boundary, let us fix an open cover of $M$ by three open sets $M= V_1\cup V_2\cup V_3$ where $V_1\supset K_R$ has compact closure, $V_2\subset  ([0,r)\times \p M)\setminus K_R$, and $V_3\subset M\setminus\big(([0,5r/6)\times \p M)\cup K_R\big)$.
Fix a partition of unity $\{\phi_1, \phi_2,\phi_3\}$ ($\phi_1+\phi_2+\phi_3\equiv 1$)  subordinate to this cover, so that $\supp \phi_j\in V_j$.    Let  $\rho:M\to [0,\infty)$ be a smooth function on $M$ such that 
\[
	\rho(x) \,-\,   {\rm dist}(x,M') \,<\, \frac{r}6,
	 \qquad \text{for all } x\in M.
\]
Let $t:Z_r\simeq [0,r)\times \p M\to [0,r)$ be the natural  projection. 

Without loss of generality we can assume that $r<1$. Define a smooth function on $M$ by 
\[
	f \ :=\ \rho\phi_1\ + \ t\phi_2\ + \ \phi_3
\]
Let $\delta\in (\frac{2r}3,\frac{5r}6)$ be a regular value of $f$. 
\[
	M_\delta\ : = \ f^{-1}([0,\delta])
\]
contains $M'$, has smooth boundary $\p M_\delta= f^{-1}(\delta)$,  and there exists a compact set $K'\supset K_R$ such that $M_\delta =  \big([0,\delta]\times \pM\big)\cup K'$. Hence, $M_\delta$ is an almost compact essential support for $\DD$.
\end{proof}

\subsection{The index on an almost compact essential support}\label{SS:index ac}
Suppose $M_1\subset M$ is an almost compact essential support for $\DD$ and let $N_1\subset M$ be such that $\pM_1=\pM\sqcup N_1$. The restriction of $\DD$ to a neighborhood of $N_1$ need not be product. Since in this paper we only consider boundary value problems for operators which are product near the boundary, we first deform $\DD$ to a product form. Note that if $K$ is as in Definition~\ref{D:almost compact support} then $\DD$ is product in a neighborhood of $N_1\backslash K$. It follows that we only need to deform $\DD$ in a relatively compact neighborhood of $N_1\cap K$. 
More precisely let $\varepsilon$ be as in \eqref{E:almost compact}. We choose $\delta\in(0,\varepsilon)$ and  a tubular neighborhood $U\subset M$ of $N_1$ such that 
\begin{equation}\label{E:productN1}
		U\backslash K\ = \ 
	(\varepsilon-\delta,\varepsilon+\delta)\times (N_1\backslash K)
	\subset Z_r.
\end{equation}
We now identify $U$ with the product $(\varepsilon-\delta,\varepsilon+\delta)\times N_1$ in a way compatible with \eqref{E:productN1}. The next lemma shows that one can find a strongly Callias-type operator $\DD'$ which is a product near $N_1$ and  differs from $\DD$ only on a compact set.

\begin{definition}\label{D:compact perturbation}
Fix a new Riemannian metric on $M$ and a new Hermitian metric on $E$ which differ from the original metrics only on a compact set $K'\subset M$. Let $c':T^*M\to \End(E)$ and let $\n^{E'}$ be a Clifford multiplication and a Clifford connection compatible with the new metrics, which also differ from $c$ and $\n^E$ only on $K'$. Let $D'$ be the Dirac operator defined by $c'$ and $\n^{E'}$. Finally, let $\Phi'\in \End(E)$ be a new Callias potential which is equal to $\Phi$ on $M\backslash{K'}$.  In this situation we say that the operator $\DD':= D'+i\Phi'$ is a {\em compact perturbation} of $\DD$. 
\end{definition}

Clearly, if $\DD'$ is a compact perturbation of $\DD$ which is equal to $\DD$ near $\pM$,  then every elliptic  boundary condition $B$ for $\DD$ is also elliptic for $\DD'$. Then  the stability of the index implies that
\begin{equation}\label{E:D=D'}
	\ind \DD_B \ = \ \ind \DD'_B.
\end{equation}

\begin{lemma}\label{L:compact perturbation}
In the situation of Subsection~\ref{SS:index ac} there exists a compact perturbation $\DD'$ of $\DD$ which is product near $\pM_1$ and such that there is a compact essential support of $\DD'$ contained in $M_1$. 
\end{lemma}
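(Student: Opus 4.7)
The plan is to perform a compact perturbation of $\DD$ supported in a small one-sided collar of $N_1$ inside $M_1$, interpolating smoothly between the original Dirac bundle data on $M$ and a $t$-independent ``product version'' near $N_1$. The starting point is to extend the cylindrical identification \eqref{E:productN1} (which already exists on $U\backslash K$) to a full tubular neighborhood identification $U\simeq(\varepsilon-\delta,\varepsilon+\delta)\times N_1$; since $N_1\cap K$ is a closed subset of the compact set $K$, a tubular neighborhood of $N_1\cap K$ can be constructed by the tubular neighborhood theorem and smoothly glued to the existing product outside $K$. Inside $V:=U\cap M_1=(\varepsilon-\delta,\varepsilon]\times N_1$ fix a smooth cutoff $\chi:\RR\to[0,1]$ with $\chi(t)=0$ for $t\le\varepsilon-\delta/2$ and $\chi(t)=1$ for $t\ge\varepsilon-\delta/4$.

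Next, I would interpolate each piece of Dirac bundle data between the restriction at $t=\varepsilon$ (pulled back $t$-independently to $V$ via the projection $V\to N_1$) and the original data. For the Riemannian metric $g^M$ and Hermitian inner product on $E$, convex combination produces a smooth interpolation through metrics; the Clifford multiplication $c'$ is then obtained from $c$ by composing with the bundle isomorphism between $(T^*M,g^M)$ and $(T^*M,g')$, which enforces $c'(\xi)^2=-|\xi|_{g'}^2$. For the connection, a partition-of-unity interpolation of connection $1$-forms in local trivializations yields $\n^{E'}$ compatible with the new Clifford multiplication. Finally, set
\begin{equation*}
	\Phi'(t,y)\;:=\;\big(1-\chi(t)\big)\Phi(t,y)\;+\;\chi(t)\,\Phi(\varepsilon,y),
\end{equation*}
which is self-adjoint and, by Remark~\ref{R:Phi commutes}, commutes with $c'$. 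Outside of $U$ keep all original structures, so $\DD'=\DD$ there. By construction $\DD'$ is product on $[\varepsilon-\delta/4,\varepsilon]\times N_1$, still product on $Z_r$, and differs from $\DD$ only on the compact set $K':=\{(t,y)\in V:\varepsilon-\delta/2\le t\le\varepsilon-\delta/4\}\subset M_1$, so it is a compact perturbation in the sense of Definition~\ref{D:compact perturbation}.

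It remains to verify the essential support condition. Let $K_R\subset M_1$ be a compact $R$-essential support of $\DD$, which exists by Definition~\ref{D:almost compact support}(i). Outside the compact set $K_R\cup K'\subset M_1$, one has $\DD'=\DD$, hence $\Phi'^{\,2}-\big|[D',\Phi']\big|\ge R$ there, so $K_R\cup K'$ is an $R$-essential support of $\DD'$ contained in $M_1$. The main technical subtlety is ensuring that the interpolated Clifford multiplication and connection remain genuine Dirac bundle data in the sense of Definition~\ref{D:Dirac bundle}; this is handled by observing that $N_1\cap K$ is compact, so the interpolation can be carried out in finitely many trivializing charts using a subordinate partition of unity, after which the pointwise compatibility conditions are automatic from the smoothness of the interpolation.
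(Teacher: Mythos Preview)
Your overall strategy matches the paper's: deform $\DD$ to product form near $N_1$ by changes supported in the compact region $U\cap K$, then verify that an essential support survives inside $M_1$. Two points need correction.

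First, your set $K':=\{(t,y)\in V:\varepsilon-\delta/2\le t\le\varepsilon-\delta/4\}$ is \emph{not} compact, since $N_1$ is non-compact; as written, $K'\cong[\varepsilon-\delta/2,\varepsilon-\delta/4]\times N_1$. What makes the perturbation compact is the fact (which you note but then do not use) that on $V\backslash K$ the data is already product, so the interpolation is trivial there. The actual support of the change is contained in $V\cap K$, which is compact. Your $K'$ also omits the slab $t\in[\varepsilon-\delta/4,\varepsilon]$, where inside $K$ the data has been replaced by the $t=\varepsilon$ pullback and may differ from the original.

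Second, the interpolation of Dirac-bundle data is under-specified. A convex combination of connections is again a connection, but it need not remain compatible with the interpolated Clifford multiplication in the sense of Definition~\ref{D:Dirac bundle}(ii); and your metric-isomorphism recipe for $c'$ does not obviously reproduce the pulled-back $c$ on the region where $\chi=1$. Smoothness alone does not enforce these compatibilities. The paper handles this by citing \cite[Proposition~5.4]{BrMaschler17}, which supplies a one-parameter family $(c_t,\nabla^E_t)$ of genuine Dirac-bundle structures deforming to product near $N_1$ while remaining fixed outside $U\cap K$, and then takes $\DD'=\DD_t$ for $t$ small.

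On the other hand, your essential-support argument is cleaner than the paper's: once the support of the change is correctly identified as a compact set $K''\subset M_1$, the union $K_R\cup K''$ is immediately an $R$-essential support of $\DD'$ since $\DD'=\DD$ outside $K''$. The paper instead argues by continuity that $\Phi_t^2-|[D_t,\Phi_t]|\ge R/2$ for small $t$, which is unnecessary.
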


\begin{proof}
By Proposition~5.4 of \cite{BrMaschler19} there exists a smooth deformation $(c_t,\n^E_t)$ of the Clifford multiplication and the Clifford connection such that 
\begin{enumerate}
\item
for $t=0$ it is equal to $(c,\n^E)$;
\item for $t>0$ it is a product near $N_1$;
\item  for all $t$ its restriction to $M\backslash U$ is independent of $t$ (and, hence, coincides with $(c,\n^E)$).
\end{enumerate}
Moreover, since all our structures are product near $N_1\backslash K$, the construction of this deformation in Appendix~A of \cite{BrMaschler19} provides a deformation which is independent of $t$ on $M\backslash(U\cap K)$. Thus for all $t>0$ the Dirac operator $D_t$ defined by $(c_t,\n^E_t)$ is a  compact perturbation of $D$. 

Let $\Phi_t(x)$ be a smooth deformation of $\Phi(x)$ which coincides with $\Phi$ at $t=0$, is independent of $t$ for $x\not\in U\cap K$, and is product near $N_1$ for all $t>0$. Then $\DD_t:= D_t+i\Phi_t$ is a compact perturbation of $\DD$ for all $t\ge0$. 

Fix $R>0$  such that there is an $R$-essential support of $\DD$ which is contained in $M_1$. Then there exists a compact set $K_R\subset M_1$ such that outside of $K_R$ the estimate \eqref{E:strinvinf} holds. Since all our deformations are smooth and compactly supported $\Phi^2_t(x)\;-\;|[D_t,\Phi_t](x)|\;\ge\;R/2$ for all small enough $t>0$. The assertion of the lemma holds now with $R'=R/2$ and $\DD'= \DD_t$ with $t>0$ small. 
\end{proof}

\subsection{Reduction of the index problem to an almost compact essential support}\label{SS:M=M1}
Let $M_1\subset M$ be an almost compact essential support of $\DD$. Let $\DD'$ be as in the previous subsection. Let $\AAA$ be the restriction of $\DD$ to $\pM$. It is also the restriction of  $\DD'$ (since $\DD'= \DD$ near $\pM$). We denote by $-\AAA_1$ the restriction of $\DD'$ to $N_1$. Thus near $N_1$ the operator $\DD'$ has the form $c(\tau)(\p_t-\AAA_1)$. The sign convention is related to the fact that it is often convenient to view $N_0=\dm$ as the ``left" boundary of $M_1$ and $N_1$  as the ``right" boundary. Then one identifies a neighborhood of $N_1$ in $M_1$ with the product $(-r,0]\times{}N_1$. With respect to this identification the restriction of $\DD'$ to this neighborhood becomes $c(dt)(\p_t+\AAA_1)$. In particular, on  the cylindrical part $M_1\backslash{K}$ we have $\AAA_1= \AAA$.

\begin{theorem}\label{T:indM=indM1}
Suppose $M_1\subset M$ is an almost compact essential support of\/ $\DD$ and let\/ $\pM_1=\pM\sqcup N_1$.  Let\/ $\DD'$ be a compact perturbation of\/ $\DD$ which is product near $N_1$ and  such that there is a compact essential support for $\DD'$ which is contained in $M_1$. Let $B_0$ be any elliptic boundary condition for $\DD$  and let 
\[
	B_1 \ = \ H_{(-\infty,0)}^{1/2}(-\AAA_1) \ = \ 
	H_{(0,\infty)}^{1/2}(\AAA_1)
\] 
be the APS boundary condition for the restriction of\/ $\DD'$ to a neighborhood of $N_1$. Then $B_0\oplus B_1$ is an elliptic boundary condition for the restriction $\DD'':= \DD'|_{M_1}$ of\/ $\DD'$ to $M_1$ and  
\begin{equation}\label{E:indM=indM1}
	\ind \DD_{B_0}\ = \ \ind \DD''_{B_0\oplus B_1}.
\end{equation}
\end{theorem}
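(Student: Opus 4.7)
The plan is to reduce to $\DD'$ via compact-perturbation invariance, then cut $M$ along $N_1$ and use the splitting theorem together with the vanishing theorem on the cylindrical complement.

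First I would use \eqref{E:D=D'}, which applies since $\DD'$ is a compact perturbation of $\DD$ equal to $\DD$ near $\pM$ and $B_0$ is elliptic for both, to reduce to proving $\ind \DD'_{B_0} = \ind \DD''_{B_0\oplus B_1}$. Next, let $M_2$ be the closure of $M\setminus M_1$, regarded as a complete Riemannian manifold with (only) boundary $N_1$. Because $\DD'$ is product near $N_1$, we are in the setting of Example~\ref{Ex:transmission}: cutting $M$ along $N_1$ yields the disjoint union $M_1\sqcup M_2$, and Theorem~\ref{T:splitting} gives
\[
    \ind \DD'_{B_0}\ = \ \ind \DD''_{B_0\oplus B_1^{M_1}}\ + \ \ind (\DD'|_{M_2})_{B_2^{M_2}},
\]
where, with the sign convention that from $M_1$'s side the boundary operator at $N_1$ is $-\AAA_1$ and from $M_2$'s side it is $\AAA_1$, the condition $B_1^{M_1}$ is APS at $M_1$'s copy of $N_1$ and $B_2^{M_2}$ is dual APS at $M_2$'s copy. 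Unwinding the definitions, $B_1^{M_1}=H^{1/2}_{(-\infty,0)}(-\AAA_1)=H^{1/2}_{(0,\infty)}(\AAA_1)=B_1$, matching the theorem's $B_1$, and $B_2^{M_2}=H^{1/2}_{(-\infty,0]}(\AAA_1)$.

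It remains to show that the $M_2$-contribution vanishes. By the construction of $\DD'$ (Lemma~\ref{L:compact perturbation}) an essential support of $\DD'$ lies inside $M_1$, so $\DD'|_{M_2}$ has \emph{empty} essential support: there exists $R>0$ with $\Phi^2-|[D',\Phi']|\geq R$ everywhere on $M_2$. The algebraic identity $\AAA_1^2 = A^2+ic(\tau)[A,\Phi]+\Phi^2$ used in the proof of Lemma~\ref{L:restrictiontobndry} then forces $\AAA_1^2\geq R>0$ on all of $N_1$, so $0\notin\operatorname{spec}\AAA_1$ and in particular $\ker\AAA_1=0$. Hence dual APS and APS for $\AAA_1$ coincide: $B_2^{M_2}=H^{1/2}_{(-\infty,0)}(\AAA_1)$ is precisely the APS boundary condition for $\DD'|_{M_2}$. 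Corollary~\ref{C:vanishing} applied to $\DD'|_{M_2}$ (complete, product near its boundary $N_1$, empty essential support) yields $\ind(\DD'|_{M_2})_{B_2^{M_2}}=0$, and combining with the splitting identity gives \eqref{E:indM=indM1}.

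The main obstacle is really just bookkeeping: keeping careful track of the two orientations at $N_1$, so that the APS condition appears on the $M_1$-side (matching $B_1$) while the dual APS condition appears on the $M_2$-side. The crucial substantive point is that the vanishing of $\ker\AAA_1$ is not assumed but is \emph{forced} by the empty essential support of $\DD'|_{M_2}$; without it, Corollary~\ref{C:change of B} would produce an extra $\dim\ker\AAA_1$ term distinguishing APS from dual APS on $M_2$ and the equality \eqref{E:indM=indM1} would fail on the nose.
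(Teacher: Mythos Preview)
Your proof is correct and follows essentially the same route as the paper: reduce to $\DD'$ via \eqref{E:D=D'}, apply the Splitting Theorem~\ref{T:splitting} along $N_1$, observe that the empty essential support of $\DD'|_{M\setminus M_1}$ forces $\AAA_1$ to be invertible so that APS and dual APS coincide there, and kill the complement's contribution with Corollary~\ref{C:vanishing}. Your extra care in tracking orientations and in making explicit why $\ker\AAA_1=0$ is forced (rather than assumed) is a nice expository touch, but the argument is the same as the paper's.
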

\begin{proof}
Let $\DD'''$ denote the restriction of $\DD'$ to $M\backslash{M_1}$. This is a strongly Callias-type operator with an empty essential support. Notice that its restriction to $N_1$ is equal to $\AAA_1$. Thus the APS boundary condition for $\DD'''$ is $B_2 =  H_{(-\infty,0)}^{1/2}(\AAA_1)$. Since $\AAA_1$ is invertible, $B_2$ coincides with the dual APS boundary condition for $\DD'$. Hence, by the Splitting Theorem~\ref{T:splitting},
\[
	\ind \DD'_{B_0} \ = \ 
	\ind \DD''_{B_0\oplus B_1} \ + \ \ind \DD'''_{B_2}.
\]
The last summand on the right hand side of this equality vanishes by Corollary~\ref{C:vanishing}. The theorem follows now from \eqref{E:D=D'}. 
\end{proof}

\section{The index of operators on essentially cylindrical manifolds}\label{S:esscyl}

In the previous section we reduced the computation of the index of $\DD$ to a computation of the index of the restriction of $\DD$ to its almost compact essential support (which is an essentially cylindrical manifold). In this section we consider a strongly  Callias-type operator $\DD$ on an essentially cylindrical manifold $M$  (these data might or might not come as a restriction of another operator to its almost compact essential support. In particular, we don't assume that the restriction of $\DD$ to $N_1$ is invertible).  From this point on we assume that {\em the dimension of $M$ is odd}.

Let $\AAA_0$ and $-\AAA_1$ be the restrictions of $\DD$ to $N_0$ and $N_1$ respectively. The main result of this section is that the index of the APS boundary value problem for $\DD$  depends only  on $\AAA_0$ and $\AAA_1$. Thus it is an invariant of the boundary.  In the next section we will discuss the properties of this invariant.

\subsection{Compatible essentially cylindrical manifolds}\label{SS:essequal}

Let $M$ be an essentially cylindrical manifold and let $\pM=N_0\sqcup N_1$.  As usual, we identify a tubular neighborhood of $\pM$ with the product 
\[
	Z_r\ := \  \big(\, N_0\times[0,r)\,\big)\,\sqcup \, \big(\, N_1\times[0,r)\,\big)
	\ \subset \ M.
\]

\begin{definition}\label{D:essequal}
We say that another essentially cylindrical manifold $M'$ is {\em compatible} with  $M$ if there is a fixed isometry between $Z_r$ and a neighborhood $Z_r'\subset M'$ of the boundary of $M'$. 
\end{definition}

Note that if $M$ and $M'$ are compatible then their boundaries are isometric. 

\subsection{Compatible strongly Callias-type operators}\label{SS:essequal operators}
Let $M$ and $M'$ be compatible essentially cylindrical manifolds and let $Z_r$ and $Z_r'$ be as above. Let $E\to M$ be a Dirac bundle over $M$ and let  $\DD: C^\infty(M,E)\to C^\infty(M,E)$  be a strongly Callias-type operator whose restriction to $Z_r$ is product and such that $M$ is an almost compact essential support of $\DD$.  This  means that there is a compact set $K\subset M$ such that $M\backslash K= [0,\varepsilon]\times N$ and the restriction of $\DD$ to $M\backslash K$ is product (i.e.  is given by \eqref{E:productDD}). Let 
 $E'\to M'$ be a Dirac bundle over $M'$ and let $\DD': C^\infty(M',E')\to C^\infty(M',E')$ be a strongly Callias-type operator, whose restriction to  $Z_r'$ is product and such that  $M'$ is an almost compact essential support of $\DD'$.

\begin{definition}\label{D:essequal operators}
In the situation discussed above we say that $\DD$ and $\DD'$ are {\em compatible} if there is an isomorphism $E|_{Z_r}\simeq E'|_{Z_r'}$  which identifies the  restriction of $\DD$ to $Z_r$ with the restriction of $\DD'$ to $Z_r'$.  
\end{definition}

Let $\AAA_0$ and $-\AAA_1$ be the restrictions of $\DD$ to $N_0$ and $N_1$ respectively. 
Let $B_0=H^{1/2}_{(-\infty,0)}(\AAA_0)$ and $B_1=H^{1/2}_{(-\infty,0)}(-\AAA_1)= H^{1/2}_{(0,\infty)}(\AAA_1)$ be the APS boundary conditions for $\DD$ at $N_0$ and $N_1$ respectively. Since $\DD$ and $\DD'$ are equal near the boundary, $B_0$ and $B_1$ are also elliptic boundary conditions for $\DD'$.

\begin{theorem}\label{T:indep of D}
Suppose $\DD$ is a strongly Callias-type operator on an essentially cylindrical odd-dimensional manifold $M$ such that $M$ is an almost compact essential support of\, $\DD$. Suppose that the operator  $\DD'$ is compatible with $\DD$. Let $\pM=N_0\sqcup N_1$ and let $B_0= H^{1/2}_{(-\infty,0)}(\AAA_0)$ and $B_1= H^{1/2}_{(-\infty,0)}(-\AAA_1)= H^{1/2}_{(0,\infty)}(\AAA_1)$ be the APS boundary conditions for $\DD$ (and, hence, for $\DD'$) at  $N_0$ and $N_1$ respectively. Then 
\begin{equation}\label{E:indep of D}
	\ind \DD_{B_0\oplus B_1} \ = \ \ind \DD'_{B_0\oplus B_1}.
\end{equation}
\end{theorem}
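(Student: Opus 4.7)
The plan is to glue $M$ and $M'$ along their common boundary $\pM$ to form a complete odd-dimensional manifold $\hat{M}$ without boundary, extend the compatible operators $\DD$ and $\DD'$ to a smooth strongly Callias-type operator $\hat{\DD}$ on $\hat{M}$, and then combine the Splitting Theorem~\ref{T:splitting} with the Callias-type index theorem on complete odd-dimensional manifolds without boundary (the latter being the only place where the odd-dimensionality of $M$ is essentially used).

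Using the compatibility isomorphism $E|_{Z_r}\cong E'|_{Z_r'}$, together with the orientation convention that makes the operator smooth across the seam, one obtains such a closed manifold $\hat{M}$ and a smooth strongly Callias-type operator $\hat{\DD}$ on it. The Splitting Theorem applied along $\pM\subset\hat{M}$ yields
\[
\ind\hat{\DD}\ =\ \ind\DD_{B_0\oplus B_1}\ +\ \ind\DD'_{\bar{B}_0\oplus\bar{B}_1},
\]
where $\bar{B}_0,\bar{B}_1$ are the dual APS boundary conditions on the $M'$-side of the cut. By Corollary~\ref{C:change of B}, the second summand differs from $\ind\DD'_{B_0\oplus B_1}$ only by a correction involving $\dim\ker\AAA_0$ and $\dim\ker\AAA_1$, hence only by a quantity determined by $\AAA_0$ and $\AAA_1$.

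The crux is therefore to show that $\ind\hat{\DD}$ itself depends only on $(\AAA_0,\AAA_1)$. To this end I would repeat the gluing construction with $(M',\DD')$ replaced by another compatible pair $(M'',\DD'')$, obtaining $\hat{M}''=M\cup M''$ with operator $\hat{\DD}''$, and establish $\ind\hat{\DD}=\ind\hat{\DD}''$. This is where the Callias-type index theorem on complete odd-dimensional manifolds without boundary enters: it expresses each of $\ind\hat{\DD}$ and $\ind\hat{\DD}''$ as the index of an auxiliary Dirac-type operator on a compact hypersurface outside an essential support, graded by the Callias potential $\hat{\Phi}$. The uniformity of the product structure in the cylindrical region $(-r,r)\times\pM$ (which is common to $\hat{M}$ and $\hat{M}''$, and where $\hat{\DD}$ and $\hat{\DD}''$ coincide with the operator determined only by $\AAA_0,\AAA_1$) allows a cobordism argument identifying the two topological indices.

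Subtracting the two resulting splitting identities gives $\ind\DD'_{B_0\oplus B_1}=\ind\DD''_{B_0\oplus B_1}$ for any two compatible pairs, and specializing $M''=M$, $\DD''=\DD$ finishes the proof. The main obstacle is the cobordism step: one must select a compact hypersurface in the non-compact cylindrical region of $\hat{M}$ on which the Callias-type index theorem can be applied, and argue that its topological contribution is the same for $\hat{\DD}$ and $\hat{\DD}''$. The strong Callias condition on the boundary operators, which forces $|\hat{\Phi}|\to\infty$ away from a compact essential support, is precisely what enables such a construction.
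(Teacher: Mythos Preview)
Your overall framework---glue $M$ to $M'$ along the common boundary, invoke the Callias-type index theorem on the resulting closed odd-dimensional manifold, then apply the Splitting Theorem---is exactly the paper's strategy. However, two concrete points are handled differently in the paper, and your treatment of them is where the proposal has gaps.

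\textbf{The operator on the $M'$-piece is not $\DD'$.} You write the splitting identity as $\ind\hat{\DD}=\ind\DD_{B_0\oplus B_1}+\ind\DD'_{\bar B_0\oplus\bar B_1}$, with $\DD'$ itself sitting on the second piece. But to make the glued operator smooth across the seam you must reverse the orientation of $M'$ and twist the bundle identification by $c(\tau)$; the paper carries this out explicitly (Subsections~\ref{SS:tilM}--\ref{SS:tilDD}) and shows that the restriction of the glued operator $\tilde\DD$ to $-M'$ is $-(\DD')^*$, not $\DD'$ (Lemma~\ref{L:tildeD}). This matters: once you know the second summand is $\ind(-(\DD')^*)_{B_0'\oplus B_1'}$ with $B_0'\oplus B_1'$ the dual APS condition, Remark~\ref{R:aps-dual aps} identifies $B_0'\oplus B_1'$ as the adjoint condition for $-\DD'$, and \eqref{E:indD-indD*} gives $\ind(-(\DD')^*)_{B_0'\oplus B_1'}=-\ind\DD'_{B_0\oplus B_1}$ on the nose. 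No kernel-dimension corrections and no auxiliary comparison are needed.

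\textbf{The glued index is exactly zero, not merely boundary-determined.} Your ``crux''---showing that $\ind\hat\DD$ depends only on $(\AAA_0,\AAA_1)$ via a further cobordism comparison with a second gluing $M\cup M''$---is an unnecessary detour. The paper observes (Lemma~\ref{L:indtildeD}) that $\tilde M$, being a union of two essentially cylindrical manifolds glued along both boundary components, has a compact essential support whose complement is $S^1\times N$. The Callias-type index theorem then reduces $\ind\tilde\DD$ to the index of a product operator $\gamma(\partial_t+\tilde A)$ on a hypersurface $\Sigma=S^1\times L$, whose kernel and cokernel are both isomorphic to $\ker\tilde A$ by separation of variables; hence $\ind\tilde\DD=0$. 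Combining this with the previous paragraph yields $\ind\DD_{B_0\oplus B_1}=\ind\DD'_{B_0\oplus B_1}$ in one line.

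In short: fix the identification of the operator on the flipped piece (it is the negative adjoint), and replace your cobordism-comparison step by the direct vanishing $\ind\tilde\DD=0$ coming from the $S^1$-factor of the hypersurface; then the argument is complete and coincides with the paper's.
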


The proof of this theorem occupies Subsections~\ref{SS:tilM}--\ref{SS:prindep of D}.

\subsection{Gluing together $M$ and $M'$}\label{SS:tilM}

Let $-M'$ denote  another copy of  manifold $M'$. Even though we don't assume that our manifolds are oriented, it is useful to think of $-M'$ as manifold $M$ with the opposite orientation. We identify the boundary of $-M'$ with the product 
\[
	-Z_r'\ := \ 
	\big(\, N_0\times(-r,0]\,\big)\,\sqcup \, \big(\, N_1\times(-r,0]\,\big)
\]
and consider the union 
\[
	\tilde{M} \ := \ M\cup_{N_0\sqcup N_1} (-M').
\]
Then $Z_{(-r,r)}:= Z_r\cup (-Z_r')$ is a subset of $\tilde{M}$ identified with the product 
\[
	\big(\, N_0\times(-r,r)\,\big)\,\sqcup \, \big(\, N_1\times(-r,r)\,\big).
\]
We note that $\tilde{M}$ is a complete Riemannian manifold without boundary.

\subsection{Gluing together $\DD$ and $(\DD')^*$}\label{SS:tilDD}

Let $E_\pM$ denote the restriction of $E$ to $\pM$. The product structure on $E|_{Z_r}$ gives an isomorphism $\psi:E|_{Z_r}\to [0,r)\times E_\pM$. Recall that we identified $Z_r$ with $Z_r'$ and fixed an isomorphism between the restrictions of $E$ to $Z_r$ and $E'$ to $Z_r'$. By a slight abuse of notation we use this isomorphism to view $\psi$ also as an isomorphism $E'|_{Z'_r}\to [0,r)\times E_\pM$.

Let $\tilde{E}\to \tilde{M}$ be the vector bundle over $\tilde{M}$ obtained by gluing $E$ and $E'$ using the isomorphism $c(\tau):E|_\pM\to E'|_{\pM'}$. This means that we fix isomorphisms 
\begin{equation}\label{E:tilE=}
	\phi:\,\tilde{E}|_M\to E, \quad \phi':\,\tilde{E}|_{M'}\to E', 
\end{equation}
so that 
\[
	\begin{aligned}
	\psi\circ\phi\circ \psi^{-1} \ &= \ 
		\id: [0,r)\times E_\pM \ \to \ [0,r)\times E_\pM, \\
	\psi\circ\phi'\circ \psi^{-1} \ &= \ 
		1\times c(\tau): [0,r)\times E_\pM \ \to \ [0,r)\times E_\pM.
	\end{aligned}
\]

We denote by $c':T^*M'\to \End(E')$ the Clifford multiplication on $E'$ and set $c''(\xi):= -c'(\xi)$. We think of $c''$ as the Clifford multiplication of $T^*(-M')$ on $E'$ (since the dimension of $M'$ is odd, changing the sign of the Clifford multiplication corresponds to changing the orientation on $M'$). Then $\tilde{E}$ is a Dirac bundle over $\tilde{M}$ with the Clifford multiplication
\begin{equation}\label{E:tildec}
	\tilde{c}(\xi)\ : = \ 
  \begin{cases}
  	&c(\xi), \qquad \xi\in T^*M;\\
  	&c''(\xi)=-c'(\xi), \qquad \xi\in T^*M'.
  \end{cases}
\end{equation}
One readily checks that \eqref{E:tildec} defines a smooth Clifford multiplication on $\tilde{E}$. Let $\tilde{D}:C^\infty(\tilM,\tilE)\to C^\infty(\tilM,\tilE)$ be the Dirac operator. Then the isomorphism $\phi$ of \eqref{E:tilE=} identifies the restriction of $\tilde{D}$ with $D$, the isomorphism $\phi'$ identifies the restriction of $\tilde{D}$ with $-D'$, and isomorphism $\psi\circ\phi'\circ\psi^{-1}$ identifies the restriction of $\tilde{D}$ to $-Z_r'$ with
\begin{equation}\label{E:tilDM'}
 	\tilde{D}|_{Z_r'}\ = \ - c'(\tau)\circ D'_{Z'_r}\circ c'(\tau)^{-1}.
 \end{equation} 

Let $\Phi'$ denote the Callias potential of $\DD'$, so that $\DD'=D'+i\Phi'$. Consider the bundle map $\tilde{\Phi}\in \End(\tilde{E})$ whose restriction to $M$ is equal to $\Phi$ and whose restriction to $M'$ is equal to $\Phi'$. We summarize the constructions presented in this subsection in the following

\begin{lemma}\label{L:tildeD}
The operator $\tilde{\DD} :=  \tilde{D} + i\tilde{\Phi}$ 
is a strongly Callias-type operator on $\tilde{M}$, whose restriction to $M$ is equal to $\DD$ and whose restriction to $M'$ is equal to $-D'+i\Phi'= -(\DD')^*$.
\end{lemma}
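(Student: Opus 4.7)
The plan is to verify four things in turn: smoothness of $\tilde\DD$ across the gluing hypersurface $\pM\subset\tilM$; the identification of $\tilde\DD|_M$ with $\DD$; the identification of $\tilde\DD|_{M'}$ with $-(\DD')^*$; and the strongly Callias-type conditions for $\tilde\DD$ on $\tilM$.

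The main obstacle will be the smoothness check, because three things change simultaneously across $\pM$: the Clifford multiplication (from $c$ to $-c'$ by \eqref{E:tildec}), the bundle identification (by $c(\tau)$ through $\phi'$), and the choice of normal coordinate. I would introduce a common normal coordinate $\tilde t$ on a neighborhood of $\pM$ in $\tilM$ with $\tilde t=t$ on the $M$-side and $\tilde t=-t'$ on the $-M'$-side. On the $M$-side, \eqref{E:productD} immediately gives $\tilde D|_M=c(\tau)(\p_t+A)=c(\tau)(\p_{\tilde t}+A)$. On the $-M'$-side, I would work in the product trivialization $\psi$, in which $\phi'$ acts fiberwise by $c(\tau)$ and, by compatibility of $\DD'$ with $\DD$, $D'$ takes the product form $c(\tau)(\p_{t'}+A)$. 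A short computation using the anticommutation \eqref{E:anticommuting} and $c(\tau)^2=-1$ then yields
\[
   \tilde D \ = \ (\phi')^{-1}(-D')\phi' \ = \ c(\tau)\,(A-\p_{t'}) \ = \ c(\tau)\,(\p_{\tilde t}+A)
\]
on $-Z'_r$, matching the $M$-side expression. Similarly, under $\phi'$ the potential becomes $c(\tau)^{-1}\Phi' c(\tau)=\Phi'=\Phi$, where the first equality uses Remark~\ref{R:Phi commutes} and the second uses compatibility. Hence $\tilde\DD$ is smooth across $\pM$.

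The restriction claims then follow directly. On $M$, the identification $\tilde\DD|_M=\DD$ is built into the definitions of $\phi$ and $\tilde\Phi$. On $M'$, the Dirac part becomes $-D'$ via $\phi'$ because $\tilde c|_{T^*M'}=-c'$, while the potential part is $i\Phi'$, giving $\tilde\DD|_{M'}=-D'+i\Phi'=-(D'-i\Phi')=-(\DD')^*$, using self-adjointness of $\Phi'$.

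Finally, the strongly Callias-type conditions transfer piecewise. The commutator $[\tilde D,\tilde\Phi]$ restricts to $[D,\Phi]$ on $M$ and to $-[D',\Phi']$ on $M'$, each a bundle map by Definition~\ref{D:strCallias}(i), so condition~(i) holds on $\tilM$. For condition~(ii), given any $R>0$, I would pick $R$-essential supports $K_R\subset M$ of $\DD$ and $K'_R\subset M'$ of $\DD'$; their union is a compact subset of $\tilM$, and outside it the growth estimate $\tilde\Phi^2-|[\tilde D,\tilde\Phi]|\ge R$ holds by applying \eqref{E:strinvinf} separately on each side.
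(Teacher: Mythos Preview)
Your proposal is correct and follows the paper's approach. The paper does not give a separate proof of this lemma; it is stated as a summary of the construction in the subsection immediately preceding it, where the key ingredients---the gluing isomorphism $c(\tau)$, the sign-flipped Clifford multiplication~\eqref{E:tildec}, and the identification~\eqref{E:tilDM'} of $\tilde D$ on the collar of $-M'$---are assembled but not explicitly combined. Your smoothness computation on the $-M'$-side is exactly the content of~\eqref{E:tilDM'} unpacked in the common normal coordinate $\tilde t$, and your verification of the strongly Callias-type conditions by taking the union of $R$-essential supports is the routine check the paper leaves to the reader.
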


The operator $\tilde{\DD}$ is a strongly Callias-type operator on a complete Riemannian manifold without boundary. Hence, \cite{Anghel90}, it is Fredholm. 

\begin{lemma}\label{L:indtildeD}
$\ind \tilde{\DD}=0$. 
\end{lemma}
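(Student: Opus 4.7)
The plan is to verify that $\tilde{\DD}$ is a strongly Callias-type operator on the complete odd-dimensional boundaryless manifold $\tilde{M}$, and then to invoke a Callias-type index theorem (cf.~\cite{Anghel93Callias,Bunke95}) expressing $\ind\tilde{\DD}$ as a K-theoretic quantity concentrated on a compact hypersurface, which will vanish by the symmetric structure of the gluing construction.

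First I would check that $\tilde{\DD}$ is strongly Callias-type on $\tilde{M}$. The potential $\tilde{\Phi}$ equals $\Phi$ on $M$ and $\Phi'$ on $-M'$; combining the strong Callias property of $\DD$ (with essential support $K_R\subset M$) and of $(\DD')^*$ (which is also strongly Callias-type by Remark~\ref{R:strCalliasequiv}, with essential support $K_R'\subset M'$) shows that outside $K_R\cup K_R'$ the inequality $\tilde{\Phi}^2-|[\tilde{D},\tilde{\Phi}]|\geq R$ holds. The compatibility assumption and the product form of $\DD$ and $\DD'$ near $\pM$ ensure that this inequality extends across the gluing; on the cylindrical flanges outside $K_R\cup K_R'$ the condition further reduces to the strong Callias property of the boundary operators $\AAA_0$ and $\AAA_1$ (Lemma~\ref{L:restrictiontobndry}). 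Thus $\tilde{\DD}$ has a compact essential support in $\tilde{M}$.

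Next I would apply the Callias-type index theorem to $\tilde{\DD}$. Since $\tilde{M}$ is complete, the Hopf-Rinow theorem guarantees that closed balls are compact, and one can construct a smooth compact hypersurface $\Sigma\subset\tilde{M}$ enclosing the essential support; $\Sigma$ may be arranged to intersect each cylindrical flange $(-r,r)\times N_i$ along $(-r,r)\times Y_i$ for a compact $Y_i\subset N_i$ lying outside the essential support of $\AAA_i$. The Callias-type theorem then identifies $\ind\tilde{\DD}$ with the index of the Dirac operator on $\Sigma$ graded by the sign of $\tilde{\Phi}$.

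The vanishing should follow from the symmetric construction of $\tilde{\DD}$: the two pieces $\Sigma\cap M$ and $\Sigma\cap(-M')$ carry induced grading operators related by the Clifford map $c(\tau)$ used in the gluing, and the orientation reversal between $M$ and $-M'$ introduces a sign flip $\tilde{c}=-c'$ which, combined with $\tilde{\DD}|_{-M'}=-(\DD')^*$, makes the contributions from the two halves of $\Sigma$ cancel. The main obstacle will be tracking these signs and conjugations precisely. An alternative, potentially cleaner, approach is to stretch the tubular neighborhood of $\pM$ in $\tilde{M}$ to a cylinder $[-T,T]\times\pM$ and take the limit $T\to\infty$; in the limit, the index is controlled by the spectral flow of the constant family $\{\AAA_0,-\AAA_1\}$ on $\pM$, which is manifestly zero.
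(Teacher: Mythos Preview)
Your opening step---applying the Callias-type index theorem to reduce $\ind\tilde{\DD}$ to the index of an induced operator on a compact hypersurface $\Sigma$---is exactly right, and the paper does the same. The gap is in your geometric picture of $\Sigma$ and in the vanishing argument.

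You arrange $\Sigma$ to cross the gluing collar $(-r,r)\times N_i$ and then try to cancel the contributions of $\Sigma\cap M$ against those of $\Sigma\cap(-M')$ by ``symmetry.'' But there is no such symmetry: $M$ and $M'$ are merely \emph{compatible} essentially cylindrical manifolds---they agree only on the collar $Z_r$, and the operators $\DD$, $\DD'$ may differ everywhere else. So there is no involution exchanging the two halves, and no reason the two pieces of $\Sigma$ should give opposite contributions. Your alternative neck-stretching argument has the same issue: stretching the gluing collar does not decouple the index into contributions from $M$ and $M'$ that visibly cancel.

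The paper's argument exploits a different cylindrical structure---the one \emph{at infinity}, not at the gluing collar. Since both $M$ and $M'$ are essentially cylindrical with compatible ends, outside a compact set each looks like $[0,\varepsilon]\times N$; gluing these along both boundary components $N_0$ and $N_1$ makes $\tilde{M}\setminus\tilde{K}\cong S^1\times N$. One then takes $\tilde{K}$ large enough that $\tilde{\DD}$ is product on $S^1\times N$ and $\Sigma=\partial\tilde{K}=S^1\times L$ for some compact $L\subset N$. The Callias-induced operator on $\Sigma$ is then itself a product $\gamma(\partial_t+\tilde{A})$ on $S^1\times L$; by separation of variables both its kernel and cokernel are isomorphic to $\ker\tilde{A}$, so the index vanishes. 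This is where the essentially cylindrical hypothesis on $M$ and $M'$ actually does the work.
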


\begin{proof}
Since $\tilde{M}$ is a union of two essentially cylindrical manifolds, there exists a compact essential support $\tilde{K}\subset \tilde{M}$ of $\tilde{\DD}$ such that $\tilde{M}\backslash \tilde{K}$ is of the form $S^1\times N$. We can choose  $\tilde{K}$ to be large enough so that the restriction of $\tilde{\DD}$ to $S^1\times N$ is a product. We can also assume that $\tilde{K}$ has a smooth boundary $\Sigma=S^1\times L$. Then the Callias index theorem  \cite[Theorem~1.5]{Anghel93Callias} states that the index of $\tilde{\DD}$ is equal to the index of a certain operator $\p$ on $\Sigma$. Since all our structures are product on $\tilde{M}\backslash \tilde{K}$, the operator $\p$ is also a product on $\Sigma=S^1\times L$. Thus it has a form 
\[
	\p \ = \ \gamma\,\big(\p_t+ \tilde{A}\big),
\]
where $\tilde{A}$ is an operator on $L$. The kernel and cokernel of $\p$ can be computed by  separation of variables and both are easily seen to be isomorphic to the kernel of $\tilde{A}$. Thus the kernel and the cokernel are isomorphic and  $\ind\p=0$. 
\end{proof}

\subsection{Proof of Theorem~\ref{T:indep of D}}\label{SS:prindep of D}

Recall that we denote by $B_0$ and $B_1$ the APS boundary conditions for $\DD= \tilde{\DD}|_{M}$. Let $\DD''$ denote the restriction of $\tilde\DD$ to $-M'=\tilde{M}\backslash M$ and let $B_0'$ and $B_1'$ be the dual APS boundary conditions for $\DD''$ at $N_0$ and $N_1$ respectively. By the Splitting Theorem~\ref{T:splitting} 
\begin{equation}\label{E:tilD=D+D'}\notag
	\ind\tilde{\DD} \ = \ 
	\ind \DD_{B_0\oplus B_1}\ + \ \ind \DD''_{B'_0\oplus B'_1}.
\end{equation}
Since, by Lemma~\ref{L:indtildeD}, $\ind\tilde{\DD}=0$, we obtain 
\begin{equation}\label{E:D=-D''}
	\ind \DD_{B_0\oplus B_1}\ = \ -\,\ind \DD''_{B'_0\oplus B'_1}.
\end{equation}

By Lemma~\ref{L:tildeD}, $\DD''= -(\DD')^*$. Thus, by Remark~\ref{R:aps-dual aps}, $B_0'\oplus B_1'$ is equal to the adjoint boundary conditions for $-\DD'$. Hence, by \eqref{E:indD-indD*},
\[
	\ind \DD''_{B'_0\oplus B'_1}\ = \ \ind (-\DD')^*_{B'_0\oplus B'_1}\ = \ 
	-\, \ind \DD'_{B_0\oplus B_1}.
\]
Combining this equality with \eqref{E:D=-D''} we obtain \eqref{E:indep of D}. \hfill$\square$

\section{The relative $\eta$-invariant}\label{S:releta}

In the previous section we proved that the index of the APS boundary value problem $\DD_{B_0\oplus B_1}$ for a strongly Callias-type operator on an odd-dimensional  essentially cylindrical manifold depends only on the restriction of $\DD$ to the boundary, i.e. on the  operators $\AAA_0$ and $-\AAA_1$. In this section we use this index to define the {\em relative $\eta$-invariant} $\eta(\AAA_1,\AAA_0)$ and show that it has properties similar to the difference of $\eta$-invariants $\eta(\AAA_1)-\eta(\AAA_0)$ of operators on compact manifolds. For special cases, \cite{FoxHaskell05}, when the index can be computed using heat kernel asymptotics, we show that  $\eta(\AAA_1,\AAA_0)$ is indeed equal to the difference of the $\eta$-invariants of $\AAA_1$ and $\AAA_0$. In the next section we discuss the connection between the relative $\eta$-invariant and the spectral flow.

\subsection{Almost compact cobordisms}\label{SS:almost compact cobordism}
Let $N_0$ and $N_1$ be two complete {\em even-dimensional} Riemannian manifolds and let $\AAA_0$ and $\AAA_1$ be self-adjoint strongly Callias-type operators on $N_0$ and $N_1$ respectively, cf. Definition~\ref{D:sa_stronglyCallias}.

\begin{definition}\label{D:almost compact cobordism}
An {\em almost compact cobordism} between $\AAA_0$ and $\AAA_1$ is given by an essentially cylindrical manifold $M$ with $\pM=N_0\sqcup N_1$ and a strongly Callias-type operator $\DD$ on $M$ such that 
\begin{enumerate}
\item $M$ is an almost compact essential support of $\DD$;
\item $\DD$ is product near $\pM$;
\item 	The restriction of $\DD$ to $N_0$ is equal to $\AAA_0$ and the restriction of $\DD$ to $N_1$ is equal to $-\AAA_1$.
\end{enumerate}
If there exists an almost compact cobordism between $\AAA_0$ and $\AAA_1$ we say that operator $\AAA_0$ is {\em cobordant} to operator $\AAA_1$.
\end{definition}

\begin{lemma}\label{L:antisymmetry}
If\/ $\AAA_0$ is cobordant to $\AAA_1$ then $\AAA_1$ is cobordant to $\AAA_0$. 
\end{lemma}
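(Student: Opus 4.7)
The plan is to construct an almost compact cobordism between $\AAA_1$ and $\AAA_0$ starting from the given cobordism $(M,\DD)$. The natural candidate is to take $M' := M$ as the same smooth Riemannian manifold, but with the labels of the two boundary components interchanged and the cylindrical parameterization reversed accordingly: set $s := \varepsilon - t$ on the cylindrical part, so that $N_0' := N_1$ plays the role of the ``first'' boundary and $N_1' := N_0$ plays the role of the ``second''. This yields a valid essentially cylindrical structure on $M'$ with the correct boundary labels for a cobordism from $\AAA_1$ to $\AAA_0$.

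First I would analyze what operator on $M'$ delivers the required boundary restrictions. A direct computation using \eqref{E:productDD} shows that merely relabeling (i.e.\ taking $\DD' := \DD$ with swapped labels) produces a cobordism from $-\AAA_1$ to $-\AAA_0$, not from $\AAA_1$ to $\AAA_0$, because the inward normal at each boundary component reverses direction under the relabeling. To correct the signs, the idea is to reinterpret $-\DD$ as a strongly Callias-type operator with modified bundle data: keep the underlying bundle $E$ and connection $\n^E$, but replace the Clifford multiplication $c$ by $c' := -c$ and the Callias potential $\Phi$ by $\Phi' := -\Phi$. Then $D' := \sum c'(e_j)\n^E_{e_j} = -D$ is a Dirac operator for $(E,c',\n^E)$, and $\DD' := D' + i\Phi' = -\DD$ is a strongly Callias-type operator in the new bundle structure.

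Next I would check the conditions of Definition~\ref{D:almost compact cobordism}. Conditions (i) and (ii) follow from the fact that $[D',\Phi']=[D,\Phi]$ and $(\Phi')^2 = \Phi^2$, so the essential support of $\DD'$ coincides with that of $\DD$ and $M'$ remains an almost compact essential support. The product structure near $\pM'$ is preserved since the bundle modifications act pointwise and commute with $\partial_t$. The final, and most subtle, point is to verify that the restrictions of $\DD'$ to $N_0'$ and $N_1'$ — computed in the new bundle structure and using the reversed coordinate $s$ — come out to $\AAA_1$ and $-\AAA_0$ respectively.

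The main obstacle I expect is precisely this sign bookkeeping. A naive application of the above bundle swap yields boundary restrictions that differ from the desired $-\AAA$ by a $\#$-operation, i.e.\ conjugation by $c(\tau)$, owing to the identity $\AAA^\# = -c(\tau)\AAA c(\tau)^{-1}$ of \eqref{E:tildeA}. To convert $\AAA^\#$ into the required $-\AAA$, one needs an additional conjugation by a bundle automorphism that anticommutes with the tangential Dirac operator $A$ while commuting with $c(\tau)$. Since $\dim M$ is odd, the boundary $\pM$ is even-dimensional and admits a chirality (volume) operator $\omega_\pM = c(e_1)\cdots c(e_{n-1})$ with exactly these commutation properties, and this can be extended through the product collar to a bundle automorphism on a neighborhood of $\pM'$; absorbing this conjugation into the definition of $\DD'$ then produces the desired restrictions. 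The bulk of the proof consists of verifying that this combined modification preserves the strongly Callias-type property globally on $M'$ and yields the correct boundary data.
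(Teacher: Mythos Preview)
Your approach differs from the paper's and, as it stands, has a genuine gap. The paper does not stay on the same copy of $M$; instead it invokes the doubling construction already set up in Subsection~\ref{SS:tilDD}: form $\tilde M=M\cup_{\partial M}(-M)$, glue $E$ to itself over $\partial M$ using the isomorphism $c(\tau)$, and take $\tilde\DD$ as in Lemma~\ref{L:tildeD} with $\DD'=\DD$. Then $\DD'':=\tilde\DD|_{-M}$ is (via $\phi'$) equal to $-\DD^*$, and the $c(\tau)$--gluing is exactly what converts the $\AAA^\#$--type boundary restrictions of $-\DD^*$ into the desired $\AAA_1$ and $-\AAA_0$. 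The point is that the troublesome conjugation by $c(\tau)$ is packaged into a change of bundle trivialisation over $\partial M$, not into a global bundle automorphism of $E$ over $M$.

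The obstruction in your plan is precisely the step you flag as ``the bulk of the proof'': extending $\omega_{\partial M}=c(e_1)\cdots c(e_{n-1})$ (or, equivalently, $c(\tau)$) to a bundle automorphism on all of $M$ so that conjugation by it still yields a Dirac operator. This cannot be expected to work. On $M$ the element $\omega_{\partial M}$ anticommutes with the tangential $c(e_j)$ but commutes with $c(\tau)$; away from the collar there is no distinguished ``$\tau$--direction'', so any global extension will neither commute nor anticommute with the full Clifford action, and $\gamma D\gamma^{-1}$ will not be a Dirac operator for any Clifford structure. The same objection applies to $c(\tau)$ itself, which is only defined on the collar. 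So ``absorbing this conjugation into the definition of $\DD'$'' is not a matter of verification---there is no candidate global operator to verify.

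The clean way out is the paper's: recognise that what you really need is not a global automorphism of $E$ over $M$, but merely a choice of isomorphism between the boundary bundle of the new cobordism and the given $E_{N_j}$. Working on $-M$ with Clifford multiplication $-c$ and potential $\Phi$ gives the globally defined strongly Callias-type operator $-\DD^*$; its boundary restrictions are $-\AAA_j^\#$, and identifying the boundary bundles via $c(\tau)$ (which is perfectly legitimate as part of the cobordism data, cf.\ \eqref{E:tildeA}) turns these into $\AAA_1$ and $-\AAA_0$. That is exactly what the doubling construction in Subsection~\ref{SS:tilDD} encodes.
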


\begin{proof}
Let $-M$ denote the manifold $M$ with the opposite orientation and let $\tilde{M}:= M\cup_\pM(-M)$ denote the {\em double} of $M$. Let $\DD$ be an almost compact cobordism between $\AAA_0$ and $\AAA_1$. Using the construction of  Section~\ref{SS:tilDD} (with $\DD'= \DD$) we obtain a strongly Callias-type operator $\tilde{\DD}$ on $\tilde{M}$ whose restriction to $M$ is isometric to $\DD$. Let $\DD''$ denote the restriction of $\tilde{\DD}$ to $-M=\tilde{M}\backslash{}M$. Then the restriction of $\DD''$ to $N_1$ is equal to $\AAA_1$ and the restriction of $\DD''$ to $N_0$ is equal to $-\AAA_0$. Hence, $\DD''$ is an almost compact cobordism between $\AAA_1$ and $\AAA_0$. 
\end{proof}

\begin{lemma}\label{L:transitivity}
Let $\AAA_0,\AAA_1$ and $\AAA_2$ be self-adjoint strongly Callias-type operators on even-dimensional complete Riemannian manifolds $N_0, N_1$ and $N_2$ respectively. 
Suppose $\AAA_0$ is cobordant to $\AAA_1$ and $\AAA_1$ is cobordant to $\AAA_2$. Then $\AAA_0$ is cobordant to $\AAA_2$.
\end{lemma}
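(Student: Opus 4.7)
The plan is to construct an almost compact cobordism $(M, \DD)$ between $\AAA_0$ and $\AAA_2$ by gluing the two given cobordisms $(M_{01}, \DD_{01})$ (between $\AAA_0$ and $\AAA_1$) and $(M_{12}, \DD_{12})$ (between $\AAA_1$ and $\AAA_2$) along their common boundary component $N_1$.

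I would first form $M := M_{01} \cup_{N_1} M_{12}$ as a smooth Riemannian manifold. Since both $\DD_{01}$ and $\DD_{12}$ are product near their boundaries, a neighborhood of $N_1$ in $M_{01}$ is isometric to $(-\delta, 0]\times N_1$ and a neighborhood of $N_1$ in $M_{12}$ is isometric to $[0, \delta)\times N_1$; these glue along $\{0\}\times N_1$ to an open collar $(-\delta, \delta)\times N_1 \subset M$, so $M$ inherits a Riemannian metric with boundary $\pM = N_0 \sqcup N_2$. To see $M$ is essentially cylindrical, I would choose compact sets $K_{01}\subset M_{01}$ and $K_{12}\subset M_{12}$ realizing the cylindrical decompositions of Definition~\ref{D:esscylindrical}. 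After enlarging them so that the two cylinder bases agree (both equal to a common open subset $N$ of $N_1$ outside a compact piece), the complement $M\setminus (K_{01}\cup K_{12})$ is isometric to $[0, \varepsilon_{01}+\varepsilon_{12}]\times N$, with $N_0$ at $t=0$ and $N_2$ at $t=\varepsilon_{01}+\varepsilon_{12}$.

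Next I would glue the Dirac bundles and the operators. The Dirac bundles $E_{01}\to M_{01}$ and $E_{12}\to M_{12}$ are product in the two collars near $N_1$, and by hypothesis their restrictions to $N_1$ are canonically identified as the bundle on which $\AAA_1$ acts; together with the matching Clifford multiplication, Hermitian metric, and Clifford connection, this identifies them on the entire collar and produces a smooth Dirac bundle $E\to M$. Using the sign convention of Subsection~\ref{SS:M=M1}, the $M_{01}$-collar reads $\DD_{01} = c(dt)(\partial_t + \AAA_1)$ because the restriction at $N_1$ (viewed as the ``right'' boundary of $M_{01}$) is $-\AAA_1$, while the $M_{12}$-collar reads $\DD_{12} = c(dt)(\partial_t + \AAA_1)$ because the restriction at $N_1$ (viewed as the ``left'' boundary of $M_{12}$) is $+\AAA_1$. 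The opposite signs in the two cobordism data are exactly what makes the two product expressions agree on the full collar $(-\delta,\delta)\times N_1$, so $\DD_{01}$ and $\DD_{12}$ glue to a smooth first-order operator $\DD$ on $M$; the Callias potentials glue for the same reason, and $\DD$ is strongly Callias-type because the defining estimate \eqref{E:strinvinf} holds locally on each piece.

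Finally I would verify the three axioms of Definition~\ref{D:almost compact cobordism} for $(M,\DD)$: (i) $M$ is an almost compact essential support of $\DD$, since the union of compact essential supports of $\DD_{01}$ and $\DD_{12}$ is a compact essential support of $\DD$ contained in $M$, and the cylindrical structure outside a compact set was checked above; (ii) $\DD$ is product near $\pM=N_0\sqcup N_2$, inherited unchanged from the product structures of $\DD_{01}$ near $N_0$ and $\DD_{12}$ near $N_2$; (iii) the restriction of $\DD$ to $N_0$ is $\AAA_0$ and its restriction to $N_2$ is $-\AAA_2$, as both are unaffected by the gluing. The only real subtlety is the sign compatibility at $N_1$ discussed in the previous paragraph — once that is arranged, the remaining verifications are routine consequences of gluing two manifolds along matching cylindrical collars.
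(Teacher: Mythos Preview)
Your proposal is correct and follows exactly the same approach as the paper: glue the two given cobordisms along their common boundary component $N_1$ to obtain an almost compact cobordism between $\AAA_0$ and $\AAA_2$. The paper's proof is a three-line sketch of this construction, whereas you have supplied the routine verifications (essential cylindricality of the glued manifold, sign compatibility at $N_1$, and the axioms of Definition~\ref{D:almost compact cobordism}) that the paper leaves implicit.
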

\begin{proof}
Let $M_1$ and $M_2$ be essentially  cylindrical manifolds such that $\pM_1= N_0\sqcup N_1$ and $\pM_2= N_1\sqcup N_2$. Let $\DD_1$ be an operator on $M_1$ which is an almost compact cobordism between $\AAA_0$ and $\AAA_1$. Let $\DD_2$ be an operator on $M_2$ which is an almost compact cobordism between $\AAA_1$ and $\AAA_2$. Then the operator $\DD_3$ on $M_1\cup_{N_1} M_2$ whose restriction to $M_j$ ($j=1,2$) is equal to $\DD_j$ is an almost compact cobordism between $\AAA_0$ and $\AAA_2$. 
\end{proof}

If follows  from Lemmas~\ref{L:antisymmetry} and  \ref{L:transitivity} that {\em cobordism  is an equivalence relation on the set of self-adjoint strongly Callias-type operators}. 
\begin{definition}\label{D:releta}
Suppose $\AAA_0$ and $\AAA_1$ are cobordant self-adjoint strongly Callias-type operators and let $\DD$ be an almost compact cobordism between them. Let $B_0= H_{(-\infty,0)}^{1/2}(\AAA_0)$ and $B_1= H_{(-\infty,0)}^{1/2}(-\AAA_1)$ be the APS boundary conditions for $\DD$. The {\em relative $\eta$-invariant} is defined as 
\begin{equation}\label{E:releta}
	\eta(\AAA_1,\AAA_0) \ = \  2\,\ind \DD_{B_0\oplus B_1}
	 \ + \ \dim\ker \AAA_0\ + \ \dim\ker \AAA_1.
\end{equation}
\end{definition}
Theorem~\ref{T:indep of D}  implies that  $\eta(\AAA_1,\AAA_0)$ is independent of the choice of the cobordism $\DD$.

\begin{remark}\label{R:rel eta}
Sometimes it is convenient to use the dual APS boundary conditions $\oB_0= H_{(-\infty,0]}^{1/2}(\AAA_0)$ and $\oB_2= H_{(-\infty,0]}^{1/2}(-\AAA_1)$ instead of $B_0$ and $B_1$. It follows from Corollary~\ref{C:change of B} that the relative $\eta$-invariant can be written as
\begin{equation}\label{E:B1'B2'}
		\eta(\AAA_1,\AAA_0) \ = \  2\,\ind \DD_{\oB_0\oplus \oB_1}
	 \ - \ \dim\ker \AAA_0\ - \ \dim\ker \AAA_1.
\end{equation}
\end{remark}

\subsection{The case when the heat kernel has an asymtotic expansion}\label{SS:FoxHaskell}
In \cite{FoxHaskell05}, Fox and Haskell studied the index of a boundary value problem on manifolds of bounded geometry. They showed that under certain conditions (satisfied for natural operators on manifolds with conical or cylindrical ends) on $M$ and $\DD$,  the heat kernel $e^{-t(\DD_B)^*\DD_B}$ is of trace class and its trace has an asymptotic expansion similar to the one on compact manifolds. In this case the $\eta$-function, defined by a usual formula 
\[
		\eta(s;\AAA) \ := \ \sum_{\lambda\in{\rm spec}(\AAA)} 
		{\rm sign}(\lambda)\,|\lambda|^s, 
		\qquad \operatorname{Re} s \ll 0,
\] 
is an analytic function of $s$, which has a meromorphic continuation to the whole complex plane and is regular at 0. So one can define the $\eta$-invariant of $\AAA$ by $\eta(\AAA)= \eta(0;\AAA)$.

\begin{proposition}\label{P:FoxHaskell}
Suppose now that $\DD$ is an operator on an essentially cylindrical manifold $M$ which satisfies the conditions of\, \cite{FoxHaskell05}. We also assume that $\DD$ is product near $\pM=N_0\sqcup N_1$ and that $M$ is an almost compact essential support for $\DD$. Let $\AAA_0$ and $-\AAA_1$ be the restrictions of $\DD$ to $N_0$ and $N_1$ respectively. Let $\eta(\AAA_j)$ ($j=0,1$) be the $\eta$-invariant of $\AAA_j$.   Then 
\begin{equation}\label{E:FoxHaskell}
	\eta(\AAA_1,\AAA_0) \ = \ \eta(\AAA_1)\ - \ \eta(\AAA_0).
\end{equation}
\end{proposition}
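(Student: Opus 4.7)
The plan is to deduce the claim from Fox and Haskell's APS-type index formula for $\DD_{B_0\oplus B_1}$ combined with a parity vanishing of the bulk contribution in odd dimensions. Under the hypotheses of \cite{FoxHaskell05}, the heat operators $e^{-t\DD_{B_0\oplus B_1}^*\DD_{B_0\oplus B_1}}$ and $e^{-t\DD_{B_0\oplus B_1}\DD_{B_0\oplus B_1}^*}$ are trace class with the standard APS small-$t$ asymptotic expansion, and the same hypotheses give that $\AAA_0 e^{-t\AAA_0^2}$ and $\AAA_1 e^{-t\AAA_1^2}$ are trace class, so both $\eta(\AAA_0)$ and $\eta(\AAA_1)$ are defined via the usual analytic continuation.

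The first step is to write Fox-Haskell's APS formula explicitly in our two-boundary setting. Splitting the small-$t$ expansion of $\Tr\bigl(e^{-t\DD_{B_0\oplus B_1}^*\DD_{B_0\oplus B_1}}-e^{-t\DD_{B_0\oplus B_1}\DD_{B_0\oplus B_1}^*}\bigr)$ into a bulk and a boundary contribution from each of $N_0, N_1$, and using that $\DD$ is product near $\pM$ with $\DD|_{N_1}=-\AAA_1$, yields
\[
\ind \DD_{B_0\oplus B_1}\;=\;\int_M\alpha(x)\,dV\;-\;\tfrac12\bigl(\eta(\AAA_0)+h_0\bigr)\;-\;\tfrac12\bigl(\eta(-\AAA_1)+h_1\bigr),
\]
where $h_j:=\dim\ker\AAA_j$ and $\alpha(x)$ is the constant-in-$t$ coefficient of the pointwise small-$t$ expansion of $\Tr\bigl(k_{\DD^*\DD}(t,x,x)-k_{\DD\DD^*}(t,x,x)\bigr)$, understood after subtracting off the boundary contributions as in Fox-Haskell.

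The second step is to show that $\int_M\alpha\,dV=0$. A Getzler-rescaling calculation identifies $\alpha(x)$ pointwise with the top-degree component of an even-degree differential form on $M$ built polynomially from the Riemann curvature, the twisting curvature of $\nabla^E$, and the bundle endomorphisms $\Phi$ and $[D,\Phi]$; since $\Phi$ commutes with Clifford multiplication these enter only in even Clifford degree and cannot supply the odd factor needed to produce a top-degree form on $M$. An even-degree form on an odd-dimensional manifold has no top-degree component, so $\alpha\equiv 0$ pointwise. This is exactly the parity argument that the authors already invoke in their parenthetical remark about the compact odd-dimensional APS case.

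Combining the two steps and using $\eta(-\AAA_1)=-\eta(\AAA_1)$ simplifies the formula to
\[
\ind \DD_{B_0\oplus B_1}\;=\;\tfrac12\bigl(\eta(\AAA_1)-\eta(\AAA_0)\bigr)\;-\;\tfrac12(h_0+h_1),
\]
and substituting into Definition~\ref{D:releta} gives $\eta(\AAA_1,\AAA_0)=\eta(\AAA_1)-\eta(\AAA_0)$. The main obstacle is Step two: one must check that the Getzler-rescaling argument, standard in the compact-boundary Dirac case, carries over to the non-compact Callias-type setting of Fox and Haskell with uniform-in-$x$ short-time estimates strong enough that the pointwise vanishing $\alpha\equiv 0$ really translates into the vanishing of the renormalized bulk integral $\int_M\alpha\,dV$; these estimates are precisely the content of the ``strong conditions'' in \cite{FoxHaskell05}, after which the parity argument goes through as in the compact case.
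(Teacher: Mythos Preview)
Your proposal is correct and follows essentially the same approach as the paper: apply the Fox--Haskell APS-type index formula to $\DD_{B_0\oplus B_1}$, use $\eta(-\AAA_1)=-\eta(\AAA_1)$, and plug into Definition~\ref{D:releta}. The paper's proof is in fact terser than yours---it simply asserts that the integral term vanishes identically in odd dimensions and cites Theorem~9.6 of \cite{FoxHaskell05}---so your Getzler-rescaling discussion supplies detail that the paper omits rather than deviating from it.
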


\begin{proof}
Theorem~9.6 of \cite{FoxHaskell05} establishes an index theorem for the APS boundary value problem satisfying conditions discussed above. This theorem is completely analogous to the classical APS index theorem \cite{APS1}. In \cite{FoxHaskell05} only the case of even-dimensional manifolds is discussed. However, exactly the same (but somewhat simpler) arguments give an index theorem on  odd-dimensional manifolds as well. In the odd-dimensional case the integral term in the index formula vanishes identically. Thus, applied to our situation, Theorem~9.6 of \cite{FoxHaskell05} gives 
\[
	\ind \DD_{B_0\oplus B_1}\ =\ 
	-\,\frac{\dim\ker \AAA_0+\eta(\AAA_0)}{2} \ - \
	 \frac{\dim\ker \AAA_1+\eta(-\AAA_1)}{2}.
\]
Since $\eta(-\AAA_1)= -\eta(\AAA_1)$ equation \eqref{E:FoxHaskell} follows now from the definition \eqref{E:releta} of the relative $\eta$-invariant. 
\end{proof}

More generally, Bunke, \cite{Bunke92}, considered the situation when $\AAA_je^{-t\AAA_j^2}$  ($j=0,1$) are not of trace class but their difference $\AAA_1e^{-t\AAA_1^2}- \AAA_0e^{-t\AAA_0^2}$ is of trace class and its trace has a nice asymptotic expansion. In this situation one can define the relative $\eta$-function by the usual formula
\begin{equation}\label{E:reletafunction}
		\eta(s;\AAA_1,\AAA_0) \ := \
	\frac{1}{\Gamma\big((s+1)/2\big)}\,\int_0^\infty\,
	  t^{\frac{s-1}{2}}\,
	     \Tr\,\big(\, \AAA_1e^{-t\AAA_1^2}- \AAA_0e^{-t\AAA_0^2}\,\big) \,dt.
\end{equation}
(See \cite{Muller98} for even more general situation when the relative $\eta$-function can be defined.) 

Bunke only considered the undeformed Dirac operator $A$ and gave a geometric condition under which $\Tr(A_1e^{-tA_1^2}- A_0e^{-tA_0^2})$ has a nice asymptotic expansion and the  above integral gives a meromorphic function regular at 0. One can also consider the cases when the heat kernels of the Callias-type operators $\AAA_j$ are such that $\Tr(\AAA_1e^{-t\AAA_1^2}- \AAA_0e^{-t\AAA_0^2})$ has a nice asymptotic expansion and the relative $\eta$-function can be defined using \eqref{E:reletafunction}.

\begin{conjecture}\label{Con:releta}
If the relative $\eta$-function \eqref{E:reletafunction} is defined, analytic and regular at 0, then 
\begin{equation}\label{E:reletaconjecture}
	\eta(\AAA_1,\AAA_0) \ = \ \eta(0;\AAA_1,\AAA_0).
\end{equation}
\end{conjecture}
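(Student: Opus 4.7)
The plan is to generalize the Fox--Haskell proof of Proposition \ref{P:FoxHaskell} from the bounded-geometry setting (where the individual heat traces $\Tr(\AAA_je^{-t\AAA_j^2})$ admit short-time asymptotic expansions) to the relative setting of Bunke \cite{Bunke92}, in which only the difference $\Tr(\AAA_1e^{-t\AAA_1^2} - \AAA_0e^{-t\AAA_0^2})$ is assumed to have the required properties. Using Definition \ref{D:releta}, the identity to be proved reduces to
\[
	2\,\ind \DD_{B_0\oplus B_1}\ +\ \dim\ker\AAA_0\ +\ \dim\ker\AAA_1 \ =\ \eta(0;\AAA_1,\AAA_0),
\]
for any almost compact cobordism $\DD$ between $\AAA_0$ and $\AAA_1$.

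First I would form a complete boundaryless manifold $\hat M := ((-\infty,0]\times N_0) \cup_{N_0} M \cup_{N_1} ([0,\infty)\times N_1)$ and extend $\DD$ to a strongly Callias-type operator $\hat\DD$ on $\hat M$ using its product form near $\pM$. A standard argument (cf. \cite{APS1}) identifies $\ind\DD_{B_0\oplus B_1}$ with the $L^2$-index of $\hat\DD$, up to the $\dim\ker\AAA_j$ corrections coming from extended $L^2$ solutions at the two cylindrical ends. Assuming that the trace class property of the relative boundary heat operator propagates to the bulk, the relative heat supertrace $\sigma(t) := \Tr(e^{-t\hat\DD^*\hat\DD} - e^{-t\hat\DD\hat\DD^*})$ is well-defined, and the Callias-type structure together with discreteness of the spectra of $\AAA_j$ yields a McKean--Singer identity $\ind_{L^2}\hat\DD = \lim_{t\to\infty}\sigma(t)$.

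The next step is the short-time analysis. As $t\to 0^+$, Duhamel's principle should decompose $\sigma(t)$ into (i) an interior local index integral, which vanishes identically since $\dim M$ is odd, and (ii) boundary contributions from the two cylindrical ends. Using the product structure $\hat\DD = c(\tau)(\p_t + \AAA_j)$ and separation of variables in $t$, the contribution from the $N_j$-end reduces, modulo exponentially small errors, to an integral transform of the heat kernel of $\AAA_j$. The divergent individual contributions from $N_0$ and $N_1$ are expected to cancel because of the opposite sign of $\p_t$, leaving a finite expression governed by $\Tr(\AAA_1e^{-t\AAA_1^2} - \AAA_0e^{-t\AAA_0^2})$. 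Combining this with the Mellin representation \eqref{E:reletafunction} of $\eta(s;\AAA_1,\AAA_0)$, its regularity at $s=0$, and the kernel corrections extracted from the $t\to\infty$ limit then produces the desired equality.

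The main obstacle is justifying that $\sigma(t)$ on $\hat M$ is trace class and admits the claimed $t\to 0^+$ asymptotic expansion, starting only from the boundary hypothesis. Fox--Haskell obtain the trace class property for each individual $e^{-t\hat\DD^*\hat\DD}$ by means of strong global geometric assumptions which we wish to avoid. The strategy is to express the bulk heat kernel difference along $\hat M$ via Duhamel's formula in terms of the boundary heat kernel difference on $N_0\sqcup N_1$ and a fundamental solution on the compact core of $M$ (which is automatically trace class by compactness of the core and finite propagation). The trace class property and small-$t$ asymptotics should thereby propagate from the boundary to $\hat M$. Carrying out this propagation rigorously, and checking that no new local terms arise from the non-compact transverse directions in $N$ that could spoil the matching with \eqref{E:reletafunction}, is the technical heart of the proof.
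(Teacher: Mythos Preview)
The statement you are attempting to prove is stated in the paper as a \emph{conjecture}, not as a theorem. The paper provides no proof of Conjecture~\ref{Con:releta}; it is left open. There is therefore no ``paper's own proof'' to compare your proposal against.

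Your outline is a reasonable strategy and follows the natural template suggested by the classical APS theory and by the paper's proof of Proposition~\ref{P:FoxHaskell}: pass to the manifold $\hat M$ with cylindrical ends, identify $\ind\DD_{B_0\oplus B_1}$ with an $L^2$-index, compute the relative supertrace $\sigma(t)$, and use the odd-dimensionality to kill the local interior term so that only the boundary $\eta$-type contribution survives. However, the obstacle you yourself identify in your final paragraph --- proving that $\sigma(t)$ is trace class on $\hat M$ and has the required $t\to 0^+$ asymptotics starting only from the \emph{boundary} trace-class hypothesis of \eqref{E:reletafunction} --- is precisely the reason this remains a conjecture rather than a theorem. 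Your Duhamel-propagation heuristic (transferring trace-class from $N_0\sqcup N_1$ to $\hat M$ via a compact core) is plausible but not a proof: the ``compact core'' of an essentially cylindrical manifold is not compact in the transverse $N$-direction, and controlling the off-diagonal heat kernel uniformly along that non-compact direction, with no bounded-geometry assumption, is exactly the analytical difficulty that neither Bunke's nor Fox--Haskell's framework handles in this generality. Until that step is carried out rigorously, the argument remains a sketch rather than a proof.
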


\subsection{Basic properties of the relative $\eta$-invariant}\label{SS:properties eta}

Proposition~\ref{P:FoxHaskell} shows that under certain conditions the $\eta$-invariants of $\AAA_0$ and $\AAA_1$ are defined and $\eta(\AAA_1,\AAA_0)$ is their difference. We now show that in general case, when $\eta(\AAA_0)$ and $\eta(\AAA_1)$ do not necessarily exist,  $\eta(\AAA_1,\AAA_0)$ behaves like it was a difference of an invariant of $N_1$ and an invariant of $N_0$.

\begin{proposition}[Antisymmetry]\label{P:antisymmetry eta}	
Suppose $\AAA_0$ and $\AAA_1$ are cobordant self-adjoint strongly Callias-type operators. Then 
\begin{equation}\label{E:antisymmetry}
	\eta(\AAA_0,\AAA_1)\ = \ -\,\eta(\AAA_1,\AAA_0).
\end{equation}
\end{proposition}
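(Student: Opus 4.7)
The plan is to deduce the antisymmetry by doubling the cobordism and invoking the Splitting Theorem, in the spirit of the proof of Theorem~\ref{T:indep of D}. Fix an almost compact cobordism $\DD$ on $M$ from $\AAA_0$ to $\AAA_1$, with APS boundary conditions $B_0 = H^{1/2}_{(-\infty,0)}(\AAA_0)$ at $N_0$ and $B_1 = H^{1/2}_{(-\infty,0)}(-\AAA_1)$ at $N_1$, so that by definition
\[
\eta(\AAA_1,\AAA_0)\ =\ 2\,\ind\DD_{B_0\oplus B_1}\ +\ \dim\ker\AAA_0\ +\ \dim\ker\AAA_1.
\]
By Lemma~\ref{L:antisymmetry} the restriction $\DD''$ to $-M$ of the glued operator $\tilde\DD$ on $\tilM = M\cup_{\pM}(-M)$ is an almost compact cobordism from $\AAA_1$ to $\AAA_0$. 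After the orientation reversal, $N_1$ plays the role of the ``left'' boundary of $-M$ (with restriction $\AAA_1$) and $N_0$ of the ``right'' boundary (with restriction $-\AAA_0$), so the APS conditions for $\DD''$ are $\tilde B_0 = H^{1/2}_{(-\infty,0)}(\AAA_1)$ at $N_1$ and $\tilde B_1 = H^{1/2}_{(-\infty,0)}(-\AAA_0)$ at $N_0$. By definition, $\eta(\AAA_0,\AAA_1) = 2\,\ind \DD''_{\tilde B_0\oplus \tilde B_1} + \dim\ker\AAA_1 + \dim\ker\AAA_0$.

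Next I will apply the Splitting Theorem~\ref{T:splitting} to $\tilM$, cut along $\pM = N_0\sqcup N_1$, placing the APS conditions $B_0 \oplus B_1$ on the $M$-side and the dual APS conditions $\oB_0 = H^{1/2}_{(-\infty,0]}(-\AAA_0)$ at $N_0$ and $\oB_1 = H^{1/2}_{(-\infty,0]}(\AAA_1)$ at $N_1$ on the $-M$-side. Since $\ind\tilde\DD = 0$ by Lemma~\ref{L:indtildeD}, this yields
\[
0\ =\ \ind\DD_{B_0\oplus B_1}\ +\ \ind\DD''_{\oB_0\oplus \oB_1}.
\]
Corollary~\ref{C:change of B}, applied once at each boundary component of $\DD''$ (each replacement of APS by dual APS at $N_j$ increases the index by $\dim\ker\AAA_j$), then gives
\[
\ind\DD''_{\oB_0\oplus \oB_1}\ =\ \ind\DD''_{\tilde B_0\oplus \tilde B_1}\ +\ \dim\ker\AAA_0\ +\ \dim\ker\AAA_1.
\]

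Combining the two displayed identities yields $\ind\DD''_{\tilde B_0\oplus \tilde B_1} = -\ind\DD_{B_0\oplus B_1} - \dim\ker\AAA_0 - \dim\ker\AAA_1$, and substituting into the expression for $\eta(\AAA_0,\AAA_1)$ gives the desired identity $\eta(\AAA_0,\AAA_1) = -\eta(\AAA_1,\AAA_0)$. The main bookkeeping obstacle is tracking how the ``left'' and ``right'' roles of $N_0$ and $N_1$ swap when one passes to $-M$, and noting that the APS condition for $\DD''$ at the right boundary $N_0$ must be written in terms of the restriction operator $-\AAA_0$, i.e. equals $H^{1/2}_{(0,\infty)}(\AAA_0)$ rather than $H^{1/2}_{(-\infty,0)}(\AAA_0)$. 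Once this is pinned down, Corollary~\ref{C:change of B} bridges APS and dual APS cleanly and the rest is a one-line substitution.
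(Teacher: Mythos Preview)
Your proof is correct and follows essentially the same route as the paper: double $M$ to obtain $\tilde M$, use the Splitting Theorem together with $\ind\tilde\DD=0$ (Lemma~\ref{L:indtildeD}) to get $\ind\DD_{B_0\oplus B_1}+\ind\DD''_{\oB_0\oplus\oB_1}=0$, and then pass between APS and dual APS conditions. The only cosmetic difference is that the paper quotes the already-established identity \eqref{E:D=-D''} and the dual-APS formulation \eqref{E:B1'B2'} of Remark~\ref{R:rel eta}, whereas you re-derive both of these from the Splitting Theorem and Corollary~\ref{C:change of B} directly.
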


\begin{proof}
Let $\DD$ be an almost compact cobordism between $\AAA_0$ and $\AAA_1$ and let $\tilde{\DD}$ and $\DD''$ be as in the proof of Lemma~\ref{L:antisymmetry}. Then $\tilde{\DD}$ is a strongly Callias-type operator on a complete Riemannian manifold $\tilde{M}$ without boundary and $\DD''$ is an almost compact cobordism between $\AAA_1$ and $\AAA_0$. 

Let   
\[
	\begin{aligned}
	B_0' \ &= \ H^{1/2}_{[0,\infty)}(\AAA_0)\ = \  H^{1/2}_{(-\infty,0]}(-\AAA_0);\\
	B_1' \ &= \  H^{1/2}_{[0,\infty)}(-\AAA_1) \ = \ H^{1/2}_{(-\infty,0]}(\AAA_1).
	\end{aligned}
\] 
be the dual APS boundary conditions for $\DD''$.  It is shown in Section~\ref{SS:prindep of D} that $B_0'\oplus B_1'$ is an elliptic boundary condition for $\DD''$ and, by \eqref{E:D=-D''}, 
\begin{equation}\label{E:indD''=indD}
		\ind \DD''_{B_0^{\prime}\oplus B_1^{\prime}} \ = \ 
	-\ind \DD_{B_0\oplus B_1}.
\end{equation}

Since $\DD''$ is an almost compact cobordism between $\AAA_1$ and $\AAA_0$ we conclude  from \eqref{E:B1'B2'} that
\begin{equation}\label{E:etaA0A1}
	\eta(\AAA_0,\AAA_1)\ = \ 
	2\,\ind \DD''_{B_0^\prime\oplus B_1^\prime} 
	\ - \ \dim\ker \AAA_0\ - \ \dim\ker \AAA_1.
\end{equation}
Combining  \eqref{E:etaA0A1} and \eqref{E:indD''=indD} we obtain \eqref{E:antisymmetry}.
\end{proof}

Note that \eqref{E:antisymmetry} implies that 
\begin{equation}\label{E:etaAA}
	\eta(\AAA,\AAA)\ = \ 0
\end{equation}
for every self-adjoint strongly Callias-type operator $\AAA$. 

\begin{proposition}[The cocycle condition]\label{P:cocycle}
Let $\AAA_0,\AAA_1$ and $\AAA_2$ be self-adjoint strongly Callias-type operators which are cobordant to each other. Then 
\begin{equation}\label{E:cocycle}
	\eta(\AAA_2,\AAA_0)\ = \ \eta(\AAA_2,\AAA_1)\ + \ \eta(\AAA_1,\AAA_0).
\end{equation}
\end{proposition}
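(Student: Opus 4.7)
The plan is to glue two cobordisms together and apply the Splitting Theorem~\ref{T:splitting}, then correct for a small mismatch between the boundary conditions produced by the splitting and the ones used in the definition of the relative $\eta$-invariant.

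More precisely, let $\DD_{01}$ on $M_{01}$ be an almost compact cobordism between $\AAA_0$ and $\AAA_1$, and let $\DD_{12}$ on $M_{12}$ be an almost compact cobordism between $\AAA_1$ and $\AAA_2$. As in the proof of Lemma~\ref{L:transitivity}, I would glue these along $N_1$ to form an almost compact cobordism $\DD_{02}$ on $M_{02} := M_{01} \cup_{N_1} M_{12}$ between $\AAA_0$ and $\AAA_2$. Since the restriction of $\DD_{01}$ to $N_1$ (as the right boundary) is $-\AAA_1$ and the restriction of $\DD_{12}$ to $N_1$ (as the left boundary) is $\AAA_1$, the cylindrical structures match so that $\DD_{02}$ is well defined and product near $N_1$.

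Now I would apply the Splitting Theorem~\ref{T:splitting} to $M_{02}$, cut along $N_1$. Using the APS boundary conditions $B_0 = H^{1/2}_{(-\infty,0)}(\AAA_0)$ at $N_0$ and $B_2 = H^{1/2}_{(-\infty,0)}(-\AAA_2)$ at $N_2$, together with the APS condition $H^{1/2}_{(-\infty,0)}(-\AAA_1) = H^{1/2}_{(0,\infty)}(\AAA_1)$ on the $M_{01}$-side of $N_1$ and the \emph{dual} APS condition $H^{1/2}_{(-\infty,0]}(\AAA_1)$ on the $M_{12}$-side of $N_1$, the splitting theorem yields
\[
	\ind (\DD_{02})_{B_0 \oplus B_2}
	\ = \ \ind (\DD_{01})_{B_0 \oplus H^{1/2}_{(-\infty,0)}(-\AAA_1)}
	\ + \ \ind (\DD_{12})_{H^{1/2}_{(-\infty,0]}(\AAA_1) \oplus B_2}.
\]
The first summand on the right is exactly the index used in Definition~\ref{D:releta} for $\eta(\AAA_1,\AAA_0)$. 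The second summand uses the dual APS condition at $N_1$ rather than the APS condition $H^{1/2}_{(-\infty,0)}(\AAA_1)$ that enters the definition of $\eta(\AAA_2,\AAA_1)$. By Corollary~\ref{C:change of B}, these differ by $\dim\ker\AAA_1$:
\[
	\ind (\DD_{12})_{H^{1/2}_{(-\infty,0]}(\AAA_1) \oplus B_2}
	\ = \
	\ind (\DD_{12})_{H^{1/2}_{(-\infty,0)}(\AAA_1) \oplus B_2}
	\ + \ \dim\ker\AAA_1.
\]

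Finally, I would substitute and multiply by $2$, then add $\dim\ker\AAA_0 + \dim\ker\AAA_2$ to both sides. The left-hand side becomes $\eta(\AAA_2,\AAA_0)$ by \eqref{E:releta}. On the right-hand side, the two contributions $2\ind(\DD_{01})_{\cdots} + \dim\ker\AAA_0 + \dim\ker\AAA_1$ and $2\ind(\DD_{12})_{\cdots} + \dim\ker\AAA_1 + \dim\ker\AAA_2$, together with the extra $2\dim\ker\AAA_1$ coming from the correction above, combine to give exactly $\eta(\AAA_1,\AAA_0) + \eta(\AAA_2,\AAA_1)$, proving \eqref{E:cocycle}.

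The main subtlety, and the only nonroutine bookkeeping, is the $\dim\ker\AAA_1$ correction: the Splitting Theorem as stated forces APS on one side of the cut and dual APS on the other, whereas the relative $\eta$-invariant is defined using APS on both ends of each cobordism. The contribution $2\dim\ker\AAA_1$ from this mismatch is precisely what makes the two copies of $\dim\ker\AAA_1$ on the right-hand side appear, so the identity balances exactly; there is no additional hidden term to control.
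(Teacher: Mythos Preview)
Your proposal is correct and follows exactly the approach the paper takes: apply the Splitting Theorem~\ref{T:splitting} to the glued cobordism $\DD_3$ from Lemma~\ref{L:transitivity}, then use Corollary~\ref{C:change of B} to pass from the dual APS condition produced by the splitting to the APS condition in Definition~\ref{D:releta}. The paper states this in one line and leaves the $\dim\ker\AAA_1$ bookkeeping implicit; you have spelled it out correctly (the wording in your last paragraph is slightly tangled, but the arithmetic is right: the extra $2\dim\ker\AAA_1$ from the correction supplies exactly the two copies of $\dim\ker\AAA_1$ needed to form $\eta(\AAA_1,\AAA_0)$ and $\eta(\AAA_2,\AAA_1)$).
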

\begin{proof}
The lemma follows from the Splitting Theorem~\ref{T:splitting} applied to the operator $\DD_3$ constructed in the proof of Lemma~\ref{L:transitivity}. 
\end{proof}

\section{The spectral flow}\label{S:sp flow}

Atiyah, Patodi and Singer, \cite{APS3}, introduced a notion of spectral flow $\spf(\AA)$ of a continuous family $\AA:= \{\AAA^s\}_{0\le s\le 1}$ of self-adjoint differential operators on a closed manifold. They showed that the spectral flow  computes the variation of the $\eta$-invariant $\eta(\AAA^1)-\eta(\AAA^0)$. In this section we consider a family of self-adjoint strongly Callias-type operators $\AA= \{\AAA^s\}_{0\le s\le 1}$ on a complete {\em even-dimensional} Riemannian manifold and show that for any operator $\AAA_0$ cobordant to $\AAA^0$ we have $\eta(\AAA^1,\AAA_0)- \eta(\AAA^0,\AAA_0)= 2\,\spf(\AA)$.

\subsection{A family of boundary operators}\label{SS:familyAA}
Let $E_N\to N$ be a Dirac bundle over a complete {\em even-dimensional} Riemannian manifold $N$. Let $\AA= \{\AAA^s\}_{0\le s\le 1}$ be a family of self-adjoint strongly Callias-type operators 
\[
	\AAA^s=A^s+i\,\Psi^s:\,C^\infty(N,E_N)\ \to\ C^\infty(N,E_N).
\]

\begin{definition}\label{D:almost constant}
The family $\AA= \{\AAA^s\}_{0\le s\le 1}$ is called {\em almost constant} if there exists a compact set $K\subset N$ such that the restriction of $\AAA^s$ to $N\backslash K$ is independent of $s$. 
\end{definition}

Since $\dim N= 2p$ is even, there is a natural grading operator $\Gamma:E_N\to E_N$, with $\Gamma^2=1$, cf. \cite[Lemma~3.17]{BeGeVe}. If $e_1,\ldots, e_{2p}$ is an orthonormal basis of $TN\simeq T^*N$, then 
\[
	\Gamma\ := \ i^p\,c(e_1)\cdots c(e_{2p}).
\]

\begin{remark}\label{R:spectral asymetry}
The operators $A^s$ anticommute with $\Gamma$. Condition (i) of  Definition~\ref{D:sa_stronglyCallias} implies that $\Psi$ anticommutes with $c(e_j)$ ($j=1,\ldots,2p$) and, hence, commutes with $\Gamma$. So the operators $\AAA^s$  neither commute, nor anticommute with $\Gamma$. This explains why, even though the dimension of $N$ is even, the spectrum of the operators $\AAA^s$ is not symmetric about the origin and the spectral flow of the family $\AA$ is, in general, not trivial. 
\end{remark}

We set $M:=[0,1]\times N$, $E:= [0,1]\times E_N$ and denote by $t$ the coordinate along $[0,1]$. Then $E\to M$ is naturally a Dirac bundle over $M$ with $c(dt):= i\Gamma$.

\begin{definition}\label{D:smoothD}
The family $\AA= \{\AAA^s\}_{0\le s\le 1}$ is called {\em smooth} if  
\[
	\DD\ : = \ c(dt)\,\big(\p_t+\AAA^t\,\big)\ :\ 
	C^\infty(M,E)\to C^\infty(M,E)
\]
is a smooth differential operator on $M$. 
\end{definition}

Fix a smooth non-decreasing function $\kappa:[0,1]\to [0,1]$ such that $\kappa(t)=0$ for $t\le1/3$ and $\kappa(t)=1$ for $t\ge2/3$ and consider the operator 
\begin{equation}\label{E:interpolationD}
	\DD\ : = \ c(dt)\,\big(\p_t+\AAA^{\kappa(t)}\,\big)\ :\ 
	C^\infty(M,E)\to C^\infty(M,E).
\end{equation}
Then $\DD$ is product near $\pM$. If $\AA$ is a smooth almost constant family of self-adjoint strongly Callias-type operators then \eqref{E:interpolationD} is a strongly Callias-type operator for which $M$ is an almost compact essential support. Hence it is a non-compact cobordism (cf. Definition~\ref{D:almost compact cobordism}) between $\AAA^0$ and $\AAA^1$. 

\subsection{The spectral section}\label{SS:sp section}

If\/ $\AA= \{\AAA^s\}_{0\le s\le 1}$ is a smooth almost constant family of self-adjoint strongly Callias-type operators then it satisfies the conditions of the Kato Selection Theorem \cite[Theorems~II.5.4 and II.6.8]{Kato95book}, \cite[Theorem~3.2]{Nicolaescu95}. Thus there is a family of eigenvalues $\lambda_j(s)$ ($j\in \ZZ$)  which depend continuously on $s$. We order the eigenvalues so that $\lambda_j(0)\le \lambda_{j+1}(0)$ for all $j\in \ZZ$ and $\lambda_j(0)\le 0$ for $j\le 0$ while $\lambda_j(0)>0$ for $j>0$. 

Atiyah, Patodi and Singer \cite{APS3} defined the spectral flow $\spf(\AA)$ for a family of operators satisfying the conditions of the Kato Selection Theorem  as an integer that counts the net number of eigenvalues that change sign when $s$ changes from 0 to 1. 
Several other equivalent definitions of the spectral flow based on different assumptions on the family $\AA$ exist in the literature. For our purposes the most convenient is the  Dai and Zhang's definition \cite{DaiZhang98} which is based on the notion of {\em spectral section} introduced by Melrose and Piazza \cite{MelrosePiazza97}.

\begin{definition}\label{D:spectral section}
A {\em spectral section} for $\AAA$ is a continuous family $\PP = \{P^s\}_{0\le s\le 1}$ of self-adjoint projections such that there exists a constant $R>0$ such that for all $0\le s\le 1$, if $\AAA^su= \lambda u$ then
\[
	P^su \ = \ \begin{cases} 
		0, \quad &\text{if}\quad \lambda< -R;\\ 
		u, \quad	 &\text{if}\quad \lambda> R.	
		\end{cases}
\]
\end{definition}

If $\AA$ satisfies the conditions of the Kato Selection Theorem, then the arguments of the proof of  \cite[Proposition~1]{MelrosePiazza97} show that $\AA$ admits a spectral section.

\begin{remark}\label{R:B-BLP}
Booss-Bavnbek, Lesch, and Phillips \cite{BoossLeschPhillips05} defined the spectral flow for a family of unbounded operators in an abstract Hilbert space. Their conditions on the family are much weaker than those of the Kato Selection Theorem. In particular, they showed that a family of elliptic differential operators on a closed manifold satisfies their conditions if all the coefficients of the differential operators depend continuously on $s$. It would be interesting to find a good practical condition under which a family of self-adjoint strongly Callias-type operators satisfies the conditions of \cite{BoossLeschPhillips05}.
\end{remark}

\subsection{The spectral flow}\label{SS:sp flow}
Let $\PP = \{P^s\}$ be a spectral section for $\AA$. Set $B^s:= \ker P^s$. Let $B_0^s:= H^{1/2}_{(-\infty,0)}(\AAA^s)$ denote the APS boundary condition defined by the boundary operator $\AAA^s$.  Recall that the relative index of subspaces was defined in Section~\ref{SS:dep on bndry}. Since the spectrum of $\AAA^s$ is discrete, it follows immediately from the definition of the spectral section that for every $s\in[0,1]$ the space $B^s$ is a finite rank perturbation of $B_0^s$. We are interested in the relative index $[B^s,B_0^s]$. Following Dai and Zhang \cite{DaiZhang98} we give the following definition.

\begin{definition}\label{D:sp flow}
Let $\AA= \{\AAA^s\}_{0\le s\le 1}$ be a smooth almost constant  family of self-adjoint strongly Callias-type operators which admits a spectral section $\PP = \{P^s\}_{0\le s\le 1}$. Assume that the operators $\AAA^0$ and $\AAA^1$ are invertible. Let $B^s:= \ker P^s$ and $B_0^s:= H^{1/2}_{(-\infty,0)}(\AAA^s)$. The {\em spectral flow}\/ $\spf(\AA)$ of the family $\AA$ is defined by the formula
\begin{equation}\label{E:sp flow=sp section}
	\spf(\AA)\ := \ [B^1,B_0^1] \ - \ [B^0,B^0_0].
\end{equation}
\end{definition}
By Theorem~1.4 of \cite{DaiZhang98} the spectral flow is independent of the choice of the spectral section $\PP$ and computes the net number of eigenvalues that change sign when $s$ changes from 0 to 1.

\begin{remark}\label{R:rel index DaiZhang}
The relative index $[B^s,B_0^s]$ can also be computed in terms of the orthogonal projections $P^s$ and $P_0^s$ with kernels $B^s$ and $B_0^s$ respectively. Then $P^s_0$ defines a Fredholm operator $P^s_0:\IM P^s\to \IM P_0^s$. Dai and Zhang denote the index of this operator by $[P^s_0-P^s]$ and use it in their formula for spectral flow. One easily checks that $[P^s_0-P^s]= [B^s,B_0^s]$.
\end{remark}

\begin{lemma}\label{L:spflow-A}
Let $-\AA$ denote the family $\{-\AAA^s\}_{0\le s\le 1}$. Then 
\begin{equation}\label{E:spflow-A}
	\spf(-\AA)\  = \ -\spf(\AA).
\end{equation}
\end{lemma}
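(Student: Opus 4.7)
The plan is to produce a spectral section for $-\AA$ from a given spectral section for $\AA$, and then invoke the sign-change behavior of the relative index under orthogonal complements recorded in Lemma~\ref{L:relindex sign}.

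First I would take a spectral section $\PP = \{P^s\}_{0\le s\le 1}$ for $\AA$ with constant $R>0$ and set $P^{\prime s} := \id - P^s$. If $\AAA^s u = \lambda u$, then $(-\AAA^s)u = -\lambda u$, and the defining conditions $P^s u = 0$ for $\lambda < -R$ and $P^s u = u$ for $\lambda > R$ translate, upon setting $\mu := -\lambda$, to $P^{\prime s} u = u$ for $\mu > R$ and $P^{\prime s} u = 0$ for $\mu < -R$. Hence $\{P^{\prime s}\}_{0\le s\le 1}$ is a spectral section for $-\AA$ with the same constant $R$.

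Next, let $B^s := \ker P^s$ and $B^{\prime s} := \ker P^{\prime s}$; since $P^s$ is a self-adjoint projection, $B^{\prime s} = \IM P^s = (B^s)^\perp$, the orthogonal complement being taken in $L^2(N,E_N)$. Denote the APS boundary condition for $-\AAA^s$ by $B_0^{\prime s} := H^{1/2}_{(-\infty,0)}(-\AAA^s)$. Because the eigensections of $-\AAA^s$ with negative eigenvalues are exactly the eigensections of $\AAA^s$ with positive eigenvalues, $B_0^{\prime s} = H^{1/2}_{(0,\infty)}(\AAA^s)$. The invertibility of $\AAA^0$ and $\AAA^1$ built into Definition~\ref{D:sp flow} then ensures that at $s = 0, 1$ the subspace $H^{1/2}_{(0,\infty)}(\AAA^s)$ is precisely the orthogonal complement of $B_0^s = H^{1/2}_{(-\infty,0)}(\AAA^s)$, so $B_0^{\prime s} = (B_0^s)^\perp$.

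Combining these observations, the spectral flow formula \eqref{E:sp flow=sp section} applied to $-\AA$ with the spectral section $\{P^{\prime s}\}$ gives
\[
\spf(-\AA) \ = \ [B^{\prime 1}, B_0^{\prime 1}] \ - \ [B^{\prime 0}, B_0^{\prime 0}] \ = \ [(B^1)^\perp, (B_0^1)^\perp] \ - \ [(B^0)^\perp, (B_0^0)^\perp].
\]
Lemma~\ref{L:relindex sign} supplies $[X_1^\perp, X_2^\perp] = -[X_1, X_2]$, and the two terms combine to yield $\spf(-\AA) = -\spf(\AA)$. The only point requiring care is the bookkeeping between $L^2$-orthogonal complements and the $H^{1/2}$-Sobolev spaces in which the boundary conditions live; but since $\AAA^s$ has discrete spectrum and all relevant finite-rank perturbations happen through finitely many eigenspaces of smooth eigensections, the two perspectives agree and no extra work is needed.
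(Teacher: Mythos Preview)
Your proof is correct and follows exactly the approach the paper intends: the paper's proof is the single sentence ``The lemma is an immediate consequence of Lemma~\ref{L:relindex sign},'' and what you have written is precisely the unpacking of that sentence---constructing the complementary spectral section $P^{\prime s}=\id-P^s$ for $-\AA$, identifying the resulting $B^{\prime s}$ and $B_0^{\prime s}$ as orthogonal complements, and then invoking $[X_1^\perp,X_2^\perp]=-[X_1,X_2]$.
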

\begin{proof}
The lemma is an immediate consequence of Lemma~\ref{L:relindex sign}.
\end{proof}

\subsection{Deformation of the relative $\eta$-invariant}\label{SS:def eta}

Let $\AA= \{\AAA^s\}_{0\le s\le 1}$ be a smooth almost constant family of self-adjoint strongly Callias-type operators on a complete even-dimensional Riemannian manifold $N_1$. Let $\AAA_0$ be another self-adjoint strongly Callias-type operator, which is cobordant to $\AAA^0$. In Section~\ref{SS:familyAA} we showed that  $\AAA^0$ is cobordant to $\AAA^s$ for all $s\in [0,1]$. Hence, by Lemma~\ref{L:transitivity}, $\AAA_0$ is cobordant to $\AAA^1$. In this situation we say the $\AAA_0$ is {\em cobordant to the family} $\AA$. The following theorem is the main result of this  section.

\begin{theorem}\label{T:sp flow}
Suppose 
\(\AA  =  \big\{
	  \AAA^s:\,C^\infty(N_1,E_1)\to C^\infty(N_1,E_1)\big\}_{0\le s\le 1}  
\)
is a smooth almost constant family of self-adjoint strongly Callias-type operators on a complete Riemannian manifold $N_1$ such that $\AAA^0$ and $\AAA^1$ are invertible. Then
\begin{equation}\label{E:etaA0A1=spflow}
	\eta(\AAA^1,\AAA^0)\ = \ 2\,\spf(\AA).
\end{equation}

If $\AAA_0:C^\infty(N_0,E_0)\to C^\infty(N_0,E_0)$ is an invertible self-adjoint strongly Callias-type operator on a complete even-dimensional Riemannian manifold $N_0$ which is cobordant to the family $\AA$ then
\begin{equation}\label{E:sp flow}
	\eta(\AAA^1,\AAA_0)\ - \ \eta(\AAA^0,\AAA_0)\ = \ 2\,\spf(\AA).
\end{equation}
\end{theorem}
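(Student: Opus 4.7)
Plan. The second identity reduces immediately to the first via the cocycle property (Proposition~\ref{P:cocycle}): since $\AAA_0$ is cobordant to each of $\AAA^0$ and $\AAA^1$,
\[
	\eta(\AAA^1,\AAA_0)\ -\ \eta(\AAA^0,\AAA_0)\ =\ \eta(\AAA^1,\AAA^0),
\]
so it suffices to prove the first. For this I would employ the almost compact cobordism $\DD = c(dt)(\partial_t + \AAA^{\kappa(t)})$ on $M=[0,1]\times N_1$ of \eqref{E:interpolationD}. Invertibility of $\AAA^0$ and $\AAA^1$ forces $\dim\ker\AAA^0=\dim\ker\AAA^1=0$, so Definition~\ref{D:releta} collapses to
\[
	\eta(\AAA^1,\AAA^0)\ =\ 2\,\ind\DD_{B_0\oplus B_1},
\]
with $B_0=H^{1/2}_{(-\infty,0)}(\AAA^0)$ at $\{0\}\times N_1$ and $B_1=H^{1/2}_{(0,\infty)}(\AAA^1)$ at $\{1\}\times N_1$. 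The task becomes showing $\ind\DD_{B_0\oplus B_1}=\spf(\AA)$.

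Fix a spectral section $\PP=\{P^s\}$ for $\AA$ and set $B^s:=\ker P^s$ and $C^s:=\IM P^s$. By the defining property of a spectral section, $B^0$ and $B_0^0=B_0$ are mutual finite-rank perturbations inside $H^{1/2}_{\AAA^0}$, and similarly for $C^1$ and $B_1$; both $B^0$ and $C^1$ therefore define elliptic boundary conditions. The change-of-boundary-condition formula (Proposition~\ref{P:change of B}) applied at each boundary component yields
\[
	\ind\DD_{B_0\oplus B_1}\ =\ \ind\DD_{B^0\oplus C^1}\ +\ [B_0,B^0]\ +\ [B_1,C^1].
\]
Self-adjointness of $P^s$ gives $C^s=(B^s)^\perp$, and invertibility of $\AAA^1$ gives $B_1=(B_0^1)^\perp$. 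Lemma~\ref{L:relindex sign} then produces
\[
	[B_0,B^0]\ =\ -[B^0,B_0^0], \qquad [B_1,C^1]\ =\ [(B_0^1)^\perp,(B^1)^\perp]\ =\ [B^1,B_0^1].
\]
Granted the key vanishing $\ind\DD_{B^0\oplus C^1}=0$ below, we conclude
\[
	\ind\DD_{B_0\oplus B_1}\ =\ [B^1,B_0^1]\ -\ [B^0,B_0^0]\ =\ \spf(\AA)
\]
by Definition~\ref{D:sp flow}, which completes the proof.

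The main obstacle is establishing $\ind\DD_{B^0\oplus C^1}=0$, which encodes the invertibility of the cobordism under boundary conditions coming from the spectral section. For a constant family $\AAA^s\equiv\AAA$ equipped with a spectral projection $P$ diagonal in the eigenbasis of $\AAA$ (for example the APS projection), separation of variables for $\partial_t a_j(t)=-\lambda_j a_j(t)$ in the eigenbasis of $\AAA$ shows at once that any kernel element $u=\sum c_j e^{-\lambda_j t}\bfu_j$ must satisfy $c_j=0$ for $\lambda_j>0$ (from $u|_{t=0}\in\ker P$) and $c_j=0$ for $\lambda_j<0$ (from $u|_{t=1}\in\IM P$); the cokernel is handled by an analogous argument for $\DD^*$, yielding $\ind=0$. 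For the general non-constant case I would argue by homotopy invariance: continuously deform the pair $(\AA,\PP)$ through smooth almost constant families with compatible spectral sections to such a constant configuration, keeping the boundary conditions elliptic throughout so that the associated family of Fredholm operators has constant index. The most delicate point, and where the bulk of the technical work lies, is verifying that eigenvalue crossings through zero during the deformation are absorbed by the simultaneous adjustment of the spectral section without changing the Fredholm index; this robustness is precisely what Definition~\ref{D:spectral section} is designed to encode, and combined with Proposition~\ref{P:change of B} it should yield the required invariance in the non-compact boundary setting.
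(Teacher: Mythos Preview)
Your overall architecture is sound and the algebraic manipulation with relative indices is correct: the identities $[B_0,B^0]=-[B^0,B_0^0]$ and $[B_1,C^1]=[B^1,B_0^1]$ are exactly right, and they reduce everything to the vanishing claim $\ind\DD_{B^0\oplus C^1}=0$. Your logical order is the reverse of the paper's (you prove \eqref{E:etaA0A1=spflow} directly on the cylinder and deduce \eqref{E:sp flow} by cocycle; the paper proves \eqref{E:sp flow} first, on an auxiliary cobordism $M'=M\cup_{N_1}([0,1]\times N_1)$, and then specializes), but both orderings are valid.

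The genuine gap is the vanishing step. Your proposal to ``deform the pair $(\AA,\PP)$ to a constant configuration'' is vaguer than it needs to be and introduces an unnecessary difficulty: constructing a two-parameter family of spectral sections. The paper's device is cleaner and is what you should use. Fix the single spectral section $\PP$, introduce a parameter $r\in[0,1]$ and a function $\rho(r,t)$ with $\rho(0,t)\equiv 0$ and $\rho(1,t)=\kappa(t)$, set $\DD^r=c(dt)(\partial_t+\AAA^{\rho(r,t)})$, and impose $B^0$ at $t=0$ and $C^r=\IM P^r$ at $t=1$. Since $\PP$ is a continuous family of projections, for $|r-r_0|$ small the orthogonal projection $C^{r_0}\to C^r$ is an isomorphism, and composing with it turns $\DD^r_{B^0\oplus C^r}$ into a norm-continuous family of Fredholm operators on a fixed domain; this is precisely the content of the paper's Lemma~\ref{L:indDr}. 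Hence the index is constant in $r$. At $r=0$ you land on the product operator with boundary conditions $B^0$ and $C^0$; here your separation-of-variables argument does not apply directly because $P^0$ need not be diagonal in the eigenbasis of $\AAA^0$, but one more application of Proposition~\ref{P:change of B} fixes this: replacing $(B^0,C^0)$ by the APS pair $(B_0^0,(B_0^0)^\perp)$ contributes $[B_0^0,B^0]+[(B_0^0)^\perp,(B^0)^\perp]=0$ by Lemma~\ref{L:relindex sign}, and the APS problem on the product cylinder has index zero by your explicit computation. With this filled in your route is complete; the paper's choice to carry an auxiliary $\AAA_0$ throughout buys nothing essential beyond making the homotopy endpoint index tautologically equal to $\eta(\AAA^0,\AAA_0)/2$ rather than zero.
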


\begin{proof}
First, we prove \eqref{E:sp flow}. Let $M$ be an essentially cylindrical manifold whose boundary is the disjoint union of $N_0$ and $N_1$. Let $\DD:C^\infty(M,E)\to C^\infty(M,E)$ be an almost compact cobordism between $\AAA_0$ and $\AAA^0$. 

Consider the ``extension of $M$ by a cylinder"
\[
	M'\ := \ M\cup_{N_1} \big([0,1]\times N_1\,\big). 
\]
and let $E'\to M'$ be the bundle over $M'$ whose restriction to $M$ is equal to $E$ and whose restriction to the cylinder $[0,1]\times N_1$ is equal to $[0,1]\times E_1$. 

We fix a smooth function $\rho:[0,1]\times [0,1]\to [0,1]$ such that for each $r\in [0,1]$
\begin{itemize}
\item  the function $s\mapsto \rho(r,s)$ is non-decreasing.
\item
$\rho(r,s)=0$ for $s\le1/3$ and $\rho(r,s)= r$ for $s\ge2/3$.
\end{itemize}
Consider the family of strongly Callias-type operators $\DD^r:C^\infty(M',E')\to C^\infty(M',E')$ whose restriction to $M$ is equal to $\DD$ and whose restriction to $[0,1]\times N_1$ is given by 
\[
	\DD^r\ := \ c(dt)\,\big(\p_t+\AAA^{\rho(r,t)}\,\big).
\]
Then $\DD^r$ is an almost compact cobordism between $\AAA_0$ and $\AAA^r$. In particular, the restriction of $\DD^r$ to $N_1$ is equal to $-\AAA^r$. 

Recall that we denote by $-\AA$ the family $\{-\AAA^s\}_{0\le s\le 1}$.
Let $\PP= \{P^s\}$ be a spectral section for $-\AA$. Then for each $r\in [0,1]$ the space  $B^r:= \ker P^r$ is an elliptic boundary condition for $\DD^r$ at $\{1\}\times N_1$. Let $B_0:= H^{1/2}_{(-\infty,0)}(\AAA_0)$ be the APS boundary condition for $\DD^r$ at $N_0$. Then $B_0\oplus B^r$ is an elliptic boundary condition for $\DD^r$. 

Recall that the domain $\dom \DD^r_{B_0\oplus B^r}$ consists of sections $u$ whose restriction to $\pM'=N_0\sqcup N_1$ lies in $B_0\oplus B^r$.

\begin{lemma}\label{L:indDr}
$\ind \DD^r_{B_0\oplus B^r}= \ind \DD^1_{B_0\oplus B^1}$ for all $r\in [0,1]$. 
\end{lemma}
\begin{proof}
For $r_0,r\in [0,1]$, let $\pi_{r_0r}:\,B^{r_0} \to B^r$
denote the orthogonal projection. Then for every $r_0\in[0,1]$ there exists $\varepsilon>0$ such that if $|r-r_0|<\varepsilon$ then $\pi_{r_0r}$ is an isomorphism. As in the proof of Theorem \ref{T:splitting}, it induces an isomorphism
\[
	\Pi_{r_0r}\,:\,\dom\DD^{r_0}_{B_0\oplus B^{r_0}} \ \to \ 
		\dom\DD^{r}_{B_0\oplus B^{r}}.
\]
Hence
\begin{equation}\label{E:DcircPi}
		\ind \big(\DD^{r}_{B_0\oplus B^{r}}\circ \Pi_{r_0r}\big) 
		\ = \ \ind \DD^{r}_{B_0\oplus B^{r}}.
\end{equation}
Since for $|r-r_0|<\varepsilon$
\[
	\DD^{r}_{B_0\oplus B^{r}}\circ \Pi_{r_0r}:\, \dom\DD^{r_0}_{B_0\oplus B^{r_0}}
	\ \to \ L^2(M',E')
\]
is a continuous family of bounded operators, $\ind (\DD^{r}_{B_0\oplus B^{r}}\circ \Pi_{r_0r})$ is independent of $r$. The lemma follows now from \eqref{E:DcircPi}.
\end{proof}

The space $B^r_0:= H^{1/2}_{(-\infty,0)}(-\AAA^r)$ is the APS boundary conditions for $\DD^r$ at $\{1\}\times N_1$. By definition, $\eta(\AAA^1,\AAA_0)= 2\,\ind \DD^{1}_{B_0\oplus B^1_0}$. To finish the proof of Theorem~\ref{T:sp flow} we note that by Proposition~\ref{P:change of B}
\[
	\ind \DD^{r}_{B_0\oplus B^r} \  = \ \ind \DD^{r}_{B_0\oplus B^r_0} 
	  \ + \ [B^r,B^r_0].
\]
Hence, 
\begin{multline}\notag
	\frac{\eta(\AAA^1,\AAA_0)- \eta(\AAA^0,\AAA_0)\,\big)}2\ = \ 
	\ind \DD^1_{B_0\oplus B^1_0}\ - \ \ind \DD^0_{B_0\oplus B^0_0}
	\\ = \ 
	\left(\,\ind \DD^1_{B_0\oplus B^1}- [B^1,B^1_0]\,\right) \ -\ 
	\left(\, \ind \DD^0_{B_0\oplus B^0}- [B^0,B^0_0]\,\right)
	\\ \overset{\text{Lemma~\ref{L:indDr} }}{=} \
	-[B^1,B^1_0]\ + \ [B^0,B^0_0] \ = \ -\spf(-\AA) 
	\ \overset{\text{Lemma~\ref{L:spflow-A}}}= \ \spf(\AA).
\end{multline}
This proves \eqref{E:sp flow}. Now, by Propostion~\ref{P:cocycle}, 
\[
	\eta(\AAA^1,\AAA^0)\ = \ \eta(\AAA^1,\AAA_0)\ - \ \eta(\AAA^0,\AAA_0)
	 \ = \ 2\,\spf(\AA).
\]
\end{proof}

\begin{bibdiv}
\begin{biblist}

\bib{Anghel90}{article}{
      author={Anghel, N.},
       title={{$L^2$}-index formulae for perturbed {D}irac operators},
        date={1990},
        ISSN={0010-3616},
     journal={Comm. Math. Phys.},
      volume={128},
      number={1},
       pages={77\ndash 97},
         url={http://projecteuclid.org/euclid.cmp/1104180304},
}

\bib{Anghel93}{article}{
      author={Anghel, N.},
       title={An abstract index theorem on noncompact {R}iemannian manifolds},
        date={1993},
        ISSN={0362-1588},
     journal={Houston J. Math.},
      volume={19},
      number={2},
       pages={223\ndash 237},
}

\bib{Anghel93Callias}{article}{
      author={Anghel, N.},
       title={On the index of {C}allias-type operators},
        date={1993},
        ISSN={1016-443X},
     journal={Geom. Funct. Anal.},
      volume={3},
      number={5},
       pages={431\ndash 438},
         url={http://dx.doi.org/10.1007/BF01896237},
}

\bib{APS1}{article}{
      author={Atiyah, M.~F.},
      author={Patodi, V.~K.},
      author={Singer, I.~M.},
       title={Spectral asymmetry and {R}iemannian geometry. {I}},
        date={1975},
     journal={Math. Proc. Cambridge Philos. Soc.},
      volume={77},
      number={1},
       pages={43\ndash 69},
}

\bib{APS3}{article}{
      author={Atiyah, M.~F.},
      author={Patodi, V.~K.},
      author={Singer, I.~M.},
       title={Spectral asymmetry and {R}iemannian geometry. {III}},
        date={1976},
     journal={Math. Proc. Cambridge Philos. Soc.},
      volume={79},
      number={1},
       pages={71\ndash 99},
}

\bib{BaerBallmann12}{incollection}{
      author={B\"ar, C.},
      author={Ballmann, W.},
       title={Boundary value problems for elliptic differential operators of
  first order},
        date={2012},
   booktitle={Surveys in differential geometry. {V}ol. {XVII}},
      series={Surv. Differ. Geom.},
      volume={17},
   publisher={Int. Press, Boston, MA},
       pages={1\ndash 78},
         url={http://dx.doi.org/10.4310/SDG.2012.v17.n1.a1},
}

\bib{BaerBallmann13}{incollection}{
	Author = {B\"{a}r, C.},
	Author = {Ballmann, W.},
	Booktitle = {Arbeitstagung {B}onn 2013},
	Date-Added = {2019-11-24 12:46:38 -0500},
	Date-Modified = {2019-11-24 12:47:28 -0500},
	Mrclass = {58J05 (30G35 35J56 35Q41 58J20 58J32)},
	Mrnumber = {3618047},
	Mrreviewer = {Yuri A. Kordyukov},
	Pages = {43--80},
	Publisher = {Birkh\"{a}user/Springer, Cham},
	Series = {Progr. Math.},
	Title = {Guide to elliptic boundary value problems for {D}irac-type operators},
	Volume = {319},
	Year = {2016},
	Bdsk-Url-1 = {http://arxiv.org/abs/1307.3021}}

\bib{BarStrohmaier15}{article}{
      author={{B\"ar}, C.},
      author={{Strohmaier}, A.},
       title={{An index theorem for Lorentzian manifolds with compact spacelike
  Cauchy boundary}},
        date={2015-06},
     journal={arXiv e-prints: 1506.00959},
      eprint={1506.00959},
}

\bib{BarStrohmaier16}{article}{
      author={B\"ar, C.},
      author={Strohmaier, A.},
       title={A rigorous geometric derivation of the chiral anomaly in curved
  backgrounds},
        date={2016},
        ISSN={0010-3616},
     journal={Comm. Math. Phys.},
      volume={347},
      number={3},
       pages={703\ndash 721},
         url={https://doi.org/10.1007/s00220-016-2664-1},
      review={\MR{3551253}},
}

\bib{BeGeVe}{book}{
      author={Berline, N.},
      author={Getzler, E.},
      author={Vergne, M.},
       title={Heat kernels and {Dirac} operators},
   publisher={Springer-Verlag},
        date={1992},
}

\bib{Bisgard12}{article}{
      author={Bisgard, J.},
       title={A compact embedding for sequence spaces},
        date={2012},
        ISSN={0899-6180},
     journal={Missouri J. Math. Sci.},
      volume={24},
      number={2},
       pages={182\ndash 189},
}

\bib{BoossLeschPhillips05}{article}{
      author={Booss-Bavnbek, B.},
      author={Lesch, M.},
      author={Phillips, J.},
       title={Unbounded {F}redholm operators and spectral flow},
        date={2005},
        ISSN={0008-414X},
     journal={Canad. J. Math.},
      volume={57},
      number={2},
       pages={225\ndash 250},
         url={http://dx.doi.org/10.4153/CJM-2005-010-1},
}

\bib{BoosWoj93book}{book}{
      author={Boo{\ss}-Bavnbek, B.},
      author={Wojciechowski, K.~P.},
       title={Elliptic boundary problems for {D}irac operators},
      series={Mathematics: Theory \& Applications},
   publisher={Birkh\"auser Boston, Inc., Boston, MA},
        date={1993},
        ISBN={0-8176-3681-1},
         url={http://dx.doi.org/10.1007/978-1-4612-0337-7},
}

\bib{BottSeeley78}{article}{
      author={Bott, R.},
      author={Seeley, R.},
       title={Some remarks on the paper of {C}allias: ``{A}xial anomalies and
  index theorems on open spaces'' },
        date={1978},
        ISSN={0010-3616},
     journal={Comm. Math. Phys.},
      volume={62},
      number={3},
       pages={235\ndash 245},
         url={http://projecteuclid.org/euclid.cmp/1103904396},
}

\bib{Br-index}{article}{
      author={Braverman, M.},
       title={Index theorem for equivariant {D}irac operators on noncompact
  manifolds},
        date={2002},
     journal={$K$-Theory},
      volume={27},
      number={1},
       pages={61\ndash 101},
}

\bib{Br19Lorentz}{article}{
      author={Braverman, M.},
       title={An index of strongly {C}allias operators on {L}orentzian
  manifolds with non-compact boundary},
        date={2019},
        ISSN={1432-1823},
     journal={Mathematische Zeitschrift. Prepublished online
  \texttt{https://link.springer.com/article/10.1007/s00209-019-02270-4}},
         url={https://doi.org/10.1007/s00209-019-02270-4},
}

\bib{BrCecchini17}{article}{
      author={Braverman, M.},
      author={Cecchini, S.},
       title={Callias-type operators in von neumann algebras},
        date={2017},
        ISSN={1559-002X},
     journal={The Journal of Geometric Analysis},
       pages={1\ndash 41},
         url={http://dx.doi.org/10.1007/s12220-017-9832-1},
}

\bib{BrMaschler19}{article}{
      author={Braverman, M.},
      author={Maschler, G.},
       title={Equivariant {A}{P}{S} index for {D}irac operators of non-product
  type near the boundary},
        date={2019},
        ISSN={0022-2518},
     journal={Indiana Univ. Math. J.},
      volume={68},
       pages={435\ndash 501},
}

\bib{BrMiSh02}{article}{
      author={Braverman, M.},
      author={Milatovich, O.},
      author={Shubin, M.},
       title={Essential selfadjointness of {S}chr\"odinger-type operators on
  manifolds},
        date={2002},
     journal={Russian Math. Surveys},
      volume={57},
       pages={41\ndash 692},
}

\bib{BrShi16}{article}{
      author={Braverman, M.},
      author={Shi, P.},
       title={Cobordism invariance of the index of {C}allias-type operators},
        date={2016},
        ISSN={0360-5302},
     journal={Comm. Partial Differential Equations},
      volume={41},
      number={8},
       pages={1183\ndash 1203},
         url={http://dx.doi.org/10.1080/03605302.2016.1183214},
}

\bib{BrShi17b}{article}{
      author={Braverman, M.},
      author={Shi, P.},
       title={{A}{P}{S} index theorem for even-dimensional manifolds with
  non-compact boundary},
     journal={to appear in {\em Communications in Analysis
  and Geometry}},
      eprint={1708.08336},
         url={https://arxiv.org/abs/1708.08336},
}

\bib{BrShi19local}{article}{
      author={Braverman, M.},
      author={Shi, P.},
       title={The index of a local boundary value problem for strongly
  {C}allias-type operators},
        date={2019May},
        ISSN={2199-6806},
     journal={Arnold Mathematical Journal},
      volume={5},
      number={1},
       pages={79\ndash 96},
         url={https://doi.org/10.1007/s40598-019-00110-1},
}

\bib{BruningMoscovici}{article}{
      author={Br{\"u}ning, J.},
      author={Moscovici, H.},
       title={{$L^2$}-index for certain {D}irac-{S}chr\"odinger operators},
        date={1992},
        ISSN={0012-7094},
     journal={Duke Math. J.},
      volume={66},
      number={2},
       pages={311\ndash 336},
         url={http://dx.doi.org/10.1215/S0012-7094-92-06609-9},
      review={\MR{1162192 (93g:58142)}},
}

\bib{Bunke92}{article}{
      author={Bunke, U.},
       title={Relative index theory},
        date={1992},
        ISSN={0022-1236},
     journal={J. Funct. Anal.},
      volume={105},
      number={1},
       pages={63\ndash 76},
         url={http://dx.doi.org/10.1016/0022-1236(92)90072-Q},
}

\bib{Bunke95}{article}{
      author={Bunke, U.},
       title={A {$K$}-theoretic relative index theorem and {C}allias-type
  {D}irac operators},
        date={1995},
        ISSN={0025-5831},
     journal={Math. Ann.},
      volume={303},
      number={2},
       pages={241\ndash 279},
         url={http://dx.doi.org/10.1007/BF01460989},
      review={\MR{1348799 (96e:58148)}},
}

\bib{Callias78}{article}{
      author={Callias, C.},
       title={Axial anomalies and index theorems on open spaces},
        date={1978},
        ISSN={0010-3616},
     journal={Comm. Math. Phys.},
      volume={62},
      number={3},
       pages={213\ndash 235},
         url={http://projecteuclid.org/euclid.cmp/1103904395},
}

\bib{CarvalhoNistor14}{article}{
      author={Carvalho, C.},
      author={Nistor, V.},
       title={An index formula for perturbed {D}irac operators on {L}ie
  manifolds},
    language={English},
        date={2014},
        ISSN={1050-6926},
     journal={The Journal of Geometric Analysis},
      volume={24},
      number={4},
       pages={1808\ndash 1843},
         url={http://dx.doi.org/10.1007/s12220-013-9396-7},
}

\bib{DaiZhang98}{article}{
      author={Dai, X.},
      author={Zhang, W.},
       title={Higher spectral flow},
        date={1998},
        ISSN={0022-1236},
     journal={J. Funct. Anal.},
      volume={157},
      number={2},
       pages={432\ndash 469},
         url={http://dx.doi.org/10.1006/jfan.1998.3273},
      review={\MR{1638328}},
}

\bib{FoxHaskell03}{article}{
      author={Fox, J.},
      author={Haskell, P.},
       title={Heat kernels for perturbed {D}irac operators on even-dimensional
  manifolds with bounded geometry},
        date={2003},
        ISSN={0129-167X},
     journal={Internat. J. Math.},
      volume={14},
      number={1},
       pages={69\ndash 104},
         url={http://dx.doi.org/10.1142/S0129167X03001648},
      review={\MR{1955511}},
}

\bib{FoxHaskell05}{article}{
      author={Fox, J.},
      author={Haskell, P.},
       title={The {A}tiyah-{P}atodi-{S}inger theorem for perturbed {D}irac
  operators on even-dimensional manifolds with bounded geometry},
        date={2005},
        ISSN={1076-9803},
     journal={New York J. Math.},
      volume={11},
       pages={303\ndash 332},
         url={http://nyjm.albany.edu:8000/j/2005/11_303.html},
      review={\MR{2154358}},
}

\bib{Freed98}{article}{
      author={Freed, D.},
       title={Two index theorems in odd dimensions},
        date={1998},
        ISSN={1019-8385},
     journal={Comm. Anal. Geom.},
      volume={6},
      number={2},
       pages={317\ndash 329},
         url={https://doi-org.ezproxy.neu.edu/10.4310/CAG.1998.v6.n2.a4},
      review={\MR{1651419}},
}

\bib{GromovLawson83}{article}{
      author={Gromov, M.},
      author={Lawson, H.~B., Jr.},
       title={Positive scalar curvature and the {D}irac operator on complete
  {R}iemannian manifolds},
        date={1983},
        ISSN={0073-8301},
     journal={Inst. Hautes \'Etudes Sci. Publ. Math.},
      number={58},
       pages={83\ndash 196 (1984)},
         url={http://www.numdam.org/item?id=PMIHES_1983__58__83_0},
}

\bib{Hebey99book}{book}{
      author={Hebey, E.},
       title={Nonlinear analysis on manifolds: {S}obolev spaces and
  inequalities},
      series={Courant Lecture Notes in Mathematics},
   publisher={New York University, Courant Institute of Mathematical Sciences,
  New York; American Mathematical Society, Providence, RI},
        date={1999},
      volume={5},
        ISBN={0-9658703-4-0; 0-8218-2700-6},
      review={\MR{1688256}},
}

\bib{HoravaWitten96}{article}{
      author={Horava, P.},
      author={Witten, E.},
       title={Heterotic and type {I} string dynamics from eleven dimensions},
        date={1996},
        ISSN={0550-3213},
     journal={Nuclear Phys. B},
      volume={460},
      number={3},
       pages={506\ndash 524},
         url={https://doi-org.ezproxy.neu.edu/10.1016/0550-3213(95)00621-4},
      review={\MR{1381609}},
}

\bib{Hormander}{book}{
      author={H\"ormander, L.},
       title={The analysis of linear partial differential operators. {III}},
      series={Classics in Mathematics},
   publisher={Springer, Berlin},
        date={2007},
        ISBN={978-3-540-49937-4},
         url={http://dx.doi.org/10.1007/978-3-540-49938-1},
        note={Pseudo-differential operators, Reprint of the 1994 edition},
      review={\MR{2304165}},
}

\bib{Kato95book}{book}{
      author={Kato, T.},
       title={Perturbation theory for linear operators},
   publisher={Springer},
        date={1995},
  url={http://gen.lib.rus.ec/book/index.php?md5=F433149A2A8FBF8E7274E2E6AB6E537E},
}

\bib{Kottke11}{article}{
      author={Kottke, C.},
       title={An index theorem of {C}allias type for pseudodifferential
  operators},
        date={2011},
        ISSN={1865-2433},
     journal={J. K-Theory},
      volume={8},
      number={3},
       pages={387\ndash 417},
         url={http://dx.doi.org/10.1017/is010011014jkt132},
      review={\MR{2863418}},
}

\bib{Kottke15}{article}{
      author={Kottke, C.},
       title={A {C}allias-type index theorem with degenerate potentials},
        date={2015},
        ISSN={0360-5302},
     journal={Comm. Partial Differential Equations},
      volume={40},
      number={2},
       pages={219\ndash 264},
         url={http://dx.doi.org/10.1080/03605302.2014.942740},
      review={\MR{3277926}},
}

\bib{LawMic89}{book}{
      author={Lawson, H.~B.},
      author={Michelsohn, M.-L.},
       title={Spin geometry},
   publisher={Princeton University Press},
     address={Princeton, New Jersey},
        date={1989},
}

\bib{MelrosePiazza97}{article}{
      author={Melrose, R.~B.},
      author={Piazza, P.},
       title={Families of {D}irac operators, boundaries and the
  {$b$}-calculus},
        date={1997},
        ISSN={0022-040X},
     journal={J. Differential Geom.},
      volume={46},
      number={1},
       pages={99\ndash 180},
         url={http://projecteuclid.org/euclid.jdg/1214459899},
      review={\MR{1472895}},
}

\bib{Muller98}{article}{
      author={M\"uller, W.},
       title={Relative zeta functions, relative determinants and scattering
  theory},
        date={1998},
        ISSN={0010-3616},
     journal={Comm. Math. Phys.},
      volume={192},
      number={2},
       pages={309\ndash 347},
         url={http://dx.doi.org/10.1007/s002200050301},
}

\bib{Nicolaescu95}{article}{
      author={Nicolaescu, L.~I.},
       title={The {M}aslov index, the spectral flow, and decompositions of
  manifolds},
        date={1995},
        ISSN={0012-7094},
     journal={Duke Math. J.},
      volume={80},
      number={2},
       pages={485\ndash 533},
         url={http://dx.doi.org/10.1215/S0012-7094-95-08018-1},
      review={\MR{1369400}},
}

\bib{ReSi1}{book}{
      author={Reed, M.},
      author={Simon, B.},
       title={Methods of modern mathematical physics {I}},
   publisher={Academic Press},
     address={London},
        date={1978},
}

\bib{Shi17}{article}{
      author={Shi, P.},
       title={The index of {C}allias-type operators with {A}tiyah--{P}atodi--{S}inger boundary conditions},
        date={2017},
     journal={Ann. Glob. Anal. Geom.},
      volume={52},
      number={4},
       pages={465\ndash 482},
	     url={http://dx.doi.org/10.1007/s10455-017-9575-z},
      review={\MR{3735908}},
}

\bib{Shi18}{article}{
      author={Shi, P.},
       title={Cauchy data spaces and {A}tiyah-{P}atodi-{S}inger index on
  non-compact manifolds},
        date={201803},
     journal={J. Geom. Phys.},
      volume={133},
       pages={81\ndash 90},
      eprint={1803.01884},
         url={https://arxiv.org/pdf/1803.01884},
}

\bib{Shubin99}{incollection}{
      author={Shubin, M.~A.},
       title={Spectral theory of the {S}chr\"odinger operators on non-compact
  manifolds: qualitative results},
        date={1999},
   booktitle={Spectral theory and geometry ({E}dinburgh, 1998)},
      series={London Math. Soc. Lecture Note Ser.},
      volume={273},
   publisher={Cambridge Univ. Press, Cambridge},
       pages={226\ndash 283},
         url={http://dx.doi.org/10.1017/CBO9780511566165.009},
}

\bib{Wimmer14}{article}{
      author={Wimmer, R.},
       title={An index for confined monopoles},
        date={2014},
        ISSN={0010-3616},
     journal={Comm. Math. Phys.},
      volume={327},
      number={1},
       pages={117\ndash 149},
         url={http://dx.doi.org/10.1007/s00220-014-1934-z},
      review={\MR{3177934}},
}

\end{biblist}
\end{bibdiv}

\end{document}